\newcommand*{\letitre}{Algèbres différentielles de formes quasi-Jacobi d'indice nul}
\newcommand*{\lesmotscles}{Formes elliptiques, formes de Jacobi, déformations formelles, crochets de Rankin-Cohen}
\newcommand*{\lescodesMSC}{Primaire : 11F50, 16S80 Secondaire : 11F11, 11F25, 16W25, 53D55}
\newcommand*{\lesujet}{Math. Subj. Class~\lescodesMSC}
\newcommand{\leresume}{La notion de double profondeur attachée aux formes quasi-Jacobi permet de distinguer dans l'algèbre \(\AlgQJS\) des formes singulières quasi-Jacobi d'indice nul certaines sous-algèbres significatives (formes de type modulaire, formes de type elliptique, formes de Jacobi). On étudie la stabilité de ces sous-algèbres par les dérivations de \(\AlgQJS\) et par certaines suites d'opérateurs bidifférentiels constituant des analogues de crochets de Rankin-Cohen ou de transvectants.}
\newcommand{\leresumeanglais}{The notion of double depth associated with quasi-Jacobi forms allows distinguishing, within the algebra \(\AlgQJS\) of quasi-Jacobi singular forms of index zero, certain significant subalgebras (modular-type forms, elliptic-type forms, Jacobi forms). We study the stability of these subalgebras under the derivations of \(\AlgQJS\) and through certain sequences of bidifferential operators constituting analogs of Rankin-Cohen brackets or transvectants.
\begin{center}
\textbf{An english translation of this text is available at~\url{https://hal.science/hal-04735930}}. 
\end{center}}
\newcommand*\texten[1]{\foreignlanguage{english}{\textit{#1}}}
 \def\@@and{et}
 \def\author@andify{%
	\nxandlist {\unskip ,\penalty-1 \space\ignorespaces}%
    	{\unskip {} \@@and~}%
    	{\unskip \penalty-2 \space \@@and~}%
 }
 \newlength{\h@uteurnumerateur}
 \newlength{\h@uteurdenominateur}
\newcommand*{\qd}[2]{
  \mathchoice%
  {
    \settoheight{\h@uteurnumerateur}{\ensuremath{\displaystyle{#1#2}}}%
    \settoheight{\h@uteurdenominateur}{\ensuremath{\displaystyle{#1#2}}}%
    \raisebox{0.5\h@uteurnumerateur}{\ensuremath{\displaystyle{#1}}}%
    \mkern-5mu\diagup\mkern-4mu%
    \raisebox{-0.5\h@uteurdenominateur}{\ensuremath{\displaystyle{#2}}}%
  }
  {
    \settoheight{\h@uteurnumerateur}{\ensuremath{\textstyle{#1#2}}}%
    \settoheight{\h@uteurdenominateur}{\ensuremath{\textstyle{#1#2}}}%
  \raisebox{0.2\h@uteurnumerateur}{\ensuremath{\textstyle{#1}}}%
  /%
  \raisebox{-0.2\h@uteurdenominateur}{\ensuremath{\textstyle{#2}}}%
  }
  {
    \settoheight{\h@uteurnumerateur}{\ensuremath{\scriptstyle{#1#2}}}%
    \settoheight{\h@uteurdenominateur}{\ensuremath{\scriptstyle{#1#2}}}%
  \raisebox{0.2\h@uteurnumerateur}{\ensuremath{\scriptstyle{#1}}}%
  /%
  \raisebox{-0.2\h@uteurdenominateur}{\ensuremath{\scriptstyle{#2}}}%
  }
  {
    \settoheight{\h@uteurnumerateur}{\ensuremath{\scriptscriptstyle{#1#2}}}%
    \settoheight{\h@uteurdenominateur}{\ensuremath{\scriptscriptstyle{#1#2}}}%
  \raisebox{0.2\h@uteurnumerateur}{\ensuremath{\scriptscriptstyle{#1}}}%
  /%
  \raisebox{-0.2\h@uteurdenominateur}{\ensuremath{\scriptscriptstyle{#2}}}%
  }
}
\newcommand*{\AjoutRouge}[1]{%
#1
}
\newcommand*{\Ajout}[1]{%
#1
}
\def\mathcolor#1#{\@mathcolor{#1}}
\def\@mathcolor#1#2#3{%
  \protect\leavevmode
  \begingroup
    \color#1{#2}#3%
  \endgroup
}
\newcommand*{\Ajoutmath}[1]{\mathcolor{magenta}{#1}}
\DeclarePairedDelimiter{\abs}{\lvert}{\rvert}
\DeclarePairedDelimiterXPP\accol[3]{}\{\}{_{#3}}{
\DeclareMathOperator{\AC}{H}
\newcommand*{\AlgJS}{\mathrm{JS}}
\newcommand*{\AlgM}{\mathrm{M}}
\newcommand*{\AlgQJS}{\mathrm{JS}^{\infty}}
\newcommand*{\AlgQJell}{\mathrm{JS}^{0,\infty}}
\newcommand*{\AlgQJmod}{\mathrm{JS}^{\infty,0}}
\newcommand*{\AlgQM}{\mathrm{M}^{\infty}}
\newcommand*{\CC}{\mathbb{C}}
\DeclareMathOperator{\CM}{CM}
\DeclareMathOperator{\cQ}{Q}
\DeclarePairedDelimiterXPP\Crochet[3]{}\llbracket\rrbracket{_{#3}}{
	\ifblank{#1}{\phantom{f}}{#1},
	\ifblank{#2}{\phantom{f}}{#2}
	}
\DeclarePairedDelimiterXPP\crochet[3]{}\lbrack\rbrack{_{#3}}{
	\ifblank{#1}{\phantom{f}}{#1},
	\ifblank{#2}{\phantom{f}}{#2}
}
\DeclarePairedDelimiter{\projZ}{\lVert}{\rVert}
\DeclareMathOperator{\derpoidsuncolored}{\delta}
\newcommand*{\derpoids}{\Ajoutmath{\derpoidsuncolored}}
\newcommand*{\dtau}{\mathop{\partial}\nolimits_{\tau}}
\newcommand*{\dz}{\mathop{\partial}\nolimits_{z}}
\DeclareMathOperator{\fe}{e}
\DeclareMathOperator{\eisen}{e}
\DeclareMathOperator{\Eisen}{E}
\DeclarePairedDelimiter{\Ent}{\lfloor}{\rfloor}
\DeclareMathOperator{\fd}{d}
\newcommand*{\fds}{\fd_{\mathrm{S}}}
\newcommand*{\fdmod}{\fd^{\infty,0}_{\mathrm{S}}}
\newcommand*{\fdell}{\fd^{0,\infty}_{\mathrm{S}}}
\newcommand*{\fdq}{\fd^{\infty}_{\mathrm{S}}}
\newcommand*{\fonc}{{\CC^\pkC}}
\DeclareMathOperator{\ia}{I}
\newcommand*{\ic}{\mathrm{i}}
\DeclareMathOperator{\J}{J}
\newcommand*{\Jac}{{\sldz\ltimes\Z^2}}
\newcommand*{\jc}{\mathrm{j}}
\newcommand*{\JS}[1]{\mathrm{JS}_{#1}}
\newcommand*{\JSell}[1]{\mathrm{JS}^{0,\infty}_{#1}}
\newcommand*{\JSmod}[1]{\mathrm{JS}^{\infty,0}_{#1}}
\newcommand*{\N}{\mathbb{N}}
\newcommand{\Ob}{\mathop{\mathrm{Ob}}\nolimits^*}
\DeclareMathOperator{\ObCDMR}{\mathrm{Ob}}
\DeclarePairedDelimiterXPP\parent[3]{}\lparen\rparen{_{#3}}{
	\ifblank{#1}{\phantom{f}}{#1},
	\ifblank{#2}{\phantom{f}}{#2}
	}
\DeclareMathOperator{\olddeltauncolored}{\eta}
\newcommand*{\olddelta}{\Ajoutmath{\olddeltauncolored}}
\DeclareMathOperator{\pa}{P}
\newcommand*{\pk}{\mathcal{H}}
\newcommand*{\pkC}{{\pk\times\CC}}
\newcommand*{\planck}{\hbar}
\newcommand*{\QJS}[3]{\mathrm{JS}_{#1}^{\leq#2,#3}}
\newcommand*{\QJSpoids}[1]{\mathrm{JS}_{#1}^{\infty}}
\newcommand{\Reseau}{\mathcal{R}}
\newcommand*{\Sing}{\mathcal{S}}
\newcommand*{\SL}{\mathrm{SL}}
\newcommand*{\sldz}{\SL(2,\Z)}
\DeclareMathOperator{\wpdp}{W}
\DeclareMathOperator{\X}{X}
\DeclareMathOperator{\Y}{Y}
\newcommand*{\Z}{\mathbb{Z}}
\def\l@paragraph{\@tocline{4}{0pt}{5pc}{7pc}{}}
\gdef\csname r@tocindent4\endcsname{0pt}
\def\paragraph{\@startsection{paragraph}{4}%
  \z@{.5\linespacing\@plus.7\linespacing}{-.5em}%
  {\normalfont\itshape}}
\theoremstyle{plain}
\newtheorem{thm}{Théorème}
\newtheorem{lem}[thm]{Lemme}
\newtheorem{prop}[thm]{Proposition}
\newtheorem{cor}[thm]{Corollaire}
\theoremstyle{definition}
\newtheorem{dfn}[thm]{Définition}
\theoremstyle{remark}
\newtheorem{rem}[thm]{Remarque}
\newcommand\tikzmark[2]{%
\tikz[remember picture,baseline] \node[inner sep=2pt,outer sep=0] (#1){#2};%
}
\newcommand\link[2]{%
\begin{tikzpicture}[remember picture, overlay, >=stealth, shift={(0,0)}]
  \draw[->] (#1) to (#2);
\end{tikzpicture}%
}
\newcommand*{\rose}[1]{{#1}}
\newcommand*{\bleu}[1]{{#1}}
\newcounter{cellindex}
\newcolumntype{C}{>{\stepcounter{cellindex}%
\begin{tikzpicture}[remember picture,baseline=(Cell-\thecellindex-left.base),inner sep=0pt]
\node (Cell-\thecellindex-left){\strut};
\end{tikzpicture}
}c<{\begin{tikzpicture}[remember picture,baseline=(Cell-\thecellindex-right.base),inner sep=0pt]
\node (Cell-\thecellindex-right){\strut};
\end{tikzpicture}}} 
\newcommand{\MeasureLastTable}[1]{
\pgfmathtruncatemacro{\LeftCellIndex}{\thecellindex-1}
\pgfmathtruncatemacro{\AboveCellIndex}{\thecellindex-#1}
\begin{tikzpicture}[overlay,remember picture]
    \path let \p0 = (Cell-\thecellindex-right.east), 
    \p1 = (Cell-\thecellindex-left.west),
    \p2 = (Cell-\LeftCellIndex-right.east),
    \p3 = (Cell-\LeftCellIndex-left.west) in 
    \pgfextra{\pgfmathsetmacro{\tmp}{(\x0+\x1-\x2-\x3)/4}
    \xdef\HorSep{\tmp pt} 
    };
    \path let \p0 = (Cell-\thecellindex-right.north west), 
    \p1 = (Cell-\thecellindex-left.south west),
    \p2 = (Cell-\AboveCellIndex-right.north west),
    \p3 = (Cell-\AboveCellIndex-left.south west) in 
    \pgfextra{\pgfmathsetmacro{\tmp}{(\y2+\y3-\y0-\y1)/4}
    \xdef\VertSep{\tmp pt} 
    }; 
\end{tikzpicture}%
}
\newcommand{\CrossOut}[2][]{
\tikz[overlay,remember picture]{
\path let \p0 = (Cell-#2-left.west), 
    \p1 = (Cell-#2-right.east),
    \p2 = (Cell-#2-left.north west),
    \p3 = (Cell-#2-left.south west) in
    \pgfextra{\pgfmathsetmacro{\tmp}{\HorSep-(\x1-\x0)/2}
    \xdef\myxoffset{\tmp pt}
    \pgfmathsetmacro{\tmp}{\VertSep-(\y2-\y3)/2}
    \xdef\myyoffset{\tmp pt}
    };
\draw[thick,#1] ([xshift=-\myxoffset,yshift=-\myyoffset]Cell-#2-left.south west) -- 
([xshift=\myxoffset,yshift=\myyoffset]Cell-#2-right.north east);
}}
\newenvironment{abstracts}{%
  \ifx\maketitle\relax
    \ClassWarning{\@classname}{Abstract should precede
      \protect\maketitle\space in AMS document classes; reported}%
  \fi
  \global\setbox\abstractbox=\vtop \bgroup
    \normalfont\Small
    \list{}{\labelwidth\z@
      \leftmargin3pc \rightmargin\leftmargin
      \listparindent\normalparindent \itemindent\z@
      \parsep\z@ \@plus\p@
      
      \itemsep\medskipamount
    }%
}{%
  \endlist\egroup
  \ifx\@setabstract\relax \@setabstracta \fi
}
\newcommand{\abstractin}[1]{%
  \otherlanguage{#1}%
  \item[\hskip\labelsep\scshape\abstractname.]%
}
\begin{document}
\title{\letitre}
\author[F. Dumas]{Fran{ç}ois Dumas}
\address{%
Fran{ç}ois Dumas\\
Université Clermont Auvergne -- CNRS\\
Laboratoire de mathématiques Blaise Pascal -- UMR6620\\
F-63000 Clermont-Ferrand\\
France
}
\email{{francois.dumas@uca.fr}}
\author[F. Martin]{Fran{ç}ois Martin}
\address{%
Fran{ç}ois Martin\\
Université Clermont Auvergne -- CNRS\\
Laboratoire de mathématiques Blaise Pascal -- UMR6620\\
F-63000 Clermont-Ferrand\\
France
}
\email{{francois.martin@uca.fr}}
\author[E. Royer]{Emmanuel Royer}
\address{%
Emmanuel Royer\\
Université Clermont Auvergne -- CNRS -- Wolfgang Pauli Institute\\
Institut CNRS Pauli -- IRL2842 \\
A-1090 Wien\\
Autriche
}
\address{%
Emmanuel Royer\\
CNRS – Université de Montréal CRM – CNRS\\
IRL3457\\
Montréal\\
Canada
}
\email{{emmanuel.royer@math.cnrs.fr}}
\date{\DTMnow}
\thanks{Les travaux des deux derniers auteurs sont partiellement financés par le projet ANR-23-CE40-0006-01 Gaec. Le troisième auteur a bénéficié de précieux échanges avec Yuk-kam Lau et Ben Kane lors de visites à l'\emph{\texten{Institute of Mathematical Research}} de \emph{Hong Kong University} permises par le projet Hubert Curien Procore \no~48166WK. Il remercie Timothy Browning de l'\emph{Institute of Science and Technology Austria} pour son accueil.  Dans le but d'une publication en libre accès, cette version du texte est distribuée sous licence \href{https://creativecommons.org/licenses/by/4.0/deed.fr}{CC-BY}}
\keywords{\lesmotscles}
\subjclass[2020]{\lescodesMSC}
\begin{abstracts}
\abstractin{french}
\leresume

\abstractin{english}
\leresumeanglais
\end{abstracts}
\maketitle
\tableofcontents
\newcommand{\lintro}{Introduction}
\section{\lintro}
Cet article propose une étude analytique et algébrique des formes singulières quasi-Jacobi d'indice nul. Il étudie en particulier la stabilité par dérivations de certaines sous-algèbres significatives (formes elliptiques, formes quasi-Jacobi de type quasielliptique, formes quasi-Jacobi de type quasimodulaire), avec l'objectif de construire des suites d'opérateurs bidifférentiels constituant des déformations formelles de ces algèbres, crochets de Rankin-Cohen ou transvectants.

Pour les actions (paramétrées par un entier positif, le poids) du groupe modulaire \(\sldz\) sur l'algèbre des fonctions d'une variable complexe \(\tau\) du demi-plan de Poincaré \(\pk\) à valeurs dans \(\CC\), il est bien connu  que l'algèbre \(\AlgM\) des formes modulaires (graduée par le poids) n'est pas stable par la dérivation \(\partial_\tau\). Il y a au moins deux voies pour lever cette obstruction. La première est de construire de façon canonique une suite d'opérateurs bidifférentiels en \(\partial_\tau\) dits crochets de Rankin-Cohen qui stabilisent M (cf. \cite{zbMATH05808162}) et qui constituent de plus (cf. \cite{zbMATH05156388}, \cite{zbMATH07362171} et \cite{zbMATH05808162}) une déformation formelle de l'algèbre \(\AlgM\) (au sens de \cite[\texten{Chapter}~13]{zbMATH06054532}). La seconde est de définir au-dessus de \(\AlgM\) l'algèbre \(\AlgQM\) des formes quasimodulaires qui est par construction stable par \(\partial_\tau\), graduée par le poids et filtrée par la profondeur (cf. \cite{zbMATH05808162}, \cite{zbMATH06128504}). 
Ces deux points de vue sont étroitement liés puisqu'une méthode pour montrer la stabilité de \(\AlgM\) par les crochets de Rankin-Cohen passe par un prolongement de leur définition à l'algèbre \(\AlgQM\) (voir \cite[\texten{Section}~5]{MR1280058}, ou  \cite[\texten{Proposition}~9]{zbMATH07362171}). C'est une démarche comparable qui est proposée dans cet article pour l'action du groupe de Jacobi sur des fonctions en deux variables. Elle nécessite de reprendre dans un contexte formalisé et unifié diverses notions dispersées dans la littérature sur les formes de Jacobi et formes quasi-Jacobi (voir par exemple \cite{MR4281261}, \cite{zbMATH05953688}, \cite{MR0781735}, \cite{Fogliasso}).

Dans ce qui suit on considère les actions (paramétrées par deux entiers positifs, le poids et l'indice) du groupe de Jacobi \(\Jac\) sur les fonctions de deux variables complexes \((\tau,z)\) de \(\pkC\) dans \(\CC\). La notion de forme de Jacobi singulière s'en déduit (définition \ref{eq_dfnJacobiSing}), le terme singulier faisant ici référence aux hypothèses analytiques de périodicité et de méromorphie nécessaires que nous explicitons pour plus de clarté à la définition \ref{defsing}. En notant \(\AlgJS_k\) l'espace vectoriel des formes de Jacobi singulières d'indice nul et de poids \(k\), le théorème \ref{prop_strucJS} décrit l'algèbre graduée \(\AlgJS=\bigoplus\AlgJS_k\) comme l'algèbre des polyn\^omes \(\CC[\wp,\partial_z\wp, \eisen_4]\) où \(\wp\) est la fonction de Weierstra\ss{} et \(\eisen_4\) la série d'Eisenstein de poids 4. Elle co\"incide donc avec l'algèbre des formes elliptiques au sens de la définition \ref{dfn_forfonell}.  La fin de la première section de l'article est consacrée à la détermination (proposition \ref{prop_dimJS}) de la dimension des sous-espaces \(\AlgJS_k\).

L'algèbre \(\AlgJS\), comme sa sous-algèbre \(\AlgM\), n'est pas stable par la dérivation \(\partial_\tau\). Ceci conduit à introduire dans la section~\ref{sec_dectiondeux} la notion de forme quasi-Jacobi singulière d'indice nul, à laquelle sont attachés par construction un poids \(k\in\N\) et une double profondeur \((s_1,s_2)\in\N^2\)  (voir définition \ref{dfn_JS}). 
Ces formes quasi-Jacobi singulières se structurent en une algèbre \(\AlgQJS\) graduée par le poids et doublement filtrée par la profondeur, que le théorème \ref{thm_strucQJ} décrit comme l'algèbre de polyn\^omes en cinq variables \(\AlgQJS=\CC[\wp,\partial_z\wp, \eisen_4,\eisen_2,\Eisen_1]\) où \(\eisen_2\) est la série d'Eisenstein de poids~\(2\) et profondeur \((1,0)\) et \(\Eisen_1\) la première fonction d'Eisenstein décalée de \Ajout{poids} \(1\) et profondeur \((0,1)\). Les deux sous-algèbres \({\AlgQJmod}=\CC[\wp,\partial_z\wp, \eisen_4,\eisen_2]\) et \({\AlgQJell}=\CC[\wp,\partial_z\wp, \eisen_4,\Eisen_1]\) intermédiaires entre \(\AlgJS\) et \(\AlgQJS\)  correspondent aux formes quasi-Jacobi de profondeur \((s_1,0)\) et \((0,s_2)\), respectivement dénommées de type quasimodulaire et de type quasielliptique. 

La section~\ref{sec_trois} de l'article est consacrée à la construction de déformations formelles sur chacune des quatre algèbres en jeu, et leurs liens avec les crochets de Rankin-Cohen classiques sur la sous-algèbre \(\AlgM\). La dérivation \(\partial_\tau\) de \(\AlgQJS\) étant homogène de degré \(2\) pour le poids, on peut introduire à la proposition \ref{RCtauQ} des crochets de Rankin-Cohen sur \(\AlgQJS\) qui constituent une déformation formelle de \(\AlgQJS\) (voir \cite{zbMATH05156388} et \cite{zbMATH07362171}, suivant le principe initié en \cite{MR1280058}). En utilisant les arguments algébriques généraux de \cite[\texten{Theorem}~6]{zbMATH07362171}, on démontre au théorème \ref{RCtauA} que la sous-algèbre \({\AlgQJell}\) est stable par ces crochets qui prolongent ceux classiquement définis sur \(\AlgM\). La m\^eme méthode permet d'obtenir  au théorème \ref{RCtauE} une déformation formelle de l'algèbre \(\AlgJS\) des formes elliptiques prolongeant les crochets de Rankin-Cohen sur \(\AlgM\)  en considérant cette fois des opérateurs bidifférentiels en la dérivation \(d=\partial_\tau+\frac14\Eisen_1\partial_z\) (voir aussi avec une preuve différente \cite[\texten{Proposition}~2.15]{zbMATH05953688}). Pour le cas des formes quasi-Jacobi de type quasimodulaire, c'est par une stratégie très différente basée sur la notion de transvectants issue de la théorie classique des invariants (voir \cite{zbMATH01516969})  que l'on obtient au théorème \ref{TransB} une déformation formelle de l'algèbre \(\AlgQJmod\).

\section{Formes de Jacobi singulières}\label{sec_dectiondeux}
\subsection{Fonctions elliptiques associées à un réseau}\label{sub_ellres}
\cite[\texten{Chapter}~V]{zbMATH05500775} 
Soit \(\Reseau\) un réseau de \(\CC\). Une fonction \(f\colon\CC\to\CC\) méromorphe est dite \emph{elliptique pour \(\Reseau\)} si
\[
\forall z\in\CC\enspace\forall w\in \Reseau \quad f(z+w)=f(z). 
\] 
Un exemple fondamental de telle fonction est la fonction de Weierstra\ss{} associée au réseau \(\Reseau\) définie par
\[
\forall z\in\CC-\Reseau\quad\wp_{\Reseau}(z)=\frac{1}{z^2}+\sum_{w\in \Reseau-\{0\}}\left(\frac{1}{(z-w)^2}-\frac{1}{w^2}\right). 
\]
Toute fonction elliptique paire est une fraction rationnelle à coefficients complexes en \(\wp_\Reseau\) \cite[\texten{Proposition}~V.3.2]{zbMATH05500775}.
Pour tout entier \(k\geq 4\) pair, on définit le nombre complexe
\[
\eisen_{k,\Reseau}=\sum_{w\in \Reseau-\{0\}}w^{-k}. 
\]
La fonction \(\wp_\Reseau\) vérifie l'équation différentielle 
\begin{equation}\label{eq_derivrho}
\left(\wp_{\Reseau}'\right)^2=\wpdp_\Reseau(\wp_\Reseau) \quad\text{avec}\quad \wpdp_\Reseau(X)=4(X^3-15\eisen_{4,\Reseau}X-35\eisen_{6,\Reseau})\in\CC[X]. 
\end{equation}
Si \(P_1\), \(Q_1\), \(P_2\) et \(Q_2\) sont des fractions rationnelles, on a alors
\[
\left(P_1(\wp_\Reseau)+Q_1(\wp_\Reseau)\wp_{\Reseau}'\right)\left(P_2(\wp_\Reseau)+Q_2(\wp_\Reseau)\wp_{\Reseau}'\right)=\left(P_3(\wp_\Reseau)+Q_3(\wp_\Reseau)\wp_{\Reseau}'\right)
\]
avec 
\[
P_3=P_1P_2+\wpdp_\Reseau Q_1Q_{2}\quad\text{et}\quad Q_3=P_1Q_2+Q_1P_2. 
\]

En particulier, si \(P\) et \(Q\) sont deux fractions rationnelles de \(\CC(X)\) et si
\[
\widetilde{P}=\frac{P}{P^2-\wpdp_\Reseau Q^2} \quad\text{et}\quad\widetilde{Q}=-\frac{Q}{P^2-\wpdp_\Reseau Q^2}. 
\]
alors
\[
\left(P(\wp_\Reseau)+Q(\wp_\Reseau)\wp_{\Reseau}'\right)\left(\widetilde{P}(\wp_\Reseau)+\widetilde{Q}(\wp_\Reseau)\wp_{\Reseau}'\right)=1. 
\]
Ainsi, l'ensemble
\[
\mathcal{E}(\Reseau)=\CC\left(\wp_\Reseau\right)\oplus\CC\left(\wp_\Reseau\right)\wp_{\Reseau}'
\]
est un corps. Puisque \(\CC\left(\wp_\Reseau\right)\) est le corps des fonctions elliptiques paires, et puisque si \(f\) est elliptique et impaire, alors le quotient \(f/\wp_{\Reseau}'\) est elliptique et paire, on conclut que le corps \(\mathcal{E}(\Reseau)\) est l'ensemble des fonctions elliptiques pour \(\Reseau\).
\subsection{Formes elliptiques}
Pour tout \(\lambda\in\CC^*\), on a
\begin{align}
\wp_{\lambda \Reseau}(z) &= \lambda^{-2}\wp_{\Reseau}\left(\lambda^{-1}z\right) \label{eq_trsfreseauW}\\
\eisen_{k,\lambda \Reseau} &=\lambda^{-k}\eisen_{k,\Reseau}\label{eq_trsfreseaue}
\end{align}
de sorte que l'on peut se restreindre aux représentants des classes d'équivalences de réseaux par homothétie complexe. Tout réseau ayant une base \((w_1,w_2)\) avec \(w_2/w_1\) appartenant au demi-plan de Poincaré \(\pk\) des complexes de partie imaginaire strictement positive, on se restreint aux réseaux \(\Z\oplus\tau\Z\) avec \(\tau\in\pk\).

On définit alors pour tout entier \(k\geq 4\) pair la fonction d'Eisenstein de poids \(k\) par
\[
\begin{array}{ccccc}
\eisen_k & \colon & \pk & \to & \CC\\
& & \tau & \mapsto & \eisen_{k,\Z\oplus\tau\Z}. 
\end{array}
\]
C'est une forme modulaire de poids \(k\) sur \(\sldz\) dont le développement de Fourier\footnote{L'entier \(\sigma_{k-1}(n)\) est \(\sum_{d\mid n}d^{k-1}\). La suite \((B_n)_{n\geq 0}\) est définie par la série génératrice : \[\frac{t}{e^t-1}=\sum_{n=0}^{+\infty}B_n\frac{t^n}{n!}.\]} est
\begin{equation}\label{eq_devFEisen}
\eisen_k(\tau)=\frac{2^k\lvert B_{k}\rvert}{k!}\pi^k\left(1-\frac{2k}{B_{k}}\sum_{n=1}^{+\infty}\sigma_{k-1}(n)e^{2i\pi n\tau}\right) \quad(\text{\(k\geq 4\) pair}).
\end{equation}
On définit \(\eisen_2\) en étendant cette égalité à \(k=2\). La fonction \(\eisen_2\) n'est pas une forme modulaire.

De la même façon on définit la fonction de Weierstra\ss{} par
\[
\begin{array}{ccccc}
\wp & \colon & \pkC & \to & \CC\\
& & (\tau,z) & \mapsto & \wp_{\Z\oplus\tau\Z}(z). 
\end{array}
\]
\begin{dfn}\label{dfn_forfonell}
On appelle \emph{forme elliptique} tout élément de l'anneau \(\CC[\wp,\dz\wp,\eisen_4]\) et \emph{fonction elliptique} tout élément du corps de fractions \(\CC(\wp,\dz\wp,\eisen_4)\). 
\end{dfn}

\Ajout{
\begin{rem}\label{rem_conventionunedeux}
On notera aussi \(\eisen_k\) la fonction de deux variables induite par la fonction d'Eisenstein de poids \(k\) et on l'appellera encore fonction d'Eisenstein de poids \(k\) :
\begin{equation}\label{eq_conventionunedeux}
 \begin{array}{ccccc}
 \eisen_k & \colon &  \pkC & \to & \CC\\
& &  (\tau, z) & \mapsto & \eisen_k(\tau)
\end{array}
\qquad\text{(\(k \geq 2\) pair). }
\end{equation}
\end{rem}
\begin{rem}
Notre utilisation du terme \emph{elliptique} fait référence à la théorie des fonctions elliptiques associées à un réseau fixe, décrites à l'aide de la fonction de Weierstra\ss{} associée au réseau et de sa dérivée. La volonté de \og~libérer~\fg{} le réseau impose que nous ajoutions \(\eisen_4\) et \(\eisen_6\). L'équation différentielle 
\begin{equation}\label{eq_differfonda}
(\dz\wp)^2-4\wp^3+60\eisen_4\wp+140\eisen_6=0
\end{equation}
impose ensuite de retirer la fonction \(\eisen_6\) des générateurs : elle est en effet polynomiale en les fonctions algébriquement indépendantes \(\wp\), \(\dz\wp\) et \(\eisen_4\) (voir le théorème~\ref{prop_strucJS}).  On redémontrera~\eqref{eq_differfonda} indépendamment plus loin dans ce texte (voir l'équation~\eqref{eq_diffwp}). 
\end{rem}
}


\subsection{Formes de Jacobi singulières d'indice nul}
\subsubsection{Action du groupe de Jacobi\texorpdfstring{ sur \(\pkC\) et sur \(\fonc\)}{}}

Le groupe multiplicatif \(G=\sldz\) agit à droite sur le groupe additif \(H=\Z^2\) par
\begin{equation*}\label{Gsur H}
\forall g=\begin{pmatrix}a & b\\ c & d\end{pmatrix}\in G, \ \forall \Lambda=(\lambda,\mu)\in H, \ \Lambda g=(\lambda a+\mu c,\lambda b+\mu d)
\end{equation*}
Le groupe de Jacobi est le produit semi-direct \(G\ltimes H=\Jac\) qui s'en déduit, avec le produit 
\begin{equation*}
\forall g,g'\in G, \ \forall \Lambda,\Lambda'\in H, \  (g,\Lambda)(g',\Lambda')=(gg', \Lambda g'+\Lambda').
\end{equation*}
Les groupes \(G\) et \(H\) agissent à gauche sur \(\pkC\)
\begin{align*}
\forall g=\begin{pmatrix}a & b\\ c & d\end{pmatrix}\in G, \ \forall(\tau,z)\in\pkC, \ g(\tau,z)&=\left(\frac{a\tau+b}{c\tau+d},\frac{z}{c\tau+d}\right),\\
\forall \Lambda=(\lambda,\mu)\in H, \ \forall(\tau,z)\in\pkC, \ \Lambda(\tau,z)&=(\tau,z+\lambda\tau+\mu).
\end{align*}
On en déduit une action à droite \(\vert_G\) de \(G=\sldz\) et une action à droite \(\vert_H\) de \(H=\Z^2\) sur l'algèbre de fonctions \(\fonc\) définies par
\begin{align}\forall g=\begin{pmatrix}a & b\\ c & d\end{pmatrix}\in G, \ \forall f\in\fonc,\ &f\vert_G g : (\tau,z)\mapsto f\left(\frac{a\tau+b}{c\tau+d},\frac{z}{c\tau+d}\right)\label{actGsurFonc}\\
\forall \Lambda=(\lambda,\mu)\in H, \ \forall f\in\fonc,\ &f\vert_H\Lambda : (\tau,z)\mapsto f(\tau,z+\lambda\tau+\mu).\label{actHsurFonc}
\end{align}
Ces deux actions sont compatibles au sens où
\begin{equation*}
\forall g\in G,\ \forall\Lambda\in H,\ \forall f\in\fonc, \ (f\vert_G g)\vert_H \Lambda g=(f\vert_H \Lambda)\vert_G g.
\end{equation*}
Ceci permet d'en déduire une action  à droite du groupe de Jacobi \(\Jac\) sur l'algèbre de fonctions \(\fonc\) :
\begin{equation}\label{actJsurFonc}
\forall g\in G,\ \forall\Lambda\in H,\ \forall f\in\fonc, \ f\vert_{G\ltimes H}(g,\Lambda)=(f\vert_G g)\vert_H \Lambda.
\end{equation}
En d'autres termes, pour tout \((\tau,z)\in\pkC\)
\begin{equation}
f\vert_{G\ltimes H}\left(\begin{pmatrix}a & b\\ c & d\end{pmatrix},(\lambda,\mu)\right)(\tau,z)=f\left(\frac{a\tau+b}{c\tau+d},\frac{z+\lambda\tau+\mu}{c\tau+d}\right). 
\end{equation}
Plus généralement si \(\nu\) est une application de \(\Jac\) dans \(\fonc\), alors l'application 
\begin{equation}\label{actJsurFoncnu}
(f,(g,\Lambda))\mapsto\nu(g,\Lambda)\left(f\vert_{G\ltimes H}(g,\Lambda)\right)
\end{equation}
définit une action à droite du groupe de Jacobi sur l'algèbre \(\fonc\) si et seulement si \(\nu\) est un 1-cocycle pour l'action~\eqref{actJsurFonc}, c'est-à-dire vérifie
\begin{equation}\label{def1cocycle}
\nu\left((g,\Lambda)(g',\Lambda')\right)=\left(\nu(g,\Lambda)\vert_{G\ltimes H}(g',\Lambda')\right)\nu(g',\Lambda').
\end{equation}
Un tel 1-cocycle \(\nu\) peut être obtenu à partir d'un 1-cocycle \(\nu_G\) pour l'action~\eqref{actGsurFonc} de \(G\) et d'un 1-cocycle \(\nu_H\) pour l'action~\eqref{actHsurFonc} de \(H\) en posant
\begin{equation}\label{cocycleGH}\forall(g,\Lambda)\in G\ltimes H, \ \nu(g,\Lambda)=(\nu_G(g)\vert_H\Lambda)\nu_H(\Lambda)
\end{equation}
qui vérifie la relation~\eqref{def1cocycle} si et seulement si on a la condition de compatibilité
\begin{equation}\label{compatible1cocycle}\forall(g,\Lambda)\in G\ltimes H, \ \left(\nu_G(g)\vert_H\Lambda g\right)\nu_H(\Lambda g)=\nu_G(g)\left(\nu_H(\Lambda)\vert_G g\right).
\end{equation}

Soit \(j\colon\sldz\to\fonc\) et \(\ell\colon\sldz\to\fonc\) définies pour \(g=\begin{psmallmatrix}a & b\\c & d\end{psmallmatrix}\) et \((\tau,z)\in\pkC\) par
\[
j(g)(\tau,z)=c\tau+d,\qquad\ell(g)(\tau,z)=\fe\left(-\frac{cz^2}{c\tau+d}\right)
\]
où \(\fe\colon\xi\mapsto\exp(2\ic\pi\xi)\). Ce sont des  \(1\)-cocycles de \(\sldz\) dans \(\fonc\).  Pour tous entiers positifs \(k\) et \(m\), l'application \(j^k\ell^m\) en est donc un aussi. 

Soit \(p\colon\Z^2\to\fonc\) définie pour \(\Lambda=(\lambda,\mu)\) et \((\tau,z)\in\pkC\) par
\[
p(\Lambda)(\tau,z)=\fe(\lambda^2\tau+2\lambda z). 
\]
C'est un \(1\)-cocycle de \(\Z^2\) dans \(\fonc\).  Pour tout entier positif \(m'\), l'application \(p^{m'}\) en est donc un aussi. 

Suivant la construction de~\eqref{cocycleGH}, considérons alors l'application
\[
\begin{array}{ccccc}
\nu_{k,m,m'}	& \colon	& \sldz\ltimes\Z^2	& \to		& \fonc\\
	& 		& (g,\Lambda) 		& \mapsto	& \left((j^k\ell^m)(g)\vert_{\Z^2}\Lambda\right) p^{m'}(\Lambda).
\end{array}
\]
La condition de compatibilité~\eqref{compatible1cocycle} est satisfaite si et seulement si \(m'=m\) et on en déduit que \(\nu_{k,m,m}=\nu_{k,m}\) est un \(1\)-cocycle pour l'action~\eqref{actJsurFoncnu} de \(\Jac\). 

Finalement, si \(k\) et \(m\) sont des entiers positifs, on définit une action de \(\Jac\) sur \(\fonc\) en posant
\begin{equation}\label{eq_vertkm}
\left(f\vert_{k,m}A\right)(\tau,z)=(c\tau+d)^{-k}\fe^{m}\left(-\frac{c(z+\lambda\tau+\mu)^2}{c\tau+d}+\lambda^2\tau+2\lambda z\right)f\left(\frac{a\tau+b}{c\tau+d},\frac{z+\lambda\tau+\mu}{c\tau+d}\right)
\end{equation}
pour tout \(A=(g,\Lambda)=\left(\begin{psmallmatrix}a & b\\c & d\end{psmallmatrix},(\lambda,\mu)\right) \in \Jac\), avec \(\fe^{m}(\xi)=\exp(2\ic m\pi\xi)\).
%

\subsubsection{Définition et exemples fondamentaux}\label{subsec_jacosingexple}

\begin{dfn}\label{defsing}
Une fonction \(f\colon\pkC\to\CC\) est \emph{singulière} si :
\begin{itemize}
\item pour tout \(\tau\in\pk\), la fonction \(z\mapsto f(\tau,z)\) est 1-périodique, méromorphe sur \(\CC\) et telle que ses seuls pôles sont les points du réseau \(\Z\oplus\tau\Z\), tous de m\^eme ordre, cet ordre étant de plus indépendant de \(\tau\) ;
\item la fonction \(\tau\mapsto f(\tau,z)\) est 1-périodique ;
\item les coefficients de Laurent de \(z\mapsto f(\tau,z)\) en \(0\) sont des fonctions holomorphes sur \(\pk\) et en l'infini. 
\end{itemize}
On note \(\Sing\) l'ensemble des fonctions singulières. 
\end{dfn}

\begin{rem}
Explicitons la troisième condition : notons \(A_n\) la fonction \(n\)-ième coefficient de Laurent de \(z\mapsto f(\tau,z)\) en \(0\). Par la deuxième condition, les fonctions \(A_n\) sont 1-périodiques. On demande donc qu'elles soient holomorphes sur \(\pk\) et admettent un développement de Fourier de la forme
\[
A_n(\tau)=\sum_{r=0}^{+\infty}\widehat{A_n}(r)\fe(r\tau). 
\]
\end{rem}

\begin{dfn}\label{eq_dfnJacobiSing}
Soit \(k\) et \(m\) des entiers positifs. Une fonction singulière \(f\colon\pkC\to\CC\) est une \emph{forme de Jacobi singulière}\footnote{La définition de forme de Jacobi méromorphe ne semble pas fixée. On s'inspire ici de~\cite[\S~3.2]{zbMATH06346312}.} d'indice \(m\) et de poids \(k\) si elle vérifie \(f\vert_{k,m}A=f\) pour tout \(A\in \Jac \). 
\end{dfn}

De façon explicite, une fonction singulière est une forme de Jacobi singulière d'indice \(m\) et de poids \(k\) si et seulement si elle vérifie les deux relations suivantes :
\begin{itemize}
\item pour tout \((\lambda,\mu)\in\Z^2\),
\begin{equation}\label{eq_trsfellf}
f(\tau,z+\lambda\tau+\mu)=e^{-2i\pi m(\lambda^2\tau+2\lambda z)}f(\tau,z)\text{ ;}
\end{equation}
\item pour toute \(\bigl(\begin{smallmatrix}a & b\\ c & d\end{smallmatrix}\bigr)\in\sldz\), 
\begin{equation}\label{eq_trsfmodf}
f\left(\frac{a\tau+b}{c\tau+d},\frac{z}{c\tau+d}\right)=e^{2i\pi mcz^2/(c\tau+d)}(c\tau+d)^kf(\tau,z)\text{ ;}
\end{equation}
\end{itemize}

\AjoutRouge{
\begin{rem}
Après circulation d'une première version de ce travail, Jan-Willem M. van Ittersum nous a informé de son article~\cite{zbMATH07634708} dans lequel il introduit la notion de \emph{forme de Jacobi strictement méromorphe}. Nous proposons une autre présentation d'un cas particulier de la sienne et, en particulier nous insistons sur le contexte analytique en évitant la notion de forme méromorphe à plusieurs variables. 
\end{rem}
}

On fixe une matrice \(g=\bigl(\begin{smallmatrix}a & b\\ c & d\end{smallmatrix}\bigr)\) dans \(\sldz\) et \((\lambda, \mu) \in \Z^2\). On a
\[
\wp\left(\frac{a\tau+b}{c\tau+d},\frac{z}{c\tau+d}\right)=\wp_{\Z\oplus\frac{a\tau+b}{c\tau+d}\Z}\left(\frac{z}{c\tau+d}\right).
\]
Or, \(\Z\oplus\frac{a\tau+b}{c\tau+d}\Z=\frac{1}{c\tau+d}\left(\Z\oplus\tau\Z\right)\), l'égalité~\eqref{eq_trsfreseauW} implique alors
\[
\wp_{\Z\oplus\frac{a\tau+b}{c\tau+d}\Z}\left(\frac{z}{c\tau+d}\right)=(c\tau+d)^2\wp_{\Z\oplus\tau\Z}(z)
\]
c'est-à-dire
\begin{equation}\label{eq_trsfmodwp}
\wp\left(\frac{a\tau+b}{c\tau+d},\frac{z}{c\tau+d}\right)=(c\tau+d)^2\wp(\tau,z). 
\end{equation}
D'autre part, par définition des fonctions elliptiques
\begin{equation}\label{eq_trsfellwp}
\wp\left(\tau,z+\lambda\tau+\mu\right)=\wp(\tau,z). 
\end{equation}
Notons \(\dz=\partial/\partial z\). En dérivant \eqref{eq_trsfmodwp} et \eqref{eq_trsfellwp}, on trouve
\begin{equation}\label{eq_trsfmoddzwp}
\left(\dz\wp\right)\left(\frac{a\tau+b}{c\tau+d},\frac{z}{c\tau+d}\right)=(c\tau+d)^3\dz\wp(\tau,z). 
\end{equation}
et
\begin{equation}\label{eq_trsfelldzwp}
\left(\dz\wp\right)\left(\tau,z+\lambda\tau+\mu\right)=\dz\wp(\tau,z). 
\end{equation}

\Ajout{Avec la convention faite dans la remarque~\ref{rem_conventionunedeux},} la fonction \(\eisen_4\) satisfait l'équation 
\begin{equation}\label{ploufplouf}
\eisen_4\left(\frac{a\tau+b}{c\tau+d}, \frac{z}{c\tau+d}\right)=(c\tau+d)^4\eisen_4(\tau, z).
\end{equation}
Le développement de Laurent de \(\wp\) est donné par
\begin{equation}\label{eq_delLaurent}
\wp(\tau,z)=\frac{1}{z^2}+\sum_{n=1}^{+\infty}(2n+1)\eisen_{2n+2}(\tau)z^{2n}
\end{equation}
\cite[\texten{Proposition}~V.2.5]{zbMATH05500775}, ce qui montre que \(\wp\) (et donc \(\dz\wp\)) sont singulières. 

Les relations \eqref{eq_trsfmodwp} à \eqref{eq_delLaurent} montrent donc que \(\wp, \dz\wp\) et \(\eisen_4\) sont des formes de Jacobi singulières d'indice nul et de poids respectifs \(2, 3\) et \(4\). La suite de cette section a pour objectif %
\AjoutRouge{de clarifier le cadre analytique de la \texten{Proposition 2.8} de~\cite{zbMATH05953688}. C'est une nouvelle réponse à l'objectif semblable suivi par~\cite[\texten{Proposition 2.7}]{zbMATH07634708}.}

\begin{thm}\label{prop_strucJS}

\begin{enumerate}
\item Les fonctions \(\wp, \dz\wp\) et \(\eisen_4\) sont algébriquement indépendantes.
\item L'algèbre des formes elliptiques est graduée par le poids. On note \(\displaystyle\AlgJS=\CC[\wp,\dz\wp,\eisen_4]=\bigoplus_{k \in \N} \AlgJS_k\) où \(\AlgJS_k\) est l'ensemble des éléments \(\displaystyle\sum_{\substack{(a,b,c)\in\N^3\\ 2a+3b+4c=k}}\alpha(a, b, c)\wp^a\left(\dz\wp\right)^b\eisen_4^c\) avec \(\alpha(a, b, c) \in \CC\).
\item Pour tout \(k \geq 0\), \(\AlgJS_k\) est l'ensemble des formes de Jacobi singulières d'indice nul et de poids \(k\).
\end{enumerate}
\end{thm}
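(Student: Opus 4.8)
The plan is to prove the algebraic independence (1) first, to read off (2) and the easy half of (3) as formal consequences, and to concentrate the work on the non-trivial half of (3). For (1), I would begin by recording the Weierstra\ss{} relation \eqref{eq_differfonda}; if one wants to stay self-contained it follows by comparing the leading Laurent coefficients at \(z=0\) supplied by \eqref{eq_delLaurent}, the even elliptic function \((\dz\wp)^2-4\wp^3+60\eisen_4\wp\) being holomorphic at \(0\) and hence equal to the constant \(-140\eisen_6\). Granting in addition the classical algebraic independence of \(\eisen_4\) and \(\eisen_6\) over \(\CC\) (for instance via the non-constancy of the \(j\)-invariant), one argues: if \(P(\wp,\dz\wp,\eisen_4)=0\) for some \(P\in\CC[X,Y,Z]\), use \eqref{eq_differfonda} to rewrite \(P(\wp,\dz\wp,\eisen_4)\) in the form \(A(\wp,\eisen_4,\eisen_6)+B(\wp,\eisen_4,\eisen_6)\,\dz\wp\); the parts even and odd in \(z\) vanish separately, so \(A(\wp,\eisen_4,\eisen_6)=B(\wp,\eisen_4,\eisen_6)=0\); for each fixed \(\tau\) the function \(\wp_{\Z\oplus\tau\Z}\) is transcendental over \(\CC\), so every coefficient of \(A\) and of \(B\) (a polynomial in \(\eisen_4(\tau),\eisen_6(\tau)\)) vanishes for all \(\tau\), and algebraic independence of \(\eisen_4,\eisen_6\) gives \(A=B=0\); undoing the rewriting — in which \(\eisen_6\) enters only linearly, with nonzero coefficient — forces \(P=0\).

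Granting (1), part (2) is immediate: the assignment \(\wp\mapsto X\), \(\dz\wp\mapsto Y\), \(\eisen_4\mapsto Z\) identifies \(\AlgJS\) with \(\CC[X,Y,Z]\), and weighting \(X,Y,Z\) by \(2,3,4\) — dictated by the transformation laws \eqref{eq_trsfmodwp}, \eqref{eq_trsfmoddzwp} and \eqref{ploufplouf} — makes it a graded algebra whose weight-\(k\) component is spanned by the linearly independent monomials \(\wp^a(\dz\wp)^b\eisen_4^c\) with \(2a+3b+4c=k\), exactly as in the statement. The inclusion \(\AlgJS_k\subseteq\{\text{singular Jacobi forms of index }0\text{, weight }k\}\) in (3) is then formal too: \(\wp\), \(\dz\wp\), \(\eisen_4\) are such forms of weights \(2,3,4\) (checked before the statement via \eqref{eq_trsfmodwp}--\eqref{eq_delLaurent}), a product of singular Jacobi forms of index \(0\) is again one (indices add to \(0\), weights add, and \(z\)-periodicity, confinement of the poles to the lattice, and holomorphy of the Laurent coefficients on \(\pk\) and at \(\infty\) are all inherited), and a finite sum of such forms of a fixed weight \(k\) is too.

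The reverse inclusion in (3) is the heart of the matter. Let \(f\) be a singular Jacobi form of index \(0\) and weight \(k\). Since \(m=0\), relation \eqref{eq_trsfellf} shows that \(f(\tau,\cdot)\) is elliptic for \(\Reseau_\tau:=\Z\oplus\tau\Z\), so by \S\ref{sub_ellres} one may write \(f(\tau,\cdot)=R_\tau(\wp_{\Reseau_\tau})+S_\tau(\wp_{\Reseau_\tau})\,\wp_{\Reseau_\tau}'\) with \(R_\tau,S_\tau\in\CC(X)\) uniquely determined by splitting into even and odd parts in \(z\). The clause that the poles of \(f(\tau,\cdot)\) lie only on \(\Reseau_\tau\) forces \(R_\tau\) and \(S_\tau\) to be \emph{polynomials}: a finite pole of \(R_\tau\) or \(S_\tau\) would produce, through the surjective degree-\(2\) map \(\wp_{\Reseau_\tau}\), poles of \(f(\tau,\cdot)\) off the lattice, and a parity argument shows the even part \(R_\tau(\wp_{\Reseau_\tau})\) and the odd part \(S_\tau(\wp_{\Reseau_\tau})\wp_{\Reseau_\tau}'\) cannot cancel each other's off-lattice poles. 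The uniform-in-\(\tau\) bound on the pole order of \(f(\tau,\cdot)\) supplied by the third clause then bounds \(\deg R_\tau\) and \(\deg S_\tau\) uniformly, so one may write \(R_\tau(X)=\sum_i a_i(\tau)X^i\) and \(S_\tau(X)=\sum_j b_j(\tau)X^j\) with finitely many coefficient functions \(a_i,b_j\colon\pk\to\CC\). Comparing the Laurent expansion \(f(\tau,z)=\sum_n A_n(\tau)z^n\) with the expansions \eqref{eq_delLaurent} of \(\wp\) and \(\dz\wp\), and solving recursively starting from the most polar term, expresses each \(a_i(\tau)\) and \(b_j(\tau)\) as a universal polynomial in the \(A_n(\tau)\) and the Eisenstein values \(\eisen_{2m}(\tau)\); as the \(A_n\) (third clause) and the \(\eisen_{2m}\) are holomorphic on \(\pk\) and at \(\infty\), so is every \(a_i\) and \(b_j\). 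Finally, applying \eqref{eq_trsfmodf} and using \eqref{eq_trsfmodwp}, \eqref{eq_trsfmoddzwp} together with the uniqueness of the decomposition in \(\mathcal{E}(\Reseau_\tau)\), one matches the coefficients of \(\wp^i\) and of \(\wp^j\dz\wp\) to obtain \(a_i(\gamma\tau)=(c\tau+d)^{k-2i}a_i(\tau)\) and \(b_j(\gamma\tau)=(c\tau+d)^{k-2j-3}b_j(\tau)\) for \(\gamma=\bigl(\begin{smallmatrix}a&b\\c&d\end{smallmatrix}\bigr)\in\sldz\). Thus \(a_i\) is a modular form of weight \(k-2i\) and \(b_j\) one of weight \(k-2j-3\) (so \(a_i=0\) unless \(0\le 2i\le k\), and likewise for \(b_j\)); since the graded algebra \(\AlgM\) of modular forms on \(\sldz\) is \(\CC[\eisen_4,\eisen_6]\), each \(a_i\) and \(b_j\) is a weight-homogeneous polynomial in \(\eisen_4,\eisen_6\). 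Substituting \(\eisen_6=\tfrac1{140}\bigl(4\wp^3-60\eisen_4\wp-(\dz\wp)^2\bigr)\) from \eqref{eq_differfonda} turns \(a_i\,\wp^i\) and \(b_j\,\wp^j\dz\wp\) into homogeneous elements of weight \(k\) of \(\CC[\wp,\dz\wp,\eisen_4]\), and summing gives \(f\in\AlgJS_k\).

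The main obstacle is precisely this converse inclusion. One must extract from the three clauses defining a singular function exactly the right information, and in a form uniform in \(\tau\): polynomiality (not merely rationality) of \(R_\tau\) and \(S_\tau\) in \(\wp_{\Reseau_\tau}\), a \(\tau\)-independent bound on their degrees, and holomorphy of their coefficients on \(\pk\) and at \(\infty\) — only then can the modular transformation law be brought to bear. Keeping all of this uniform in \(\tau\), together with fixing the precise reading of the pole-order clause (for instance for an element such as \(\eisen_4\wp\), whose polar part degenerates at the zeros of \(\eisen_4\)), is where the delicacy lies.
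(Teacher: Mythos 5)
Your proof is correct and follows the paper's overall strategy for the hard half of (3): view \(f(\tau,\cdot)\) as an elliptic function for \(\Z\oplus\tau\Z\) with poles confined to the lattice, express it polynomially in \(\wp\) and \(\dz\wp\) with \(\tau\)-dependent coefficients, show those coefficients are holomorphic on \(\pk\) and at infinity via the Laurent recursion and modular via the transformation law, conclude they lie in \(\CC[\eisen_4,\eisen_6]\), and eliminate \(\eisen_6\) through \eqref{eq_differfonda}. Two of your choices genuinely differ. For (1), you invoke the transcendence of \(\wp_{\Z\oplus\tau\Z}\) over \(\CC\) for each fixed \(\tau\) plus the injectivity of the rewriting \(P\mapsto A+B\,\dz\wp\) (justified because \(\eisen_6\) enters the substitution linearly), while the paper specializes at \(z=\tau/2\), where \(\dz\wp\) vanishes, and uses the algebraic independence of \(\eisen_4\) and \(\tau\mapsto\wp(\tau,\tau/2)\); both ultimately rest on the independence of \(\eisen_4\) and \(\eisen_6\). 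For (3), you treat the even and odd parts of \(f(\tau,\cdot)\) at once, which forces you to prove directly that \(S_\tau\) is a polynomial — your observation that each parity component separately has poles only on the (symmetric) lattice is exactly the right justification; the paper instead uses \eqref{eq_trsfmodf} with \(-I\) to reduce to a single parity, settles the even-weight case with \(P_\tau(\wp)\), and handles odd weight by multiplying by \(\dz\wp\) and killing the resulting \(P(\eisen_4,\wp)/\dz\wp\) term via the absence of a pole at \(z=\tau/2\). Your route is slightly more self-contained; the paper's sidesteps any discussion of rational versus polynomial \(S_\tau\). Two harmless points: the \(\tau\)-independent bound on the pole order comes from the first clause of Définition \ref{defsing}, not the third (the third supplies the holomorphy of the coefficients \(A_n\), which you then use correctly); and the degeneracy you flag for \(\eisen_4\wp\) at the zeros of \(\eisen_4\) is an infelicity of the definition itself, shared by the paper, not a defect of your argument.
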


\begin{proof}
Montrons d'abord l'indépendance algébrique de \(\wp\), \(\dz\wp\) et \(\eisen_4\). On a, pour tout \(\tau \in \pk\), \(\dz\wp\left(\tau, \dfrac{\tau}2\right)=0\) d'après \cite[\texten{Lemma}~V.2.8]{zbMATH05500775}. Grâce à~\eqref{eq_differfonda}, on a entre les fonctions \(\eisen_4\), \(\eisen_6\) et \(\widetilde{\wp}\colon\tau\mapsto\wp(\tau,\tau/2)\) une relation de dépendance algébrique~:
\[
\widetilde{\wp}^3-15\eisen_4\widetilde{\wp}-35\eisen_6=0.
\]
 Puisque \(\eisen_4\) et \(\eisen_6\) sont algébriquement indépendantes on en déduit que les fonctions \(\eisen_4\) et \(\widetilde{\wp}\) sont algébriquement indépendantes. Supposons alors \(\wp\), \(\dz\wp\) et \(\eisen_4\) algébriquement dépendantes. Il existerait un entier \(N\geq 1\) et une suite non nulle de complexes \(\alpha_{k,\ell}^{(i)}\) tels que 
 \[
 \sum_{i=0}^Nf_i(\dz\wp)^i=0\quad\text{avec}\quad f_i=\sum_{k,\ell}\alpha_{k,\ell}^{(i)}\eisen_4^\ell\wp^k. 
 \]
En spécialisant cette égalité à \(z=\tau/2\), on montre que \(\tau \mapsto f_0\left(\tau, \dfrac{\tau}2\right)\) est nulle, d'où, par récurrence, tous les \(\tau \mapsto f_i\left(\tau, \dfrac{\tau}2\right)\) sont nuls. Par indépendance algébrique de \(\eisen_4\) et \(\widetilde{\wp}\) il en résulte que tous les \(\alpha_{k,\ell}^{(i)}\) sont nuls d'où une contradiction. Cela démontre le point (1). Le point (2) en découle gr\^ace à la définition \ref{dfn_forfonell}.

Montrons (3). Grâce à~\eqref{eq_trsfmodf} appliquée à \((a,b,c,d)=(-1,0,0,-1)\), toute forme de Jacobi singulière d'indice nul et de poids \(k\) est paire en la variable \(z\) si \(k\) est pair et impaire en la variable \(z\) si \(k\) est impair. 

Soit \(f\) une forme de Jacobi singulière d'indice nul et de poids \(k\). Pour tout \(\tau\), la fonction \(z\mapsto f(\tau,z)\) est une fonction elliptique associée au réseau \(\Z\oplus\tau\Z\) dont les pôles sont des points du réseau. Dans \(\qd{\CC}{\Z\oplus\tau\Z}\), cette fonction a donc au plus un pôle (qui peut être multiple) et celui-ci est en \(0\). 

\paragraph{Cas du poids pair}
Si \(k\) est pair, pour tout \(\tau\) il existe \(P_\tau\in\CC[X]\) tel que 
\[
f(\tau,z)=P_\tau\left(\wp(\tau,z)\right) 
\] 
et le degré \(n_0\) de \(P_{\tau}\) est la moitié de l'ordre du pôle de \(z\mapsto f(\tau,z)\) en \(0\) \cite[\texten{Proposition}~V.3.1]{zbMATH05500775}. Il est donc indépendant de \(\tau\) et il existe des fonctions \(a_0,\dotsc,a_{n_0}\) de \(\pk\) dans \(\CC\) telles que
\begin{equation}\label{eq_poljacpair}
f(\tau,z)=\sum_{j=0}^{n_0}a_j(\tau)\wp(\tau,z)^j.
\end{equation}
Compte-tenu de~\eqref{eq_trsfmodf} et \eqref{eq_trsfmodwp}, on a
\[
\sum_{j=0}^{n_0}(c\tau+d)^{2j}a_j\left(\frac{a\tau+b}{c\tau+d}\right)\wp(\tau,z)^j=\sum_{j=0}^{n_0}(c\tau+d)^{k}a_j(\tau)\wp(\tau,z)^j.
\]
La famille \((\wp_{{\Z\oplus\tau\Z}}^j)_{j\in\mathbb{N}}\) est linéairement indépendante. On en déduit que chaque \(a_j\) est une fonction faiblement modulaire\footnote{Au sens de~\cite{MR0498338}, c'est-à-dire méromorphe sur le demi-plan de Poincaré et vérifiant les relations de modularité.} de poids \(k-2j\).

Montrons que les \(a_j\) sont holomorphes sur \(\pk\) et en l'infini. L'égalité entre le développement de Laurent
\[
\sum_{n=0}^{+\infty}A_n(\tau)z^{2n-2n_0}
\]
de \(z \mapsto f(\tau, z)\) en 0 et l'égalité~\eqref{eq_poljacpair} conduit grâce à~\eqref{eq_delLaurent} à 
\[
\sum_{n=0}^{+\infty}A_n(\tau)z^{2n}=\sum_{j=0}^{n_0}a_j(\tau)\left(\sum_{r=0}^{+\infty}\epsilon_r(\tau)z^{2r}\right)^jz^{2n_0-2j}
\]
les fonctions \(\epsilon_r\) holomorphes sur \(\pk\) étant définies par \(\epsilon_0=1\), \(\epsilon_1=0\) et \(\epsilon_r=(2r-1)\eisen_{2r}\) si \(r\geq 2\). On en déduit
\[
A_r=a_{n_0-r}+\sum_{j=n_0-r+1}^{n_0}a_j\sum_{\alpha_1+\dotsm+\alpha_j=r+j-n_0}\epsilon_{\alpha_1}\dotsm\epsilon_{\alpha_j}.
\]
Par récurrence, on obtient que les fonctions \(a_j\) sont holomorphes sur \(\pk\) et en l'infini.

Finalement, les fonctions \(a_j\) sont des formes modulaires donc des éléments de \(\CC[\eisen_4,\eisen_6]\). Ainsi, une forme de Jacobi singulière d'indice nul et de poids pair est un élément de \(\CC[\eisen_4,\eisen_6,\wp]\subset\CC[\eisen_4,\wp,(\dz\wp)^2]\subset\CC[\eisen_4,\wp,\dz\wp]\).

\paragraph{Cas du poids impair} Si \(k\) est impair, alors \(f\dz\wp\) est une forme de Jacobi singulière d'indice nul et de poids pair \(k+3\). On en déduit que \(f\dz\wp\in\CC[\eisen_4,\wp,(\dz\wp)^2]\) puis qu'il existe des polynômes \(P\) et \(Q\) tels que
\[
f=\frac{1}{\dz\wp}P(\eisen_4,\wp)+Q(\eisen_4,\wp,(\dz\wp)^2)\dz\wp.
\]
Pour tout \(\tau \in \pk\), \(z \mapsto f(\tau, z)\) n'a pas de pôle en \(z=\tau/2\), donc \[P\left(\eisen_4,\widetilde{\wp}\right)=0.\]  Par indépendance algébrique de \(\eisen_4\) et \(\widetilde{\wp}\), le polynôme \(P\) est nul. Ainsi \(f \in \CC[\wp,\dz\wp,\eisen_4]\).
\end{proof}

\subsubsection{Dimension des espaces \texorpdfstring{\(\AlgJS_k\)}{de formes de Jacobi singulières}}
Pour tout entier \(k \geq 0\), une base de l'espace \(\AlgJS_k\) est 
\begin{equation}\label{eq_base}
\left\{\wp^a(\dz\wp)^b\eisen_4^c \colon (a,b,c)\in\N^3, 2a+3b+4c=k\right\}. 
\end{equation}
L'équation \(2a+3b+4c=k\) équivaut à l'équation \(4a+6b=2k-8c\) et puisque l'algèbre \(\CC[\eisen_4, \eisen_6]\) des formes modulaires pour \(\sldz\) est engendrée par une fonction de poids \(4\) et une de poids \(6\), on en déduit que
\begin{equation}\label{eq_frommodular}
\dim\JS{k}=\sum_{c=0}^{\Ent{k/4}}\fd(2k-8c)
\end{equation}
où pour tout \(j\in\N\) on a noté \(\fd(j)\) la dimension de l'espace des formes modulaires de poids \(j\), explicitement donnée par
\begin{equation}\label{eq_d}
\fd(j)=\Ent*{\frac{j}{12}}+\begin{cases*}
0 & si \(12\) divise \(j-2\) \\
1 & sinon. 
\end{cases*}
\end{equation}
Même s'il n'existe pas de formes modulaires de poids négatifs, et que \(\fd(j)\) devrait être choisi nul pour \(j<0\), on prend une autre convention pour mener la suite des calculs qui ne font pas intervenir l'aspect modulaire de \(\fd\) mais seulement son aspect combinatoire. On étend la définition de \(\fd\) par~\eqref{eq_d} à \(\Z\) tout entier. On a alors \(\fd(j+12)=\fd(j)+1\) pour tout \(j\in\Z\). 

Soit \(x\) un réel, on note \(\projZ{x}\) l'entier le plus proche de \(x\) (avec la convention \(\projZ{n+1/2}=n\) pour tout \(n \in \Z\)). 
\begin{prop}\label{prop_dimJS}
Pour tout entier naturel \(k\), la dimension \(\fds(k)\) de l'espace des formes de Jacobi singulières d'indice nul et de poids \(k\) est donnée par 
\begin{equation}\label{eq_dimexplicit}
\fds(k)=\dim\JS{k}=
\projZ*{\frac{\left(k+3\olddelta(k)\right)^2}{48}}\qquad\text{avec}\qquad\olddelta(k)=\begin{dcases*} 1 & si \(k\) est impair\\ 2 & sinon \end{dcases*}
\end{equation}
La série génératrice de ces dimensions est
\[
\sum_{k\in\N}\fds(k)\cdot z^k=\frac{1}{(1-z^2)(1-z^3)(1-z^4)}
\]
et on a les relations de récurrence : 
\[
\fds(2k+3)=\fds(2k)\qquad\text{et}\qquad\fds(2k+13)=\fds(2k+1)+k+5
\]
pour tout entier \(k\). Les premières valeurs sont données par
\[
\begin{array}{|c||c|c|c|c|c|c|c|c|}
\hline
k & 0 & 1 & 2  & 4 & 6 & 8 & 10 & 12\\
\hline
\fds(k) & 1 & 0 & 1 & 2 & 3 & 4 & 5 & 7\\
\hline
\end{array}
\]

\end{prop}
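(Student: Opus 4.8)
The plan is to turn the dimension count into an elementary lattice-point problem via the explicit basis. By Theorem~\ref{prop_strucJS} and the basis~\eqref{eq_base}, $\fds(k)=\dim\JS{k}$ equals the number of triples $(a,b,c)\in\N^3$ with $2a+3b+4c=k$, so expanding the three geometric series gives the generating series at once:
\[
\sum_{k\in\N}\fds(k)z^k=\sum_{(a,b,c)\in\N^3}z^{2a+3b+4c}=\frac{1}{(1-z^2)(1-z^3)(1-z^4)}.
\]
First I would peel off one variable: in $2a+3b+4c=k$ the summand $3b$ has the parity of $k$, so $b$ is even when $k$ is even and odd when $k$ is odd. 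Writing $b=2\beta$, respectively $b=2\beta+1$, and dividing the equation by $2$ produces bijections establishing
\[
\fds(2\ell)=M(\ell)\qquad\text{and}\qquad\fds(2\ell+1)=M(\ell-1),
\]
where $M(n)$ is the number of $(x,y,z)\in\N^3$ with $x+2y+3z=n$ (with $M(n)=0$ for $n<0$), i.e.\ the number of partitions of $n$ into parts at most $3$.

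The heart of the proof is then the classical closed form $M(n)=\projZ*{(n+3)^2/12}$, which I would prove for all $n\geq-3$. As no square is $\equiv6\pmod{12}$, the number $(n+3)^2/12$ is never a half-integer, so the rounding is unambiguous (the convention $\projZ{n+1/2}=n$ is never invoked). I would argue by induction with step $6$: sorting partitions by whether they contain a part equal to $3$ gives $M(n)=M(n-3)+\Ent{n/2}+1$ for $n\geq0$, and a short parity distinction turns this into
\[
M(n+6)=M(n)+n+6\qquad(n\geq-3);
\]
the right-hand side $g(n):=\projZ*{(n+3)^2/12}$ obeys the same recursion because $(n+9)^2-(n+3)^2=12(n+6)$. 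Checking the six consecutive base values $n=-3,-2,-1,0,1,2$ (namely $0,0,0,1,1,2$) closes the induction.

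It remains to reassemble. When $k=2\ell$, $\olddelta(k)=2$ and, as $(2\ell+6)^2/48$ is literally $(\ell+3)^2/12$, we get $\fds(k)=M(\ell)=\projZ*{(\ell+3)^2/12}=\projZ*{(k+3\olddelta(k))^2/48}$; when $k=2\ell+1$, $\olddelta(k)=1$ and likewise $\fds(k)=M(\ell-1)=\projZ*{(\ell+2)^2/12}=\projZ*{(k+3\olddelta(k))^2/48}$, which is the stated formula. The two recursions then fall out of the reduction step: from $2k+3=2(k+1)+1$ we obtain $\fds(2k+3)=M(k)=\fds(2k)$, and from $\fds(2k+13)=M(k+5)$ and $\fds(2k+1)=M(k-1)$ the step-$6$ relation yields $\fds(2k+13)-\fds(2k+1)=M(k+5)-M(k-1)=k+5$; the table of small values is a direct substitution. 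The only delicate point is the middle step: getting the rounding in the count of partitions into parts $\leq3$ exactly right, and fixing the precise range in which the closed form holds so that the two recursions come out cleanly. Everything else is routine bookkeeping on top of Theorem~\ref{prop_strucJS}.
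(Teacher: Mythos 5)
Your proof is correct, and it reaches the closed formula by a genuinely different route than the paper. Both arguments start identically (the basis of Theorem~\ref{prop_strucJS} gives the generating series, and the relation \(\fds(2k+3)=\fds(2k)\) comes out either from the series or, in your case, from the parity bijection on \(b\)). The divergence is in how \eqref{eq_dimexplicit} is established. The paper stays inside the theory of modular forms: it uses the decomposition \eqref{eq_frommodular} of \(\fds(k)\) as a sum of dimensions \(\fd(2k-8c)\) of spaces of modular forms, extends \(\fd\) artificially to \(\Z\) to get the step-\(12\) recurrence \(\fds(2k+13)=\fds(2k+1)+k+5\), observes that \(\projZ{\varphi}\) with \(\varphi(k)=(k+3\olddelta(k))^2/48\) satisfies the same two recurrences, and closes with the base cases \(k\in\{0,1,2,4,6,8,10,12\}\). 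You instead reduce everything to the count \(M(n)\) of partitions of \(n\) into parts at most \(3\) via \(\fds(2\ell)=M(\ell)\), \(\fds(2\ell+1)=M(\ell-1)\), and prove the classical identity \(M(n)=\projZ*{(n+3)^2/12}\) by a self-contained step-\(6\) induction; the two stated recurrences and the table then drop out of this reduction. Your route is purely combinatorial, needs no input from the dimension formula for modular forms, and makes explicit the link to partitions into at most three parts that the paper only mentions afterwards through the Alcuin sequence; the paper's route gets the step-\(12\) recurrence with less bookkeeping by leaning on known modular-forms dimensions. Your attention to the rounding convention (no square is \(\equiv 6 \pmod{12}\), so \(\projZ{\,\cdot\,}\) is never evaluated at a half-integer) is a point the paper leaves implicit.
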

\begin{proof}
En comptant les éléments de la base~\eqref{eq_base} de \(\JS{k}\), on trouve que la série génératrice de \(\fds\) est
\[
\sum_{k\in\N}\#\left\{(a,b,c)\in\N^3\colon k=2a+3b+4c\right\}z^k=\sum_{a\in\N}z^{2a}\sum_{b\in\N}z^{3b}\sum_{c\in\N}z^{4c}=\frac{1}{(1-z^2)(1-z^3)(1-z^4)}.
\]
On en déduit ensuite
\[
\sum_{k\in\N}\fds(2k)z^k=\frac{1}{(1-z)^3(1+z)(1+z+z^2)}
\]
puis
\[
\sum_{k\in\N}\fds(2k+1)z^k=\frac{z}{(1-z)^3(1+z)(1+z+z^2)}. 
\]
On en tire immédiatement que
\begin{equation}\label{eq_pairimpair}
\fds(2k)=\fds(2k+3)
\end{equation}
pour tout entier \(k\). Compte-tenu de~\eqref{eq_frommodular} et de l'extension à \(\Z\) de~\eqref{eq_d}, on a
\[
\fds(2k+13)=\fds(2k+1)+\sum_{c=\Ent*{\frac{2k+1}{4}}+1}^{\Ent*{\frac{2k+1}{4}}+3}\fd(4k-8c+2)+2\Ent*{\frac{2k+1}{4}}+8. 
\]
On en déduit la deuxième relation de récurrence :
\begin{equation}\label{eq_impairimpair}
\fds(2k+13)=\fds(2k+1)+k+5
\end{equation}
pour tout entier \(k\). 

L'application \(\varphi : k\mapsto\frac{\left(k+3\olddelta(k)\right)^2}{48}\) vérifie elle aussi les relations~\eqref{eq_pairimpair} et~\eqref{eq_impairimpair}, il est est donc de même de \(\projZ{\varphi}\).  On conclut que
\(\displaystyle\fds(k)=
\projZ*{\frac{\left(k+3\olddelta(k)\right)^2}{48}}\)
pour tout entier \(k\), en comparant les valeurs pour \(k\in\{0,1,2,4,6,8,10,12\}\),
\end{proof}
\begin{rem}
On déduit de la série génératrice de \(\left(\fds(k)\right)_{k\in\N}\) que cette suite est \(\left(t(k+3)\right)_{k\in\N}\) où \(t\) est la suite d'Alcuin~\cite{AlcuinSeq}. La formule explicite est alors démontrée dans~\cite[\texten{Theorem}~1]{zbMATH06071095}. Les équations~\eqref{eq_pairimpair} et~\eqref{eq_impairimpair}  sont données dans ce contexte dans~\cite{zbMATH03646988} et démontrées dans~\cite{zbMATH03654914}.
\end{rem}
\begin{rem}
On peut obtenir de façon systématique des formules similaires pour les dimensions des espaces considérés dans ce texte. Une discussion sur ces formules est proposée en annexe. 
\end{rem}
\subsubsection{Application à l'équation différentielle de la fonction de Weierstra\ss{}}
La forme modulaire \(\eisen_6\) est une forme de Jacobi singulière d'indice nul et de poids \(6\). La dimension de \(\JS{6}\) est \(3\), une base étant  \(\left((\dz\wp)^2,\wp^3,\wp\eisen_4\right)\). Ainsi \(\eisen_6\) est combinaison linéaire de ces trois fonctions. En identifiant les termes en \(z^{-6}\), \(z^{-2}\) et \(z^0\) du développement de Laurent en \(z=0\), on obtient :
\begin{equation}\label{eq_diffwp}
\eisen_6=-\frac{1}{140}(\dz\wp)^2+\frac{1}{35}\wp^3-\frac{3}{7}\wp\eisen_4. 
\end{equation}
On retrouve ainsi l'équation différentielle de la fonction \(\wp\) de Weierstra\ss{} au cœur de la théorie des courbes elliptiques\cite[\texten{Theorem}~V.3.4]{zbMATH05500775}. 

\section{Formes singulières quasi-Jacobi d'indice nul}\label{sec_trois}
\subsection{Action et dérivations}
L'action de \(\Jac\) sur \(\pkC\) est donnée par l'application \(\AC\) : 
\begin{equation}\label{eq_defAC}
\begin{array}{ccccc}
\AC	& \colon	& \Jac	& \to		&	\left(\pkC\right)^{\pkC}\\
	&		& A=(g, \Lambda)=\left(\begin{pmatrix}a & b\\ c & d\end{pmatrix},(\lambda,\mu)\right)		& \mapsto	&	\begin{array}{ccc}
									\pkC		& \to		& \pkC\\
									(\tau,z)	& \mapsto	& A\cdot(\tau,z)=\left(\frac{a\tau+b}{c\tau+d},\frac{z+\lambda\tau+\mu}{c\tau+d}\right). 
								\end{array}	 
\end{array}								
\end{equation}
Par définition d'une action, on a
\begin{equation}\label{eq_Acac}
\AC(AB)=\AC(A)\circ\AC(B).
\end{equation}
On calcule
\begin{equation}\label{eq_derivH}
\frac{\partial\AC}{\partial\tau}=\left(\frac{1}{\J^2},-\frac{\Y}{\J}\right)\quad\text{et}\quad\frac{\partial\AC}{\partial z}=\left(0,\frac{1}{\J}\right)
\end{equation}
avec
\[
\begin{array}{ccccc}
\J	& \colon	& \Jac	& \to		&	\fonc\\
	&		& \left(\begin{psmallmatrix}a & b\\c & d\end{psmallmatrix},(\lambda,\mu)\right)		& \mapsto	&	\begin{array}{ccc}
									\pkC		& \to		& \CC\\
									(\tau,z)	& \mapsto	& c\tau+d
								\end{array}	 
\end{array}								
\]
et
\[
\begin{array}{ccccc}
\Y	& \colon	& \Jac	& \to		&	\fonc\\
	&		& \left(\begin{psmallmatrix}a & b\\c & d\end{psmallmatrix},(\lambda,\mu)\right)		& \mapsto	&	\begin{array}{ccc}
									\pkC		& \to		& \CC\\
									(\tau,z)	& \mapsto	& \dfrac{cz+c\mu -d\lambda}{c\tau+d}. 
								\end{array}	 
\end{array}								
\]
En définissant
\[
\begin{array}{ccccc}
\X	& \colon	& \Jac	& \to		&	\fonc\\
	&		& \left(\begin{psmallmatrix}a & b\\c & d\end{psmallmatrix},(\lambda,\mu)\right)		& \mapsto	&	\begin{array}{ccc}
									\pkC		& \to		& \CC\\
									(\tau,z)	& \mapsto	& \dfrac{c}{c\tau+d}
								\end{array}	 
\end{array}								
\]
on a 
\begin{subequations}\label{eq_derivees}
\begin{alignat}{5}
\frac{\partial\J}{\partial\tau}	& =\X\J	&\qquad\qquad& &\frac{\partial\Y}{\partial\tau}	& =-\X\Y	&\qquad\qquad& & \frac{\partial\X}{\partial\tau}	&=-\X^2\label{eq_deriveestau}\\
\frac{\partial\J}{\partial z}		&=0			&& &\frac{\partial\Y}{\partial z}	&=\X	&& &\frac{\partial\X}{\partial z}	&=0.\label{eq_deriveesz}
\end{alignat}
\end{subequations}
Il est clair que les applications \(\J\), \(X\) et \(\Y\) sont algébriquement indépendantes sur \(\CC\).

Il résulte de \eqref{eq_derivees} que l'algèbre \(\CC[\J,\X,\Y]\) est donc stable par les dérivations en \(\tau\) et \(z\). 
La preuve de la proposition suivante montre que la notion de cocycle permet d'appréhender les dérivations de l'action. 
\begin{prop}\label{prop_JXYcocycles}

On a, pour les applications \(\J\), \(\X\) et \(\Y\) et l'action définie en \eqref{eq_vertkm}, les relations de 1-cocyclicité suivantes : \(\forall(A, B)\in\left(\sldz\ltimes\Z^2\right)^2 \)
\[ 
\J(AB)=\left(\J(A)\vert_{0,0}B\right)\J(B), \quad \Y(AB)=\Y(A)\vert_{1,0}B+\Y(B),\quad \X(AB)=\X(A)\vert_{2,0}B+\X(B).
\]
\end{prop}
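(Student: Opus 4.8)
The plan is to deduce the three identities from the basic compatibility \eqref{eq_Acac}, namely \(\AC(AB)=\AC(A)\circ\AC(B)\), together with the derivative formulas \eqref{eq_derivH}, rather than computing everything by brute force. First I would note that, since the index is zero, the weight factor \(\fe^{0}\) in \eqref{eq_vertkm} is \(1\), so for every \(f\in\fonc\) and every \(B\in\Jac\) one has
\[
f\vert_{k,0}B=\J(B)^{-k}\,\bigl(f\circ\AC(B)\bigr),
\]
\(\J(B)\) depending only on the \(\sldz\)-component of \(B\). Hence the three assertions to be proved read, with \(\vert\) always the index-zero action,
\[
\J(AB)=\bigl(\J(A)\circ\AC(B)\bigr)\J(B),\qquad
\Y(AB)=\J(B)^{-1}\bigl(\Y(A)\circ\AC(B)\bigr)+\Y(B),
\]
\[
\X(AB)=\J(B)^{-2}\bigl(\X(A)\circ\AC(B)\bigr)+\X(B).
\]

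The relation for \(\J\) is the classical automorphy cocycle and I would check it directly: writing \(A=(g,\Lambda)\), \(B=(g',\Lambda')\) with \(g=\begin{psmallmatrix}a&b\\c&d\end{psmallmatrix}\), \(g'=\begin{psmallmatrix}a'&b'\\c'&d'\end{psmallmatrix}\), the bottom row of \(gg'\) is \((ca'+dc',\,cb'+dd')\), so
\[
\J(AB)(\tau,z)=(ca'+dc')\tau+(cb'+dd')=\Bigl(c\,\tfrac{a'\tau+b'}{c'\tau+d'}+d\Bigr)(c'\tau+d'),
\]
which is precisely \(\bigl(\J(A)\circ\AC(B)\bigr)(\tau,z)\cdot\J(B)(\tau,z)\).

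For \(\Y\), I would differentiate \(\AC(AB)=\AC(A)\circ\AC(B)\) with respect to \(\tau\) and compare second components. By the two-variable chain rule and \eqref{eq_derivH} (used at \(A\), the outer Jacobian being evaluated at \(\AC(B)\), and the inner \(\tau\)-derivatives of the two coordinates of \(\AC(B)\) being \(\J(B)^{-2}\) and \(-\Y(B)\J(B)^{-1}\), again by \eqref{eq_derivH}), the second component of \(\frac{\partial}{\partial\tau}\bigl(\AC(A)\circ\AC(B)\bigr)\) equals
\[
-\frac{\Y(A)\circ\AC(B)}{\bigl(\J(A)\circ\AC(B)\bigr)\J(B)^{2}}-\frac{\Y(B)}{\bigl(\J(A)\circ\AC(B)\bigr)\J(B)},
\]
whereas that of \(\frac{\partial}{\partial\tau}\AC(AB)\) is \(-\Y(AB)/\J(AB)\) by \eqref{eq_derivH} at \(AB\). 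Substituting the \(\J\)-cocycle identity \(\J(AB)=\bigl(\J(A)\circ\AC(B)\bigr)\J(B)\) and clearing denominators yields \(\Y(AB)=\J(B)^{-1}\bigl(\Y(A)\circ\AC(B)\bigr)+\Y(B)\), i.e. \(\Y(AB)=\Y(A)\vert_{1,0}B+\Y(B)\).

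Finally, for \(\X\) I would differentiate the just-obtained identity for \(\Y\) with respect to \(z\). Since \(\partial\J/\partial z=0\) and \(\partial\Y/\partial z=\X\) by \eqref{eq_derivees}, and since the chain rule together with \eqref{eq_derivH} gives \(\frac{\partial}{\partial z}\bigl(\Y(A)\circ\AC(B)\bigr)=\bigl(\X(A)\circ\AC(B)\bigr)\J(B)^{-1}\) (the first-coordinate contribution vanishing because the first coordinate of \(\AC(B)\) does not depend on \(z\)), differentiating produces \(\X(AB)=\J(B)^{-2}\bigl(\X(A)\circ\AC(B)\bigr)+\X(B)=\X(A)\vert_{2,0}B+\X(B)\). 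The one place calling for care — the only spot where a sign or a factor could go wrong — is the two-variable chain rule bookkeeping: one must stay consistent about which coordinate of \(\AC(B)\) is being differentiated and pair each factor with the matching entry of \eqref{eq_derivH} and \eqref{eq_derivees}. As a cross-check, all three identities can alternatively be verified by substituting \(AB=(gg',\,\Lambda g'+\Lambda')\) straight into the definitions of \(\J\), \(\X\), \(\Y\) and simplifying using \(\det g'=1\).
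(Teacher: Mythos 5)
Your proposal is correct and follows essentially the same route as the paper: differentiate \(\AC(AB)=\AC(A)\circ\AC(B)\) in \(\tau\) and compare second components to get the \(\Y\)-cocycle, then differentiate that identity in \(z\) to get the \(\X\)-cocycle. The only (harmless) differences are that you spell out the classical \(\J\)-cocycle computation, which the paper dismisses as well known, and you add the useful preliminary observation that the index-zero actions reduce to \(f\vert_{k,0}B=\J(B)^{-k}(f\circ\AC(B))\).
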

\begin{proof}
La première relation sur \(\J\) est bien connue et facile à vérifier. 
Pour la deuxième formule, dérivons~\eqref{eq_Acac} par rapport à \(\tau\). En notant \(\AC=(\AC_1,\AC_2)\), on trouve :
\[
\frac{1}{\J(AB)}\left(\frac{1}{\J(AB)},-\Y(AB)\right)=\frac{\partial\AC(A)}{\partial\tau}\left(\AC(B)\right)\frac{\partial\AC_1(B)}{\partial\tau}+\frac{\partial\AC(A)}{\partial z}\left(\AC(B)\right)\frac{\partial\AC_2(B)}{\partial\tau}
\]
ce qui, en utilisant~\eqref{eq_derivH}, conduit à
\[
\left(\frac{1}{\J(AB)(x)^2},-\frac{\Y(AB)(x)}{\J(AB)(x)}\right)=\left(\frac{1}{\J(A)(Bx)^2\J(B)(x)^2},-\frac{\Y(A)(Bx)}{\J(A)(Bx)\J(B)(x)^2}-\frac{\Y(B)(x)}{\J(A)(Bx)\J(B)(x)}\right)
\]
où l'on a noté \(x=(\tau,z)\). En comparant les deuxièmes coordonnées et en utilisant la formule précédente, on obtient 
\( \Y(AB)(x)=\J(B)(x)^{-1}\Y(A)(Bx)+\Y(B)(x)\)
ce qui montre la relation voulue.

Dérivons ensuite par rapport à \(z\) la relation de cocyclicité de \(\Y\), on trouve
\[
\frac{\partial\Y(AB)}{\partial z}(x)=\frac{1}{\J(B)(x)}\frac{\partial\Y(A)}{\partial\tau}(Bx)\frac{\partial\AC_1(B)}{\partial z}(x)+\frac{1}{\J(B)(x)}\frac{\partial\Y(A)}{\partial z}(Bx)\frac{\partial\AC_2(B)}{\partial z}(x)+\frac{\partial\Y(B)}{\partial z}(x).
\]
Grâce à~\eqref{eq_deriveesz} et~\eqref{eq_derivH}, on en déduit
\[
\X(AB)(x)=\J(B)(x)^{-2}\X(A)(Bx)+\X(B)(x). 
\]
C'est la relation de cocyclicité de \(\X\). 
\end{proof}

\subsection{Définition}
\begin{dfn}\label{dfn_JS}
Une fonction singulière \(f\colon\pkC\to\CC\) est une forme \emph{singulière quasi-Jacobi} (d'indice nul), de poids \(k\in\N\) et de profondeur \((s_1,s_2)\in\N^2\) s'il existe \(\left(f_{j_1,j_2}\right)_{\substack{0\leq j_1\leq s_1\\ 0\leq j_2\leq s_2}}\in\Sing^{(s_1+1)(s_2+1)}\) telle que
\begin{equation}\label{eq_trsfJac}
\forall A\in\Jac\qquad f\vert_{k,0}A=\sum_{j_1=0}^{s_1}\sum_{j_2=0}^{s_2}f_{j_1,j_2}\X(A)^{j_1}\Y(A)^{j_2}. 
\end{equation}
où \(f_{s_1, s_2}\) est non identiquement nulle. On convient de noter désormais \(f\vert_kA :=f\vert_{k, 0}A\), on ne considérera que des formes d'indice nul.
Il résulte de l'indépendance algébrique de \(\X\) et \(\Y\) sur \(\CC\) que la décomposition \eqref{eq_trsfJac} est unique. On note alors \(\cQ_{j_1,j_2}(f)=f_{j_1,j_2}\), et on appelle \(s_1\) la \emph{profondeur modulaire} de \(f\) et \(s_2\) sa \emph{profondeur elliptique}.  L'espace vectoriel des formes singulières quasi-Jacobi de poids \(k\) et profondeurs inférieures ou égales à \(s_1\) et \(s_2\) est noté \(\QJS{k}{s_1}{s_2}\) ; l'espace vectoriel des formes singulières quasi-Jacobi de poids \(k\) est noté \(\QJSpoids{k}\). 
\end{dfn}

\AjoutRouge{
\begin{rem}
La notion introduite ici est cohérente avec celle de~\cite[\S2.4]{zbMATH07634708}. Alors que cette article expose une approche \emph{via} la notion de \emph{forme presque Jacobi}, nous privilégions une approche directe. 
\end{rem}
}

\begin{rem}\label{jacobisinguliereprofnulle}\leavevmode
Le choix \(A=\left(\begin{psmallmatrix}1 & 0\\0 & 1\end{psmallmatrix},(0,0)\right)\) implique que \(\cQ_{0,0}(f)=f\). Cela entra\^\i ne en particulier que \(\QJS{k}{0}{0}\) est l'espace \(\JS{k}\) des formes de Jacobi singulières d'indice nul et de poids \(k\) rencontrées précédemment. 

\end{rem}
\begin{rem}
\begin{itemize}
 \item 
Soit \(f \in \QJSpoids{k}\) et \(g \in \QJSpoids{\ell}\), alors on a \(fg \in \QJSpoids{k+\ell}\) et 
\[ 
\cQ_{i,j}(fg)=\sum_{\substack{(\alpha,\beta,\gamma,\delta)\\\alpha+\beta=i\\\gamma+\delta=j}}\cQ_{\alpha,\gamma}(f)\cQ_{\beta,\delta}(g). 
\]
\item Il résulte de l'indépendance algébrique de \(\X, \Y\) et \(\J\) sur \(\CC\) que les espaces \(\QJSpoids{k}\) sont en somme directe. On peut donc considérer l'algèbre graduée par le poids \(\displaystyle\AlgQJS=\bigoplus_{k \in \N}\QJSpoids{k}\), qu'on conviendra d'appeler algèbre des formes singulières quasi-Jacobi.
\end{itemize}
\end{rem}

\subsection{Stabilité par dérivation}
La dérivation par rapport à \(z\) est nulle sur l'algèbre \(\AlgM\) des formes modulaires. En revanche, \(\AlgM\) n'est pas stable par dérivation par rapport à \(\tau\), ce qui justifie l'introduction de l'algèbre \(\AlgQM\) des formes quasimodulaires\cite{zbMATH06128504,zbMATH05050117}. 

L'algèbre \(\AlgJS\) des formes de Jacobi singulières est stable par dérivation par rapport à \(z\) mais n'est pas stable par dérivation par rapport à \(\tau\) (comme on le verra plus tard, voir remarque~\vref{rem_profdz}, \eqref{eq_ddeuxzrho} et  \eqref{eq_dtaudzrho}). Nous montrons ici que l'algèbre \(\AlgQJS\) est stable par chacune de ces dérivations. 
\begin{lem}\label{lem_derivAC}
Soit \(f\colon\pkC\to\CC\) dérivable par rapport à chaque variable, alors
\begin{equation}\label{eq_derivACz}
\frac{\partial\left(f\vert_kA\right)}{\partial z}=\left.\left(\frac{\partial f}{\partial z}\right)\right\rvert_{k+1} A
\end{equation}
et
\begin{equation}\label{eq_derivACtau}
\frac{\partial\left(f\vert_kA\right)}{\partial\tau}=-k\left(f\vert_kA\right)\X(A)+\left.\left(\frac{\partial f}{\partial\tau}\right)\right\rvert_{k+2} A-\Y(A)\left.\left(\frac{\partial f}{\partial z}\right)\right\rvert_{k+1} A. 
\end{equation}
\end{lem}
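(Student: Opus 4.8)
The plan is to reduce both identities to the chain rule by first recording the action~\eqref{eq_vertkm}, in the index-zero case, in the compact form
\[
f\vert_k A=\J(A)^{-k}\,\bigl(f\circ\AC(A)\bigr),
\]
where $\AC(A)\colon\pkC\to\pkC$ is the map of~\eqref{eq_defAC} and $\J(A)(\tau,z)=c\tau+d$. All the ingredients are already available: the partial derivatives of $\AC(A)$ are given by~\eqref{eq_derivH}, namely $\partial\AC/\partial\tau=(\J^{-2},-\Y\J^{-1})$ and $\partial\AC/\partial z=(0,\J^{-1})$, and those of $\J$ by~\eqref{eq_derivees}, namely $\partial\J/\partial\tau=\X\J$ and $\partial\J/\partial z=0$.

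First I would treat~\eqref{eq_derivACz}. Since $\partial\J(A)/\partial z=0$, differentiating the product above in $z$ only affects the second factor; the chain rule together with $\partial\AC(A)/\partial z=(0,\J(A)^{-1})$ gives
\[
\frac{\partial\left(f\vert_kA\right)}{\partial z}=\J(A)^{-k}\cdot\J(A)^{-1}\left(\frac{\partial f}{\partial z}\right)\!\bigl(\AC(A)\bigr)=\J(A)^{-(k+1)}\left(\frac{\partial f}{\partial z}\right)\!\bigl(\AC(A)\bigr),
\]
which is exactly $\left.(\partial f/\partial z)\right\rvert_{k+1}A$.

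Then I would treat~\eqref{eq_derivACtau}. Differentiating $\J(A)^{-k}\bigl(f\circ\AC(A)\bigr)$ in $\tau$ produces two terms. The term coming from $\partial(\J(A)^{-k})/\partial\tau$ equals $-k\J(A)^{-k-1}\bigl(\partial\J(A)/\partial\tau\bigr)\bigl(f\circ\AC(A)\bigr)=-k\X(A)\J(A)^{-k}\bigl(f\circ\AC(A)\bigr)=-k\bigl(f\vert_kA\bigr)\X(A)$, using $\partial\J/\partial\tau=\X\J$. The term coming from $\partial(f\circ\AC(A))/\partial\tau$ is handled by the chain rule with $\partial\AC(A)/\partial\tau=(\J(A)^{-2},-\Y(A)\J(A)^{-1})$: it equals $\J(A)^{-2}(\partial f/\partial\tau)(\AC(A))-\Y(A)\J(A)^{-1}(\partial f/\partial z)(\AC(A))$, and multiplying by $\J(A)^{-k}$ turns these two summands into $\left.(\partial f/\partial\tau)\right\rvert_{k+2}A$ and $-\Y(A)\left.(\partial f/\partial z)\right\rvert_{k+1}A$ respectively. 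Adding the two contributions yields~\eqref{eq_derivACtau}.

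There is no serious obstacle here: the statement is a bookkeeping identity, and the only point requiring a little care is tracking how the weight shifts — one reads off $k\mapsto k+1$ from the single factor $\J(A)^{-1}$ produced by $\partial\AC/\partial z$, and $k\mapsto k+2$ from the factor $\J(A)^{-2}$ produced by the first coordinate of $\partial\AC/\partial\tau$, while the inhomogeneous term $-k\bigl(f\vert_kA\bigr)\X(A)$ is precisely the failure of the factor $\J(A)^{-k}$ to be annihilated by $\partial/\partial\tau$.
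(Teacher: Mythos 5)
Your proof is correct and follows exactly the paper's own argument: differentiate the identity \(f\vert_kA=\J(A)^{-k}f\bigl(\AC(A)\bigr)\) with respect to \(z\) and \(\tau\), then apply \eqref{eq_derivH} and \eqref{eq_derivees}. You have simply written out the chain-rule bookkeeping that the paper leaves implicit.
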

\begin{proof}
Le résultat est obtenu en dérivant par rapport à \(z\) et à \(\tau\) la définition \(f\vert_kA=\J(A)^{-k}f\left(\AC(A)\right)\) puis en utilisant~\eqref{eq_derivH} et~\eqref{eq_derivees}. 
\end{proof}
\begin{prop}\label{prop_stabder}
L'algèbre \(\AlgQJS\) est stable par les dérivations en \(z\) et \(\tau\). La dérivation \(\partial/\partial z\) envoie \(\QJS{k}{s_1}{s_2}\) dans \(\QJS{k+1}{s_1+1}{s_2}\) ; la dérivation \(\partial/\partial\tau\) envoie \(\QJS{k}{s_1}{s_2}\) dans \(\QJS{k+2}{s_1+1}{s_2+1}\). D'autre part, pour \(f \in \QJSpoids{k}\),
\[
\cQ_{j_1,j_2}\left(\frac{\partial f}{\partial z}\right)=\frac{\partial\cQ_{j_1,j_2}(f)}{\partial z}+(j_2+1)\cQ_{j_1-1,j_2+1}(f)
\]
et
\[
\cQ_{j_1,j_2}\left(\frac{\partial f}{\partial\tau}\right)=\frac{\partial\cQ_{j_1,j_2}(f)}{\partial\tau}+\frac{\partial\cQ_{j_1,j_2-1}(f)}{\partial z}+(k-j_1+1)\cQ_{j_1-1,j_2}(f).
\]
Plus précisément,
\[
\frac{\partial}{\partial z}\QJS{k}{s_1}{s_2}\subseteq\QJS{k+1}{s_1+1}{s_2-1}+\QJS{k+1}{s_1}{s_2}
\]
et
\[
\frac{\partial}{\partial\tau}\QJS{k}{s_1}{s_2}\subseteq\QJS{k+2}{s_1+1}{s_2}+\QJS{k+2}{s_1}{s_2+1}.
\]
\end{prop}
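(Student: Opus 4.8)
The plan is to read everything off the transformation laws of $\frac{\partial f}{\partial z}$ and $\frac{\partial f}{\partial\tau}$ furnished by Lemma~\ref{lem_derivAC}, together with the derivative identities~\eqref{eq_derivees}. Fix $f\in\QJSpoids{k}$ of depth $(s_1,s_2)$ and write its defining expansion $f\vert_kA=\sum_{j_1,j_2}f_{j_1,j_2}\,\X(A)^{j_1}\Y(A)^{j_2}$, where $f_{j_1,j_2}=\cQ_{j_1,j_2}(f)\in\Sing$ and the indices run over $0\le j_1\le s_1$, $0\le j_2\le s_2$ (with $f_{j_1,j_2}=0$ outside this range).

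First, for $\frac{\partial}{\partial z}$: by~\eqref{eq_derivACz} one has $\frac{\partial(f\vert_kA)}{\partial z}=\bigl(\frac{\partial f}{\partial z}\bigr)\vert_{k+1}A$, while differentiating the expansion term by term, using $\frac{\partial\X}{\partial z}=0$ and $\frac{\partial\Y}{\partial z}=\X$ from~\eqref{eq_deriveesz} and re-indexing the monomials $\X(A)^{j_1}\Y(A)^{j_2}$, gives $\bigl(\frac{\partial f}{\partial z}\bigr)\vert_{k+1}A=\sum_{j_1,j_2}\bigl(\frac{\partial f_{j_1,j_2}}{\partial z}+(j_2+1)f_{j_1-1,j_2+1}\bigr)\X(A)^{j_1}\Y(A)^{j_2}$. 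Each coefficient is again singular — differentiation in $z$ preserves $1$-periodicity in both variables, raises every pole order at the lattice points uniformly by one, and sends the $n$-th Laurent coefficient $A_n$ to $(n+1)A_{n+1}$, which stays holomorphic on $\pk$ and at infinity — so $\frac{\partial f}{\partial z}\in\AlgQJS$, and since the nonzero terms have $j_1\le s_1+1$ and $j_2\le s_2$ we get $\frac{\partial f}{\partial z}\in\QJS{k+1}{s_1+1}{s_2}$ with the announced formula for $\cQ_{j_1,j_2}\bigl(\frac{\partial f}{\partial z}\bigr)$.

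For $\frac{\partial}{\partial\tau}$ I would rewrite~\eqref{eq_derivACtau} as $\bigl(\frac{\partial f}{\partial\tau}\bigr)\vert_{k+2}A=\frac{\partial(f\vert_kA)}{\partial\tau}+k\,(f\vert_kA)\,\X(A)+\Y(A)\,\bigl(\frac{\partial f}{\partial z}\bigr)\vert_{k+1}A$, so that the three terms on the right all carry an explicit expansion in the monomials $\X(A)^{j_1}\Y(A)^{j_2}$: the first by differentiating $f\vert_kA$ term by term with $\frac{\partial\X}{\partial\tau}=-\X^2$ and $\frac{\partial\Y}{\partial\tau}=-\X\Y$ from~\eqref{eq_deriveestau}, the second by multiplying the expansion of $f\vert_kA$ by $\X(A)$, and the third by multiplying the expansion just obtained by $\Y(A)$. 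Collecting the coefficient of $\X(A)^{j_1}\Y(A)^{j_2}$ — the three contributions $-(j_1-1)-j_2$, $+k$ and $+j_2$ to $f_{j_1-1,j_2}$ combine into $k-j_1+1$ — gives $\cQ_{j_1,j_2}\bigl(\frac{\partial f}{\partial\tau}\bigr)=\frac{\partial f_{j_1,j_2}}{\partial\tau}+\frac{\partial f_{j_1,j_2-1}}{\partial z}+(k-j_1+1)f_{j_1-1,j_2}$; the coefficients are singular for the same reasons, and the nonzero terms have $j_1\le s_1+1$, $j_2\le s_2+1$, so $\frac{\partial f}{\partial\tau}\in\QJS{k+2}{s_1+1}{s_2+1}$.

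It remains to sharpen the two inclusions. The observation that makes this possible is that the extreme coefficient always vanishes: $\cQ_{s_1+1,s_2}\bigl(\frac{\partial f}{\partial z}\bigr)=\frac{\partial f_{s_1+1,s_2}}{\partial z}+(s_2+1)f_{s_1,s_2+1}=0$ and likewise $\cQ_{s_1+1,s_2+1}\bigl(\frac{\partial f}{\partial\tau}\bigr)=0$, since $f_{j_1,j_2}=0$ whenever $j_1>s_1$ or $j_2>s_2$; hence the $\cQ$-support of $\frac{\partial f}{\partial z}$ is contained in the staircase $\{(j_1,j_2):j_1\le s_1+1,j_2\le s_2\}\setminus\{(s_1+1,s_2)\}$ and that of $\frac{\partial f}{\partial\tau}$ in the analogous L-shaped region. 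I would then produce the decomposition by subtracting a correction term. For $\frac{\partial}{\partial\tau}$ this is direct: writing $c_1=\cQ_{1,0}(\eisen_2)\neq 0$, the product $c_1^{-1}(k-s_1)\,\eisen_2 f$ lies in $\QJS{k+2}{s_1+1}{s_2}$ and, by the product formula for the $\cQ_{i,j}$, has $\cQ_{s_1+1,j_2}$-coefficient equal to $(k-s_1)f_{s_1,j_2}=\cQ_{s_1+1,j_2}\bigl(\frac{\partial f}{\partial\tau}\bigr)$ for every $j_2$; therefore $\frac{\partial f}{\partial\tau}-c_1^{-1}(k-s_1)\eisen_2 f$ has $\cQ$-support in $\{j_1\le s_1,j_2\le s_2+1\}$, i.e. lies in $\QJS{k+2}{s_1}{s_2+1}$. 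For $\frac{\partial}{\partial z}$ there is no correction of the right weight built from $\eisen_2$ alone, so I would instead induct on the elliptic depth $s_2$: decompose a form of elliptic depth $\le s_2$ as a form of elliptic depth $\le s_2-1$ plus $\Eisen_1$ times a form of elliptic depth $\le s_2-1$, apply the inductive hypothesis to each piece, and treat the cross term using $\frac{\partial\Eisen_1}{\partial z}\in\QJS{2}{1}{0}$ (immediate from the transformation law of $\Eisen_1$ and the $\cQ$-formula for $\frac{\partial}{\partial z}$ just established). The step I expect to be the main obstacle is precisely this reduction: it relies on the structure of the double-depth filtration of $\AlgQJS$ — in effect, on the fact that the top-depth coefficients of a quasi-Jacobi form are again Jacobi, resp. elliptic-type, forms of lower depth, and on the resulting surjectivity of the leading-coefficient maps, realized by multiplying elliptic (resp. elliptic-type) forms by powers of $\eisen_2$ and $\Eisen_1$.
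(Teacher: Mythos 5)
Your derivation of the two \(\cQ\)-formulas is exactly the paper's: differentiate \(f\vert_kA=\sum f_{j_1,j_2}\X(A)^{j_1}\Y(A)^{j_2}\) using Lemma~\ref{lem_derivAC} and~\eqref{eq_derivees}, reindex the monomials, and read off coefficients; your bookkeeping \(-(j_1-1)-j_2+k+j_2=k-j_1+1\) is correct. Where you diverge is on the two refined inclusions. The paper disposes of both in one sentence (the corner coefficient \(\cQ_{s_1+1,s_2}\) vanishes, \emph{donc} the splitting), whereas you rightly point out that corner-vanishing alone does not formally yield a decomposition into two forms with the smaller rectangular supports: one needs to realize the extremal coefficients as leading coefficients of genuine quasi-Jacobi forms. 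For \(\partial/\partial\tau\) your explicit correction \(c_1^{-1}(k-s_1)\eisen_2 f\) with \(c_1=\cQ_{1,0}(\eisen_2)=-2\ic\pi\) is complete and in fact more self-contained than the paper's argument; it only uses that \(\eisen_2\in\QJS{2}{1}{0}\), established in \S~\ref{subsec_qjacfonda} just after this proposition, with no circularity. For \(\partial/\partial z\) you only sketch an induction on elliptic depth whose reduction step \(f=f'+\Eisen_1h\) requires the surjectivity of the leading-coefficient maps, i.e.\ in substance the decomposition \(\AlgQJS=\AlgJS[\eisen_2,\Eisen_1]\) of Theorem~\ref{thm_strucQJ}; since that theorem is proved independently of the present proposition your route is legitimate, but as written it remains a sketch --- resting on exactly the realizability fact that the paper's own one-line deduction also leaves implicit. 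Net: same method for the main statements, a sharper treatment of the \(\tau\)-inclusion, and an honest but unfinished treatment of the \(z\)-inclusion.
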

\begin{proof}
Grâce à~\eqref{eq_derivACz} et à la définition~\ref{dfn_JS}, on trouve
\[
\left.\left(\frac{\partial f}{\partial z}\right)\right\rvert_{k+1} A=\sum_{j_1=0}^{s_1}\sum_{j_2=0}^{s_2}\left(\frac{\partial f_{j_1,j_2}}{\partial z}\X(A)^{j_1}\Y(A)^{j_2}+j_2f_{j_1,j_2}\X(A)^{j_1+1}\Y(A)^{j_2-1}\right). 
\]
On en déduit les résultats relatifs à \(\partial/\partial z\). 

D'autre part, grâce à~\eqref{eq_derivACtau} et à la définition~\ref{dfn_JS}, on trouve
\begin{multline*}
-k\left(f\vert_kA\right)\X(A)+\left.\left(\frac{\partial f}{\partial\tau}\right)\right\rvert_{k+2} A-\Y(A)\left.\left(\frac{\partial f}{\partial z}\right)\right\rvert_{k+1} A=\\
\sum_{j_1=0}^{s_1}\sum_{j_2=0}^{s_2}\left(
\frac{\partial f_{j_1,j_2}}{\partial\tau}\X(A)^{j_1}\Y(A)^{j_2}-j_1f_{j_1,j_2}\X(A)^{j_1+1}\Y(A)^{j_2}-j_2f_{j_1,j_2}\X(A)^{j_1+1}\Y(A)^{j_2}
\right).  
\end{multline*}
En utilisant les résultats relatifs à \(\partial/\partial z\), on trouve alors ceux relatifs à \(\partial/\partial\tau\). 

Si \(f\in\QJS{k}{s_1}{s_2}\), alors \(\frac{\partial f}{\partial z}\in\QJS{k+1}{s_1+1}{s_2}\) mais \(\cQ_{s_1+1,s_2}(\partial f/\partial z)=0\) donc \(\frac{\partial f}{\partial z}\in\QJS{k+1}{s_1+1}{s_2-1}+\QJS{k+1}{s_1}{s_2}\). L'inclusion pour \(\partial/\partial\tau\) se démontre de la même façon. 
\end{proof}
\begin{rem}\label{rem_profdz}
Ainsi, si \(s_2=0\) alors \(\frac{\partial f}{\partial z}\in\QJS{k+1}{s_1}{0}\). En particulier, si \(f \in \AlgJS_k, \dfrac{\partial f}{\partial z} \in \AlgJS_{k+1}\).
\end{rem}
\subsection{Exemples fondamentaux}\label{subsec_qjacfonda}
Les résultats de cette partie sont résumés dans la table~\vref{tab_explefonda}.
\subsubsection{Formes quasimodulaires}\label{subsub_fondafqmod} Comme dit dans le paragraphe \ref{subsec_jacosingexple}, on identifie dans la suite tout fonction \(f : \pk \to \CC\) avec la fonction \(f\colon\pkC\to\CC\) définie par \(f(\tau,z)=f(\tau)\). Via cette identification, toute forme modulaire de poids \(k\) est une forme singulière quasi-Jacobi de poids \(k\) et profondeur \((0,0)\). La dérivée \(n\)-ième (en \(\tau)\) d'une forme modulaire de poids \(k\) est alors une forme singulière quasi-Jacobi de poids \(k+2n\) et profondeur \((n,0)\). De même, \(\eisen_2\) est une forme singulière quasi-Jacobi de poids \(2\) et profondeur \((1,0)\) avec \(\cQ_{1,0}\left(\eisen_2\right)=-2\ic\pi\). L'algèbre des formes quasimodulaires étant engendrée par les formes modulaires \(\eisen_4\) et \(\eisen_6\) et par la forme quasimodulaire \(\eisen_2\), on a donc montré que les formes quasimodulaires sont toutes des formes singulières quasi-Jacobi. 

\subsubsection{La première fonction d'Eisenstein décalée} La série d'Eisenstein décalée de poids \(1\) est la série définie sur \(\pkC\) par
\[
\Eisen_1(\tau,z)=\lim_{M\to+\infty}\sum_{m=-M}^M\left(\lim_{N\to+\infty}\sum_{\substack{n=-N\\ m=0\Rightarrow n\neq 0}}^N\frac{1}{z+m+n\tau}\right)
\]
\cite[\texten{Chapter}~III, \S 2]{zbMATH01236956}. Cette fonction est bien définie et admet un développement en série de Laurent 
\begin{equation}\label{eq_LaurentEun}
\Eisen_1(\tau,z)=\frac{1}{z}-\sum_{n=0}^{+\infty}\eisen_{2n+2}(\tau)z^{2n+1}
\end{equation}
la série convergeant sur tout disque ouvert épointé de centre \(z=0\) de rayon inférieur à \(\abs{\tau}\) \cite[\texten{Chapter}~III, eq. (9)]{zbMATH01236956}. Elle vérifie l'équation :
\[
\forall A\in\Jac\qquad\Eisen_1\vert_1A=\Eisen_1+2\ic\pi\Y(A)
\]
\cite[\texten{Lemma}~1]{hal03132764}\footnote{Dans ce travail, on a noté \(\mathrm{J}_1\) ce que l'on note ici \(\frac{1}{2i\pi}\Eisen_1\).} ; la fonction \(z\mapsto\Eisen_1(\tau,z)\) est méromorphe, ses pôles sont les points du réseau \(\Z+\tau\Z\) et ils sont simples. La fonction \(\Eisen_1\) est donc une forme singulière quasi-Jacobi de poids \(1\) et profondeur \((0,1)\).

\AjoutRouge{
\begin{rem}
\Ajout{Nous utilisons la convention de Weil dans~\cite{zbMATH07634708} qui réserve les lettres majuscules aux fonctions intrinsèquement de deux variables et les lettres minuscules aux fonctions intrinsèquement d'une variable.} La fonction \(\Eisen_1\) et les généralisations obtenues en remplaçant \(z+m+n\tau\) par \((z+m+n\tau)^k\) sont des cas particuliers de ce que Charollois \& Sczech~\cite{zbMATH06696479} appelent séries de Kronecker-Eisenstein. Notre fonction \(\Eisen_1\) est leur fonction \(K^*(z,0,1,\tau)=Ser(0,0,z,\tau)\). 
\end{rem}
}

\begin{table}
\begin{center}
\begin{tabular}{|>{$}c<{$}|>{$}c<{$}|>{$}c<{$}|>{$}c<{$}|}
\hline
\text{Fonction} & \text{Poids} & \begin{tabular}{c}Profondeur \\ \((s_1,s_2)\)\end{tabular} & \cQ_{s_1,s_2}\\
\hline
\wp & 2 & (0,0) & \wp\\
\hline
\dz\wp & 3 & (0,0) & \dz\wp\\
\hline
\eisen_4 & 4 & (0,0) & \eisen_4\\
\hline
\Eisen_1 & 1 & (0,1) & 2\ic\pi\\
\hline
\eisen_2 & 2 & (1,0) & -2\ic\pi\\
\hline
\end{tabular}
\caption{Exemples fondamentaux de formes singulières quasi-Jacobi.}
\label{tab_explefonda}
\end{center}
\end{table}

\begin{lem}\label{lem_indepalgcinq}
Les fonctions \(\wp\), \(\dz\wp\), \(\eisen_4\), \(\Eisen_1\) et \(\eisen_2\) sont algébriquement indépendantes. 
\end{lem}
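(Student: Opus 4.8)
The plan is to leverage the double‑depth (profondeur) filtration together with the algebraic independence of \(\wp\), \(\dz\wp\), \(\eisen_4\) established in Theorem~\ref{prop_strucJS}(1). The ingredients I will use are: \(\wp\), \(\dz\wp\), \(\eisen_4\) are forms of Jacobi singular of index zero, hence of profondeur \((0,0)\), so by Remark~\ref{jacobisinguliereprofnulle} \(\cQ_{0,0}\) is the identity on them and \(\cQ_{i,j}\) vanishes on them for \((i,j)\neq(0,0)\); \(\eisen_2\) has profondeur \((1,0)\) with \(\cQ_{0,0}(\eisen_2)=\eisen_2\), \(\cQ_{1,0}(\eisen_2)=-2\ic\pi\) and all other \(\cQ_{i,j}(\eisen_2)=0\); \(\Eisen_1\) has profondeur \((0,1)\) with \(\cQ_{0,0}(\Eisen_1)=\Eisen_1\), \(\cQ_{0,1}(\Eisen_1)=2\ic\pi\) and all other \(\cQ_{i,j}(\Eisen_1)=0\); and the multiplicativity formula for \(\cQ_{i,j}\) on products recalled just before the statement. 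Iterating that formula (a routine induction on the exponents, using that the \(\cQ\)-support of \(\eisen_2\) sits on the first axis and that of \(\Eisen_1\) on the second) gives, for any \(c\in\AlgJS\) and any \(d,e\in\N\),
\[
\cQ_{i,j}\bigl(c\,\Eisen_1^{\,d}\eisen_2^{\,e}\bigr)=\binom{d}{j}\binom{e}{i}(2\ic\pi)^{j}(-2\ic\pi)^{i}\,c\,\Eisen_1^{\,d-j}\eisen_2^{\,e-i},
\]
which is zero unless \(i\leq e\) and \(j\leq d\), and which equals \((2\ic\pi)^{d}(-2\ic\pi)^{e}\,c\) for \((i,j)=(e,d)\).

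Now argue by contradiction: assume \(P\in\CC[X_1,\dots,X_5]\) is nonzero with \(P(\wp,\dz\wp,\eisen_4,\Eisen_1,\eisen_2)=0\). Since \(\AlgQJS\) is graded by the weight and the spaces \(\QJSpoids{k}\) are in direct sum, it suffices to treat the case where \(P\) is homogeneous for the weighting \(\wp,\dz\wp,\eisen_4,\Eisen_1,\eisen_2\mapsto 2,3,4,1,2\); then only finitely many monomials occur. Regroup the relation as \(\sum_{d,e}c_{d,e}(\wp,\dz\wp,\eisen_4)\,\Eisen_1^{\,d}\eisen_2^{\,e}=0\) with \(c_{d,e}\in\CC[X_1,X_2,X_3]\), and pick a pair \((d_0,e_0)\) with \(c_{d_0,e_0}\neq 0\) maximizing \(d_0+e_0\). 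Applying the (linear) operator \(\cQ_{e_0,d_0}\) to the relation: a term \(c_{d,e}\Eisen_1^{\,d}\eisen_2^{\,e}\) can contribute only if \(e\geq e_0\) and \(d\geq d_0\), hence \(d+e\geq d_0+e_0\); by maximality this forces \(d=d_0\), \(e=e_0\). So \(\cQ_{e_0,d_0}\) isolates exactly one term, giving
\[
0=\cQ_{e_0,d_0}\bigl(P(\wp,\dz\wp,\eisen_4,\Eisen_1,\eisen_2)\bigr)=(2\ic\pi)^{d_0}(-2\ic\pi)^{e_0}\,c_{d_0,e_0}(\wp,\dz\wp,\eisen_4).
\]
As the scalar is nonzero, \(c_{d_0,e_0}(\wp,\dz\wp,\eisen_4)=0\); by Theorem~\ref{prop_strucJS}(1) this forces \(c_{d_0,e_0}=0\) as a polynomial, contradicting the choice of \((d_0,e_0)\).

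The computation of \(\cQ_{i,j}(c\,\Eisen_1^{\,d}\eisen_2^{\,e})\) is routine once the three depth profiles are in hand; the only point needing a little care is the reduction to a single monomial, i.e. the maximality argument on \(d_0+e_0\), which is precisely where one uses that \(\Eisen_1\) contributes only to the elliptic depth and \(\eisen_2\) only to the modular depth, so that their \(\cQ\)-supports lie on complementary axes. No serious obstacle is anticipated beyond organizing this bookkeeping cleanly.
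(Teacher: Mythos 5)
Your proof is correct and follows essentially the same route as the paper's: both reduce a putative polynomial relation to an identity \(\sum_{j_1,j_2} f_{j_1,j_2}\Eisen_1^{j_1}\eisen_2^{j_2}=0\) with coefficients \(f_{j_1,j_2}\in\AlgJS\), isolate the leading coefficient by projecting onto the top \(\X(A)^{i}\Y(A)^{j}\) component of the cocycle expansion (your explicit computation of \(\cQ_{i,j}\) on monomials), and then invoke the algebraic independence of \(\wp\), \(\dz\wp\), \(\eisen_4\) from Theorem~\ref{prop_strucJS}. Your selection of the leading term by maximizing \(d_0+e_0\) is in fact slightly more careful than the paper's \og{}on peut supposer que c'est \(f_{s_1,s_2}\)\fg{}, since it also covers the case where the support of the nonzero coefficients is not a rectangle.
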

\begin{proof}
Grâce au théorème~\ref{prop_strucJS}, il suffit de montrer que si \(k\), \(s_1\) et \(s_2\) sont des entiers et si les \(f_{j_1,j_2}\) sont des formes de Jacobi singulières de poids \(k-j_1-2j_2\) telles que
\begin{equation}\label{eq_uniciprof}
\sum_{j_1=0}^{s_1}\sum_{j_2=0}^{s_2}f_{j_1,j_2}\Eisen_1^{j_1}\eisen_2^{j_2}=0
\end{equation}
alors, les \(f_{j_1,j_2}\) sont toutes nulles. Supposons par l'absurde que l'une est non nulle, on peut supposer que c'est \(f_{s_1,s_2}\). Alors, le terme de gauche de~\eqref{eq_uniciprof} est de profondeur \((s_1,s_2)\). Par unicité de la profondeur, on déduit que \(s_1=s_2=0\) puisque le membre de droite est de profondeur nulle puis la nullité de tous les \(f_{j_1,j_2}\). 
\end{proof}

\subsection{Structure}
La partie~\ref{subsec_qjacfonda} montre \(\CC[\wp,\dz\wp,\eisen_4,\Eisen_1,\eisen_2]\subseteq\AlgQJS\). L'objectif de cette partie est de montrer l'égalité des deux algèbres. 

La preuve repose sur le lemme suivant. 
\begin{lem}\label{lem_coefestforme}
Soit \(f\) une forme singulière quasi-Jacobi de poids \(k\) et profondeur \((s_1,s_2)\). Alors \(\cQ_{s_1,s_2}(f)\) est une forme de Jacobi singulière de poids \(k-2s_1-s_2\). 
\end{lem}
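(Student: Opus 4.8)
The plan is to exploit the cocycle relation $f\vert_k(AB)=(f\vert_k A)\vert_k B$: expanding this identity in two ways, directly via \eqref{eq_trsfJac} at the element $AB$ on one side, and via the $1$-cocycle relations for $\X$ and $\Y$ of Proposition~\ref{prop_JXYcocycles} on the other, and then comparing the top-degree monomials in $\X$ and $\Y$, will exactly produce the transformation law for $\cQ_{s_1,s_2}(f)$.

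Fix $B\in\Jac$ and write $f_{j_1,j_2}=\cQ_{j_1,j_2}(f)$. Applying \eqref{eq_trsfJac} to $AB$ gives
\[
f\vert_k(AB)=\sum_{j_1=0}^{s_1}\sum_{j_2=0}^{s_2}f_{j_1,j_2}\,\X(AB)^{j_1}\Y(AB)^{j_2}.
\]
For the second computation I use that for any function $u$ one has $u\vert_k B=\J(B)^{-k}(u\vert_0 B)$ (immediate from \eqref{eq_vertkm} with $m=0$), that $u\mapsto u\vert_0 B$ is multiplicative, and that Proposition~\ref{prop_JXYcocycles} yields $\X(A)\vert_0 B=\J(B)^2(\X(AB)-\X(B))$ and $\Y(A)\vert_0 B=\J(B)(\Y(AB)-\Y(B))$. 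Starting from $(f\vert_k A)\vert_k B$, inserting \eqref{eq_trsfJac} for $A$, collecting the powers of $\J(B)$, and using $\J(B)^{2j_1+j_2-k}(f_{j_1,j_2}\vert_0 B)=f_{j_1,j_2}\vert_{k-2j_1-j_2}B$, one obtains
\[
f\vert_k(AB)=\sum_{j_1=0}^{s_1}\sum_{j_2=0}^{s_2}\bigl(f_{j_1,j_2}\vert_{k-2j_1-j_2}B\bigr)(\X(AB)-\X(B))^{j_1}(\Y(AB)-\Y(B))^{j_2}.
\]

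Now equate the two expressions and expand the binomials on the right. With $B$ fixed, both sides are polynomial expressions in the two functions $\X(AB)$ and $\Y(AB)$ whose coefficients do not depend on $A$; moreover $AB$ ranges over all of $\Jac$ as $A$ does, so the uniqueness of the decomposition \eqref{eq_trsfJac} forces these polynomials to agree monomial by monomial. The coefficient of $\X(AB)^{s_1}\Y(AB)^{s_2}$ equals $f_{s_1,s_2}$ on the left, whereas on the right the constraints $j_1\le s_1$, $j_2\le s_2$ leave only the term $(j_1,j_2)=(s_1,s_2)$, with coefficient $f_{s_1,s_2}\vert_{k-2s_1-s_2}B$. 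Therefore $\cQ_{s_1,s_2}(f)\vert_{k-2s_1-s_2}B=\cQ_{s_1,s_2}(f)$ for every $B\in\Jac$; since $\cQ_{s_1,s_2}(f)\in\Sing$ by the definition of the depth decomposition, this says precisely that $\cQ_{s_1,s_2}(f)$ is a Jacobi singular form of weight $k-2s_1-s_2$.

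The tracking of the powers of $\J(B)$ through the cocycle relations is routine; the step that deserves care is the last one, passing from the functional identity in $A$ to the equality of coefficients. Here one must note that for fixed $B$ the coefficients involved are genuinely independent of $A$, that $(\X,\Y)$ evaluated along $AB$ realize the same functions on $\pkC$ as $(\X,\Y)$ evaluated at an arbitrary element of $\Jac$, and that the uniqueness already established for \eqref{eq_trsfJac} then applies. As a byproduct this also re-derives the weight shift $k\mapsto k-2s_1-s_2$ compatible with Proposition~\ref{prop_stabder}.
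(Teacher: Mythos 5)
Your proof is correct and follows essentially the same route as the paper: expand \(f\vert_k(AB)\) once directly via \eqref{eq_trsfJac} and once as \((f\vert_kA)\vert_kB\) using the cocycle relations of Proposition~\ref{prop_JXYcocycles}, then compare the coefficients of \(\X(AB)^{s_1}\Y(AB)^{s_2}\) to obtain \(\cQ_{s_1,s_2}(f)\vert_{k-2s_1-s_2}B=\cQ_{s_1,s_2}(f)\). Your extra care in justifying the coefficient identification (that \(AB\) ranges over all of \(\Jac\) and that uniqueness of the decomposition \eqref{eq_trsfJac} applies) is a welcome precision but does not change the argument.
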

\begin{proof}
Si \(A\) et \(B\) sont deux éléments de \(\Jac\), on a d'une part
\begin{equation}\label{eq_dbleacun}
f\vert_k(AB)=\sum_{x=0}^{s_1}\sum_{y=0}^{s_2}\cQ_{x,y}(f)\X(AB)^{x}\Y(AB)^{y}
\end{equation}
et d'autre part
\[
f\vert_k(AB)=\left(f\vert_kA\right)\vert_kB=\sum_{j_1=0}^{s_1}\sum_{j_2=0}^{s_2}\left(\cQ_{j_1,j_2}(f)\vert_{k-2j_1-j_2}B\right)\left(\X(A)\vert_2B\right)^{j_1}\left(\Y(A)\vert_1B\right)^{j_2}.
\]
Pour transformer cette dernière égalité, on utilise la proposition~\ref{prop_JXYcocycles} pour obtenir
\begin{multline}\label{eq_dbleacdeux}
f\vert_k(AB)=\sum_{x=0}^{s_1}\sum_{y=0}^{s_2}\left(\sum_{j_1=x}^{s_1}\sum_{j_2=y}^{s_2}\binom{j_1}{x}\binom{j_2}{y}\left(-\X(B)\right)^{j_1-x}\left(-\Y(B)\right)^{j_2-y}\right)\left(\cQ_{j_1,j_2}(f)\vert_{k-2j_1-j_2}B\right)\\\X(AB)^x\Y(AB)^y. 
\end{multline}
Comparant les coefficients de \(\X(AB)^{s_1}\Y(AB)^{s_2}\) dans~\eqref{eq_dbleacun} et~\eqref{eq_dbleacdeux}, on trouve
\[
\cQ_{s_1,s_2}(f)\vert_{k-2s_1-s_2}B=\cQ_{s_1,s_2}(f). 
\]
Puisque \(\cQ_{s_1,s_2}(f)\) est singulière, on en déduit que \(\cQ_{s_1,s_2}(f)\) est une forme de Jacobi singulière de poids \(k-2s_1-s_2\). 
\end{proof}

\begin{thm}\label{thm_strucQJ}
L'algèbre des formes singulières quasi-Jacobi est engendrée par les fonctions \(\wp\),\(\dz\wp\),\(\eisen_4\),\(\Eisen_1\) et \(\eisen_2\). On a donc
\[
\AlgQJS=\CC[\wp,\dz\wp,\eisen_4,\Eisen_1,\eisen_2]. 
\]
\end{thm}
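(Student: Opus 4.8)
Le plan est d'établir l'inclusion non triviale \(\AlgQJS\subseteq\CC[\wp,\dz\wp,\eisen_4,\Eisen_1,\eisen_2]\) par récurrence sur \(n\geq 0\), en prouvant que \(\QJS{k}{s_1}{s_2}\subseteq\CC[\wp,\dz\wp,\eisen_4,\Eisen_1,\eisen_2]\) pour tout poids \(k\) dès que \(s_1+s_2\leq n\). Le cas \(n=0\) est exactement le théorème~\ref{prop_strucJS} : d'après la remarque~\ref{jacobisinguliereprofnulle}, l'espace \(\QJS{k}{0}{0}\) coïncide avec \(\AlgJS_k\), donc est contenu dans \(\CC[\wp,\dz\wp,\eisen_4]\).

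Pour le pas de récurrence, soit \(f\in\QJS{k}{s_1}{s_2}\) avec \(s_1+s_2=n\geq 1\). Le lemme~\ref{lem_coefestforme} assure que le coefficient de tête \(g:=\cQ_{s_1,s_2}(f)\) est une forme de Jacobi singulière de poids \(k-2s_1-s_2\), donc, par le théorème~\ref{prop_strucJS}, un polynôme en \(\wp\), \(\dz\wp\) et \(\eisen_4\). Comme \(g\) est de profondeur \((0,0)\), comme \(\eisen_2\in\QJS{2}{1}{0}\) et \(\Eisen_1\in\QJS{1}{0}{1}\), et comme les opérateurs \(\cQ_{i,j}\) sont multiplicatifs (formule énoncée juste après la remarque~\ref{jacobisinguliereprofnulle}), le produit \(g\,\eisen_2^{s_1}\Eisen_1^{s_2}\) appartient à \(\QJS{k}{s_1}{s_2}\) ; de plus la seule contribution à son coefficient \(\cQ_{s_1,s_2}\) provient de chaque facteur \(\eisen_2\) apportant son \(\cQ_{1,0}=-2\ic\pi\) et de chaque facteur \(\Eisen_1\) apportant son \(\cQ_{0,1}=2\ic\pi\) (table~\ref{tab_explefonda}), toute autre répartition de la profondeur entre les facteurs étant impossible puisque \(g\) est de profondeur nulle. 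On a donc \(\cQ_{s_1,s_2}\bigl(g\,\eisen_2^{s_1}\Eisen_1^{s_2}\bigr)=(-2\ic\pi)^{s_1}(2\ic\pi)^{s_2}\,g\), et l'on poserait
\[
h:=f-\frac{1}{(-2\ic\pi)^{s_1}(2\ic\pi)^{s_2}}\,g\,\eisen_2^{s_1}\Eisen_1^{s_2}\in\QJS{k}{s_1}{s_2},
\]
de sorte que \(\cQ_{s_1,s_2}(h)=0\). Exactement comme dans la preuve de la proposition~\ref{prop_stabder}, l'annulation de ce coefficient entraîne \(h\in\QJS{k}{s_1}{s_2-1}+\QJS{k}{s_1-1}{s_2}\) (avec la convention \(\QJS{k}{a}{b}=0\) lorsque \(a\) ou \(b\) est négatif). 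Ces deux termes relèvent de l'hypothèse de récurrence, leur profondeur totale valant au plus \(n-1\) ; donc \(h\), et par suite \(f=h+\tfrac{1}{(-2\ic\pi)^{s_1}(2\ic\pi)^{s_2}}\,g\,\eisen_2^{s_1}\Eisen_1^{s_2}\), appartient à \(\CC[\wp,\dz\wp,\eisen_4,\Eisen_1,\eisen_2]\).

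L'essentiel du travail réside dans le lemme~\ref{lem_coefestforme}, déjà établi plus haut ; une fois ce lemme disponible, le théorème découle de cette descente élémentaire, toute la combinatoire des cocycles de \(\sldz\) et de \(\Z^2\) y étant absorbée. Le seul point réclamant de la vigilance est le suivi des profondeurs : il faut s'assurer que la soustraction du terme de tête fait décroître la profondeur totale \(s_1+s_2\), et pas seulement le coefficient \(\cQ_{s_1,s_2}\) — c'est précisément l'observation empruntée à la preuve de la proposition~\ref{prop_stabder}. Aucune indépendance algébrique n'est requise pour l'énoncé ; le lemme~\ref{lem_indepalgcinq} montre de surcroît que \(\AlgQJS\) est l'algèbre de polynômes libre engendrée par ces cinq fonctions.
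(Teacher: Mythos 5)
Votre preuve est correcte et suit essentiellement la même démarche que celle de l'article : récurrence sur \(s_1+s_2\), base donnée par le théorème~\ref{prop_strucJS}, puis soustraction du terme de tête \(\cQ_{s_1,s_2}(f)\,\eisen_2^{s_1}\Eisen_1^{s_2}\) (avec la même constante de normalisation, puisque \((-2\ic\pi)^{s_1}(2\ic\pi)^{s_2}=(-1)^{s_1}(2\ic\pi)^{s_1+s_2}\)) en s'appuyant sur le lemme~\ref{lem_coefestforme}. Vous explicitez seulement un peu plus le calcul de \(\cQ_{s_1,s_2}\) du produit et la décroissance de la profondeur totale, ce qui ne change rien à l'argument.
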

\begin{proof}
On a montré (voir le théorème~\ref{prop_strucJS}) que \(\AlgJS=\CC[\wp,\dz\wp,\eisen_4]\). Soit \(f\in\QJS{k}{s_1}{s_2}\), posons
\[
g=f-(-1)^{s_1}\left(\frac{1}{2\ic\pi}\right)^{s_1+s_2}\cQ_{s_1,s_2}(f)\eisen_2^{s_1}\Eisen_1^{s_2}. 
\]
Alors
\begin{enumerate}
\item  \(g\in\QJS{k}{s_1-1}{s_2}+\QJS{k}{s_1}{s_2-1}\) ;
\item \(\cQ_{s_1,s_2}(f)\in\JS{k-2s_1-s_2}\subset\CC[\wp,\dz\wp,\eisen_4]\) d'après le lemme~\ref{lem_coefestforme} et donc \(g-f\in\CC[\wp,\dz\wp,\eisen_4,\Eisen_1,\eisen_2]\). 
\end{enumerate}
A partir de la remarque~\ref{jacobisinguliereprofnulle}, par récurrence sur \(s_1+s_2\), on obtient 
\[
\forall k\in\N\enspace\forall(s_1,s_2)\in\N^2\qquad \QJS{k}{s_1}{s_2}\subseteq\CC[\wp,\dz\wp,\eisen_4,\Eisen_1,\eisen_2]. 
\]
D'après le lemme \ref{lem_indepalgcinq}, \(\AlgQJS\) est donc l'algèbre de polyn\^omes \(\CC[\wp,\dz\wp,\eisen_4,\Eisen_1,\eisen_2]\).
\end{proof}
\subsection{Sous-algèbres remarquables}\label{sec_sousalgrem}
Les résultats de cette partie sont résumés figure~\vref{fig_diagram}
\subsubsection{Formes quasi-Jacobi de type quasielliptique}\label{subsec_jacell}
\begin{dfn}
On appelle \emph{forme quasi-Jacobi de type quasielliptique} de poids~\(k\) et profondeur~\(s\) toute forme singulière quasi-Jacobi de poids \(k\) et profondeur \((0,s)\). 
\end{dfn}
On note \(\mathrm{JS}_k^{0,\leq s}\) l'espace vectoriel de telles formes de profondeur inférieure ou égale à \(s\). On note alors \(\displaystyle\AlgQJell = \bigoplus_{k=0}^{\infty} \bigcup_{s \geq 0} \mathrm{JS}_k^{0,\leq s}\) qu'on convient d'appeler dans la suite ensemble des formes quasi-Jacobi de type quasielliptique. 

Grâce au théorème~\ref{thm_strucQJ}, c'est une algèbre de polyn\^omes~:
\[
\AlgQJell=\CC[\wp,\dz\wp,\eisen_4,\Eisen_1].
\]
On a \(\AlgM \subset \AlgJS \subset \AlgQJell \subset \AlgQJS\). 

L'équation~\eqref{eq_dtaurho} montre que \(\AlgQJell\) n'est pas stable par la \emph{dérivation modulaire}
\[
\dtau=\frac{\pi}{2\ic}\frac{\partial}{\partial\tau}. 
\]
D'après les équations~\eqref{eq_delLaurent} et~\eqref{eq_LaurentEun}, on a 
\begin{equation}\label{eq_Euwped}
\frac{\partial\Eisen_1}{\partial z}=-\wp-\eisen_2
\end{equation}
et donc \(\AlgQJell\) n'est pas stable par la \emph{dérivation elliptique}
\[
\dz=\frac{\partial}{\partial z}.
\]

La table~\vref{tab_stabderalginter} récapitule les stabilités par dérivation des différentes algèbres en jeu. 
\subsubsection{Formes quasi-Jacobi de type quasimodulaire}\label{sec_fqjm}
\begin{dfn}
On appelle \emph{forme quasi-Jacobi de type quasimodulaire} de poids \(k\) et profondeur \(s\) toute forme singulière quasi-Jacobi de poids \(k\) et profondeur \((s,0)\). 
\end{dfn}
On note \(\mathrm{JS}_k^{\leq s, 0}\) l'espace vectoriel de telles formes de profondeur inférieure ou égale à \(s\). On note \(\displaystyle\AlgQJmod = \bigoplus_{k=0}^{\infty} \bigcup_{s \geq 0} \mathrm{JS}_k^{\leq s, 0}\) qu'on convient d'appeler dans la suite ensemble des formes quasi-Jacobi de type quasimodulaire. 

Grâce au théorème~\ref{thm_strucQJ}, c'est une algèbre de polynômes~:
\[
\AlgQJmod=\CC[\wp,\dz\wp,\eisen_4,\eisen_2].
\]
On a \(\AlgM \subset \AlgJS \subset \AlgQJmod \subset \AlgQJS\) et \(\AlgM \subset \AlgQM \subset \AlgQJmod \subset \AlgQJS\). 

 Grâce à la remarque~\ref{rem_profdz}, l'algèbre \(\AlgQJmod\) est stable par la dérivation \(\dz\). L'équation~\eqref{eq_dtaurho} montre qu'elle n'est pas stable par la dérivation \(\dtau\).  

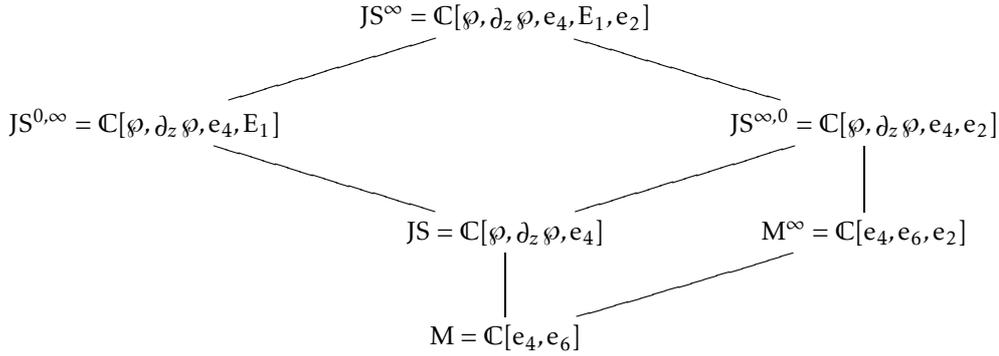
\begin{figure}
\begin{center}
\begin{equation*}
\xymatrix{ & \AlgQJS=\CC[\wp,\dz\wp,\eisen_4,\Eisen_1,\eisen_2] \ar@{-}[rd] \\ 
\AlgQJell=\CC[\wp,\dz\wp,\eisen_4,\Eisen_1] \ar@{-}[ru] \ar@{-}[rd] &&\AlgQJmod=\CC[\wp,\dz\wp,\eisen_4,\eisen_2]   \\ 
&\AlgJS=\CC[\wp,\dz\wp,\eisen_4] \ar@{-}[ru] & \AlgQM=\CC[\eisen_4,\eisen_6,\eisen_2]\ar@{-}[u] \\
& \ar@{-}[u] \AlgM=\CC[\eisen_4,\eisen_6] \ar@{-}[ru]&  }
\end{equation*}
\caption{Sous-algèbres remarquables.}
\label{fig_diagram}
\end{center}
\end{figure}

\subsection{Équations différentielles fondamentales} 
\subsubsection{Dérivée d'Oberdieck}
\begin{dfn}
On définit une dérivation sur l'algèbre \(\AlgQJS\) des formes singulières quasi-Jacobi en prolongeant par linéarité la définition suivante  
\[
\text{pour tout forme \(f\in\QJSpoids{k}\)},\qquad\Ob(f)=4\dtau(f)+\Eisen_1\dz(f)-k\eisen_2f. 
\]
On appelle \emph{dérivation d'Oberdieck} cette dérivation. 
\end{dfn}
\begin{rem}
On a \(\Ob=4\pi^2\ObCDMR\) où \(\ObCDMR\) est définie dans~\cite{hal03132764}. Le nom fait référence au travail de Georg Oberdieck~\cite{oberdieck2014serrederivativeweightjacobi}. La restriction de \(\Ob\) à \(\AlgM\) est la dérivation de Serre.
\end{rem}
La dérivation \(\Ob\) envoie par définition \(\QJS{k}{s_1}{s_2}\) dans \(\QJS{k+2}{s_1+1}{s_2+1}\). On a plus précisément la proposition suivante~: 
\begin{prop}\label{prop_ObstabJS}

\begin{enumerate}
 \item La dérivation \(\Ob\) envoie \(\QJS{k}{s_1}{s_2}\) dans \(\QJS{k+2}{s_1+1}{s_2}\).
 \item L'algèbre \(\AlgJS\) est stable par \(\Ob\) : l'image par \(\Ob\) de toute forme de Jacobi singulière de poids \(k\) est une forme de Jacobi singulière de poids \(k+2\). 
\end{enumerate}
\end{prop}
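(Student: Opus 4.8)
The plan is to read off the coefficients $\cQ_{j_1,j_2}$ of $\Ob(f)$ at the critical depths from the two Leibniz-type formulas of Proposition~\ref{prop_stabder}, the product formula for the $\cQ_{i,j}$, and the depths of $\Eisen_1$ and $\eisen_2$ recorded in Table~\ref{tab_explefonda}. Since $\dtau=\frac{\pi}{2\ic}\,\partial/\partial\tau$, one has $4\dtau=-2\ic\pi\,\partial/\partial\tau$, hence for $f\in\QJSpoids{k}$
\[
\Ob(f)=-2\ic\pi\,\frac{\partial f}{\partial\tau}+\Eisen_1\dz f-k\eisen_2 f.
\]
From this expression alone, together with Proposition~\ref{prop_stabder} and the facts $\Eisen_1\in\QJSpoids{1}$ and $\eisen_2\in\QJSpoids{2}$, it is immediate that $\Ob$ raises the weight by $2$ and each of the two depths by at most $1$, that is $\Ob f\in\QJS{k+2}{s_1+1}{s_2+1}$, the coarse statement already recorded just before the proposition. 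The content of part~(1) is that the \emph{elliptic} depth is in fact not raised, which is exactly what the coefficient $\Eisen_1$ (with $\cQ_{0,1}(\Eisen_1)=2\ic\pi$) in front of $\dz$ and the coefficient $-k\eisen_2$ (with $\cQ_{1,0}(\eisen_2)=-2\ic\pi$) in the definition of $\Ob$ are calibrated to achieve.

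For part~(1), fix $f\in\QJS{k}{s_1}{s_2}$; by the coarse statement it suffices to check $\cQ_{i,s_2+1}(\Ob f)=0$ for $0\le i\le s_1+1$. Since $f$ has elliptic depth $\le s_2$, the formula for $\cQ_{i,s_2+1}(\partial f/\partial\tau)$ collapses to $\dz\cQ_{i,s_2}(f)$, so the first term of $\Ob f$ contributes $-2\ic\pi\,\dz\cQ_{i,s_2}(f)$. Because $\dz f$ also has elliptic depth $\le s_2$, one gets $\cQ_{i,s_2+1}(\dz f)=0$ and $\cQ_{i,s_2}(\dz f)=\dz\cQ_{i,s_2}(f)$; feeding this into the specialization of the product formula to $\Eisen_1$, namely $\cQ_{i,j}(\Eisen_1 g)=\Eisen_1\cQ_{i,j}(g)+2\ic\pi\,\cQ_{i,j-1}(g)$, gives $\cQ_{i,s_2+1}(\Eisen_1\dz f)=2\ic\pi\,\dz\cQ_{i,s_2}(f)$. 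Finally, $\cQ_{i,j}(\eisen_2 f)=\eisen_2\cQ_{i,j}(f)-2\ic\pi\,\cQ_{i-1,j}(f)$ vanishes when $j=s_2+1$. Adding the three contributions, $\cQ_{i,s_2+1}(\Ob f)=-2\ic\pi\,\dz\cQ_{i,s_2}(f)+2\ic\pi\,\dz\cQ_{i,s_2}(f)=0$, hence $\Ob f\in\QJS{k+2}{s_1+1}{s_2}$.

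For part~(2), recall from Theorem~\ref{prop_strucJS} and Remark~\ref{jacobisinguliereprofnulle} that $\AlgJS_k=\JS{k}=\QJS{k}{0}{0}$. Given $f\in\AlgJS_k$, part~(1) already gives $\Ob f\in\QJS{k+2}{1}{0}$, so it only remains to show $\cQ_{1,0}(\Ob f)=0$; this places $\Ob f$ in $\QJS{k+2}{0}{0}=\AlgJS_{k+2}$ and proves that $\AlgJS=\bigoplus_k\AlgJS_k$ is stable under $\Ob$. Since $f$ has depth $(0,0)$, the formula for $\cQ_{1,0}(\partial f/\partial\tau)$ reduces to $k\,\cQ_{0,0}(f)=kf$, the formula for $\cQ_{1,0}(\dz f)$ gives $\dz\cQ_{1,0}(f)+\cQ_{0,1}(f)=0$ whence $\cQ_{1,0}(\Eisen_1\dz f)=0$, and $\cQ_{1,0}(\eisen_2 f)=-2\ic\pi f$ by the same product formula. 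Therefore $\cQ_{1,0}(\Ob f)=-2\ic\pi\cdot kf+0-k\cdot(-2\ic\pi f)=0$. I expect no genuine obstacle: once the $\cQ$-formulas of Proposition~\ref{prop_stabder} are available the whole proof is bookkeeping, the only point requiring care being to keep the three sign conventions straight ($-2\ic\pi$ from $4\dtau$, $+2\ic\pi$ from $\cQ_{0,1}(\Eisen_1)$, $-2\ic\pi$ from $\cQ_{1,0}(\eisen_2)$). One could instead prove~(2) by noting that $\Ob$ is a derivation of $\AlgJS=\CC[\wp,\dz\wp,\eisen_4]$ and checking that it maps each of the three generators back into $\AlgJS$ (the image of $\eisen_4$ being its Serre derivative), but that route would require the explicit expressions for $\dtau\wp$ and $\dtau\eisen_4$, whereas the argument above uses only structural results already established.
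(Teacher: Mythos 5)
Your proof is correct and follows essentially the same route as the paper: both use the formulas of Proposition~\ref{prop_stabder} together with the product rule for the \(\cQ_{i,j}\) and the known coefficients of \(\Eisen_1\) and \(\eisen_2\) to show that \(\cQ_{i,s_2+1}(\Ob f)\) vanishes for part~(1) and that \(\cQ_{1,0}(\Ob f)\) vanishes for part~(2). The only cosmetic difference is that the paper first assembles the full closed formula~\eqref{eq_Qob} for \(\cQ_{j_1,j_2}(\Ob f)\) and reads both claims off it, whereas you compute each needed coefficient term by term.
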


\begin{proof}
Soit \(f\in\QJSpoids{k}\). En utilisant la proposition~\ref{prop_stabder}, on voit que
\begin{multline}\label{eq_Qob}
\cQ_{j_1,j_2}(\Ob(f))=4\dtau(\cQ_{j_1,j_2}(f))+\Eisen_1\dz(\cQ_{j_1,j_2}(f))-k\eisen_2\cQ_{j_1,j_2}(f)+2\ic\pi(j_1+j_2-1)\cQ_{j_1-1,j_2}(f)\\+(j_2+1)\Eisen_1\cQ_{j_1-1,j_2+1}(f). 
\end{multline}

Si \(f \in \QJS{k}{s_1}{s_2}\), on a donc \(\cQ_{j_1, s_2+1}(f)=0\) pour tout \(j_1\), donc \(\Ob(f) \in \QJS{k+2}{s_1+1}{s_2}\).

Si \(f\in\JS{k}\), on a \(\cQ_{1,0}\left(\Ob(f)\right)=0\) ce qui démontre que \(\Ob(f) \in \JS{k+2}\). 
\end{proof}
\begin{rem}
D'après la proposition~\ref{prop_ObstabJS}, la dérivation d'Oberdieck stabilise \(\AlgQJmod\). En revanche, comme on le verra à l'équation~\eqref{eq_ObEun}, elle ne stabilise pas \(\AlgQJell\). 
\end{rem}
\begin{table}\label{table_stabder}
\begin{center}
{\renewcommand{\arraystretch}{1.5}%
\begin{tabular}{|>{$}c<{$}|c|c|c|}
\hline
 & \(\dz\) & \(\dtau\) & \(\Ob\)\\
\hline
\AlgM & oui & non & oui\\
\hline
\AlgJS & oui & non & oui\\
\hline
\AlgQM & oui & oui & oui\\
\hline
\AlgQJell & non & non & non\\
\hline
\AlgQJmod & oui & non & oui\\
\hline
\AlgQJS & oui & oui & oui \\
\hline
\end{tabular}
}
\caption{Stabilité des algèbres par dérivation.}
\label{tab_stabderalginter}
\end{center}
\end{table}

\subsubsection{Applications}\label{subsubsec_application}
Les résultats généraux des paragraphes précédents permettent, en calculant explicitement les images par dérivation des générateurs \(\wp,\dz\wp,\eisen_4,\Eisen_1,\eisen_2\), de déterminer des relations différentielles entre ces générateurs. 

La fonction \(\wp\) est une forme de Jacobi singulière de poids \(2\) et \(\Ob(\wp)\) est donc une forme de Jacobi singulière de poids \(4\) ;  grâce à la proposition~\ref{prop_dimJS} la dimension de \(\JS{4}\) est \(2\), une base étant \((\wp^2,\eisen_4)\). En égalisant les coefficients en \(1/z^4\) et constant, on trouve
\[
\Ob(\wp)=-2(\wp^2-10\eisen_4). 
\]
d'où l'on déduit
\begin{equation}\label{eq_dtaurho}
-4\dtau\wp=\Eisen_1\dz\wp+2\wp^2-2\eisen_2\wp-20\eisen_4. 
\end{equation}
L'égalisation des coefficients en \(z^{2n}\) pour tout \(n\geq 1\) conduit ensuite à
\begin{multline}\label{eq_gunther}
2(2n+1)\dtau\eisen_{2n+2}\\=(n+1)(2n+1)\eisen_{2n+2}\eisen_2-(n+2)(2n+5)\eisen_{2n+4}+\sum_{\mathclap{\substack{a \geq 1, b \geq 1\\a+b=n}}}(2a+1)(a-2b-1)\eisen_{2a+2}\eisen_{2b+2}. 
\end{multline}
En particulier pour \(n=1\) et \(n=2\) (et compte-tenu de l'égalité \(\eisen_8=\frac{3}{7}\eisen_4^2\) qui est conséquence du fait que l'espace des formes modulaires de poids \(8\) est de dimension \(1\)) on retrouve à l'aide de \eqref{eq_diffwp} les équations de Ramanujan
\begin{subequations}
 \label{eq_ramanujan}
\begin{align}
\dtau\eisen_4 &=\eisen_4\eisen_2-\frac{7}{2}\eisen_6\label{eq_ramae4}\\
&=-\frac{1}{10}\wp^3+\frac{1}{40}\left(\dz\wp\right)^2+\frac{3}{2}\wp\eisen_4+\eisen_4\eisen_2\label{eq_ramae4sanse6}\\
\dtau\eisen_6&=\frac{3}{2}\eisen_6\eisen_2-\frac{15}{7}\eisen_4^2.\label{eq_ramae6} 
\end{align}
\end{subequations}
En particulier
\begin{equation}\label{eq_Obeq}
	\Ob(\eisen_4)=-14\eisen_6=-\frac{2}{5}\wp^3+6\wp\eisen_4+\frac{1}{10}\left(\dz\wp\right)^2.  
\end{equation}	

Grâce à la remarque~\ref{rem_profdz}, la fonction \(\dz^2\wp\) est une forme de Jacobi singulière de poids \(4\) et donc une combinaison linéaire de \(\wp^2\) et \(\eisen_4\). En égalisant les termes en \(z^{-4}\) et constant du développement de Laurent, on obtient :
\begin{equation}\label{eq_ddeuxzrho}
\dz^2\wp=6(\wp^2-5\eisen_4). 
\end{equation}

La fonction \(\dz\wp\) est une forme de Jacobi singulière de poids \(3\) et \(\Ob(\dz\wp)\) est donc une forme de Jacobi singulière de poids \(5\) ;  l'espace \(\JS{5}\) est de dimension \(1\) engendré par \(\wp\dz\wp\). En égalisant les coefficients en \(1/z^5\), on trouve
\[
\Ob(\dz\wp)=-3\wp\dz\wp
\]
d'où l'on déduit
\begin{equation}\label{eq_dtaudzrho}
\dtau\dz\wp=\frac{3}{2}(5\eisen_4-\wp^2)\Eisen_1+\frac{3}{4}(-\wp+\eisen_2)\dz\wp.
\end{equation}

Par la proposition \ref{prop_ObstabJS}, \(\Ob(\Eisen_1)\in\QJS{3}{1}{1}\). On a \(\cQ_{1,1}\left(\Ob(\Eisen_1)\right)=-4\pi^2\), puis \(\cQ_{1,0}\left(\Ob(\Eisen_1)\right)=2\ic\pi\Eisen_1=\cQ_{1,0}(-\Eisen_1\eisen_2)\) et \(\cQ_{0,1}\left(\Ob(\Eisen_1)\right)=-2\ic\pi\eisen_2=\cQ_{0,1}(-\Eisen_1\eisen_2)\) ; on en déduit que \(\Ob(\Eisen_1)+\Eisen_1\eisen_2\in\JS{3}=\CC\dz\wp\). Finalement, 
\begin{equation}\label{eq_ObEun}
\Ob(\Eisen_1)=\frac{1}{2}\dz\wp-\Eisen_1\eisen_2. 
\end{equation}
Il en résulte que \(\AlgQJell\) n'est pas stable par \(\Ob\). Compte-tenu de~\eqref{eq_Euwped}, on obtient ensuite
\begin{equation}\label{eq_dtauEun}
4\dtau\Eisen_1=\Eisen_1\eisen_2+\wp\Eisen_1+\frac{1}{2}\dz\wp.
\end{equation}
De m\^eme, \(\Ob(\eisen_2)\in\QJS{4}{2}{0}\). Grâce à~\eqref{eq_Qob}, \(\cQ_{2,0}\left(\Ob(\eisen_2)\right)=4\pi^2=\cQ_{2,0}(-\eisen_2^2)\) puis \(\cQ_{1,0}\left(\Ob(\eisen_2)\right)=4\ic\pi\eisen_2=\cQ_{1,0}(-\eisen_2^2)\). On en déduit que \(\Ob(\eisen_2)+\eisen_2^2\in\JS{4}=\CC\wp^2+\CC\eisen_4\). La dépendance en \(z\) montre que \(\Ob(\eisen_2)+\eisen_2^2\in\CC\eisen_4\) puis le calcul du premier coefficient de Fourier permet de retrouver l'image de \(\eisen_2\) par la dérivation de Serre 
\begin{equation}\label{eq_ObEdeux}
 \Ob(\eisen_2) = -\eisen_2^2-5\eisen_4,
\end{equation}
et donc l'équation de Ramanujan
\begin{equation}\label{eq_ramaed}
\dtau\eisen_2=\frac{1}{4}\left(\eisen_2^2-5\eisen_4\right).
\end{equation}
\section{Crochets de Rankin-Cohen et déformations formelles}
Cette partie est consacrée à la construction de déformations formelles (voir \cite[\texten{Chapter}~13]{zbMATH06054532}, \cite[\S~1.1]{zbMATH07362171}) des différentes algèbres de formes quasi-Jacobi étudiées précédemment. 
\subsection{Crochets de Rankin-Cohen des formes quasi-Jacobi de type quasielliptique}\label{RCquasiell}
D'après la proposition \ref{prop_stabder}, la dérivation modulaire \(\dtau\) de \(\AlgQJS\) est homogène de degré \(2\) pour cette graduation : \(\dtau(\QJSpoids{k})\subseteq\QJSpoids{k+2}\) pour tout \(k\geq 0\). On peut alors définir une déformation formelle de \(\AlgQJS\) du type des crochets de Rankin-Cohen formels au sens de \cite{zbMATH07362171}.

\begin{prop}\label{RCtauQ}Considérons la suite \((\crochet{}{}{n})_{n\geq 0}\) d'applications de \(\AlgQJS\times \AlgQJS\) dans \(\AlgQJS\) définies par prolongement bilinéaire de
\begin{equation}\label{RCtau}
\crochet{f}{g}{n}=\sum_{r=0}^n(-1)^r\binom{k+n-1}{n-r}\binom{\ell+n-1}{r}\dtau^r(f)\dtau^{n-r}(g)
\end{equation}
pour tous \(f\in\QJSpoids{k},g\in\QJSpoids{\ell}\). Alors :
\begin{enumerate}[font=\normalfont,label={(\roman*)}]
\item\label{item_unprop} \([\QJSpoids{k},\QJSpoids{\ell}]_n\subseteq\QJSpoids{k+\ell+2n}\) pour tous \(n,k,\ell\geq 0\).
\item\label{item_deuxprop} La suite \((\crochet{}{}{n})_{n\geq 0}\) est une déformation formelle de \(\AlgQJS\).
\item\label{item_troisprop} La sous-algèbre \(\AlgM\) est stable par les applications \(\crochet{}{}{n}\), leur restriction coïncidant avec les crochets de Rankin-Cohen classiques sur les formes modulaires.
\end{enumerate}
\end{prop}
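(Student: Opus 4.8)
I would establish the three assertions in turn. Assertion~\ref{item_unprop} is purely a grading statement: by Proposition~\ref{prop_stabder} the modular derivation $\dtau=\frac{\pi}{2\ic}\,\partial/\partial\tau$ is homogeneous of degree~$2$, so $\dtau^r(f)\in\QJSpoids{k+2r}$ and $\dtau^{n-r}(g)\in\QJSpoids{\ell+2(n-r)}$ for $f\in\QJSpoids{k}$, $g\in\QJSpoids{\ell}$; hence each summand $\dtau^r(f)\,\dtau^{n-r}(g)$ of~\eqref{RCtau}, and therefore $\crochet{f}{g}{n}$ itself, lies in $\QJSpoids{k+\ell+2n}$.

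For assertion~\ref{item_deuxprop} I would argue abstractly. By Theorem~\ref{thm_strucQJ} the algebra $\AlgQJS=\CC[\wp,\dz\wp,\eisen_4,\Eisen_1,\eisen_2]$ is a commutative $\CC$-algebra graded by the weight, $\dtau$ is a derivation of $\AlgQJS$ homogeneous of degree~$2$ for this grading, and $\crochet{f}{g}{0}=fg$. Formula~\eqref{RCtau} is then exactly the sequence of standard Rankin--Cohen bidifferential operators attached to the pair $(\AlgQJS,\dtau)$. One then invokes the general result of~\cite{zbMATH07362171} (following~\cite{zbMATH05156388} and the principle of~\cite{MR1280058}): for any commutative graded algebra over a field of characteristic zero equipped with a degree-$2$ derivation, this sequence of operators is a formal deformation of the algebra, the associativity and cocycle conditions amounting to universal identities between the binomial coefficients that need not be rechecked in each instance. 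Nothing beyond verifying that $\AlgQJS$ satisfies these hypotheses is required here, and it plainly does.

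For assertion~\ref{item_troisprop} I would work inside $\AlgQM$. Viewing one-variable functions as functions of $(\tau,z)$ independent of $z$, one has $\AlgM=\CC[\eisen_4,\eisen_6]\subseteq\AlgQM=\CC[\eisen_2,\eisen_4,\eisen_6]\subseteq\AlgQJS$, and Ramanujan's relations~\eqref{eq_ramanujan} show $\dtau(\AlgQM)\subseteq\AlgQM$. Hence for $f,g\in\AlgM$ the bracket $\crochet{f}{g}{n}$ is a polynomial in $\dtau$-iterates of $f,g$ and in products, so it lies in $\AlgQM$ and is independent of $z$; to conclude $\crochet{f}{g}{n}\in\AlgM$ it then suffices to show that its $\eisen_2$-depth vanishes, i.e. $\cQ_{1,0}(\crochet{f}{g}{n})=0$. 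From the formula of Proposition~\ref{prop_stabder} for $\cQ_{1,0}(\partial f/\partial\tau)$ one gets, by an immediate induction using $\cQ_{1,0}(f)=0$, that $\cQ_{1,0}(\dtau^r f)=\frac{\pi}{2\ic}\,r(k+r-1)\,\dtau^{r-1}(f)$ for $f$ modular of weight $k$. Applying the product rule $\cQ_{1,0}(ab)=\cQ_{1,0}(a)\,b+a\,\cQ_{1,0}(b)$ termwise to~\eqref{RCtau} and regrouping by the monomials $\dtau^p(f)\,\dtau^q(g)$ with $p+q=n-1$, the coefficient of each such monomial is a difference of two terms that cancel thanks to the elementary identity $\binom{m}{j+1}(j+1)=\binom{m}{j}(m-j)$; thus $\cQ_{1,0}(\crochet{f}{g}{n})=0$ and $\crochet{f}{g}{n}\in\AlgM$. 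Finally, that the restricted brackets coincide with the classical Rankin--Cohen brackets is immediate, since for modular $f,g$ formula~\eqref{RCtau} is by definition the classical Rankin--Cohen bracket associated with the derivation $\dtau$, the exponents appearing in the binomial coefficients being the modular weights of $f$ and $g$.

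The main obstacle is assertion~\ref{item_deuxprop}, but it is entirely absorbed into the abstract deformation theorem of~\cite{zbMATH07362171}; the only genuinely new computation, in assertion~\ref{item_troisprop}, is the depth-$0$ verification, which comes down to a classical binomial identity.
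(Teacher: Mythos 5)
Your proof is correct and follows essentially the same route as the paper: points (i) and (ii) are, as in the paper, a direct application of the general result of \cite{zbMATH07362171} for a graded commutative algebra equipped with a derivation homogeneous of degree \(2\), and for point (iii) you simply write out the classical quasimodular computation that the paper delegates to a citation (\cite[\S 5.2]{zbMATH05808162}); your induction formula \(\cQ_{1,0}(\dtau^r f)=\frac{\pi}{2\ic}r(k+r-1)\dtau^{r-1}(f)\) and the binomial cancellation are both correct. The one step worth making explicit is the claim that \(\cQ_{1,0}(F)=0\) forces \(F\in\AlgM\) for \(F\in\AlgQM\): writing \(F=\sum_j h_j\eisen_2^j\) with \(h_j\in\AlgM\) and using \(\cQ_{1,0}(\eisen_2)=-2\ic\pi\) together with the algebraic independence of the generators gives \(\cQ_{1,0}(F)=-2\ic\pi\sum_{j\geq 1}jh_j\eisen_2^{j-1}\), whence \(h_j=0\) for all \(j\geq 1\).
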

\begin{proof} Les points~\ref{item_unprop} et~\ref{item_deuxprop} découlent d'une application directe du résultat algébrique général de \cite[\texten{Proposition}~3]{zbMATH07362171}. Le point~\ref{item_troisprop} est le résultat classique prouvé par exemple en \cite[\S 5.2]{zbMATH05808162}.
\end{proof}
On a vu au \S~\ref{subsec_jacell} que la sous-algèbre \(\AlgQJell\) n'est pas stable par la dérivation \(\dtau\). Elle l'est cependant par la déformation ci-dessus.
\begin{thm}\label{RCtauA}
La sous-algèbre \(\AlgQJell\) est stable par la suite de crochets de Rankin-Cohen \(\left(\crochet{}{}{n}\right)_{n\geq0}\).
\end{thm}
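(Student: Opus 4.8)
Le plan est de se ramener aux arguments algébriques généraux de \cite[\texten{Theorem}~6]{zbMATH07362171}, d'après lesquels une sous-algèbre graduée stable par une \og~dérivation de Serre~\fg{} est automatiquement stable par la suite des crochets de Rankin-Cohen associée à la dérivation ordinaire ; il reste donc à reconnaître cette structure ici. D'après le théorème~\ref{thm_strucQJ} et le lemme~\ref{lem_indepalgcinq}, l'algèbre \(\AlgQJS\) est l'anneau de polynômes \(\AlgQJell[\eisen_2]\) en la variable \(\eisen_2\), qui est homogène de poids~\(2\), au-dessus de \(\AlgQJell=\CC[\wp,\dz\wp,\eisen_4,\Eisen_1]\) ; et la proposition~\ref{prop_stabder} assure que \(\dtau\) est une dérivation de \(\AlgQJS\) homogène de degré~\(2\) pour la graduation par le poids. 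On introduirait l'opérateur \(\vartheta\) défini sur les composantes homogènes par \(\vartheta(f)=\dtau(f)-\tfrac{k}{4}\eisen_2 f\) pour \(f\in\QJSpoids{k}\) et prolongé par linéarité : la formule de Leibniz se vérifiant immédiatement sur les composantes homogènes, \(\vartheta\) est encore une dérivation de \(\AlgQJS\), homogène de degré~\(2\).

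Le cœur de la vérification serait de contrôler que \(\vartheta\) stabilise \(\AlgQJell\) et que \(\dtau(\eisen_2)-\tfrac14\eisen_2^2\) appartient à \(\AlgQJell\). Pour le premier point, comme \(\vartheta\) est une dérivation et que \(\AlgQJell\) est engendrée par \(\wp,\dz\wp,\eisen_4,\Eisen_1\), il suffit de contrôler l'image de ces quatre générateurs, ce que donnent exactement les équations différentielles fondamentales déjà établies : l'équation~\eqref{eq_dtaurho} donne \(\vartheta(\wp)=-\tfrac14\Eisen_1\dz\wp-\tfrac12\wp^2+5\eisen_4\), l'équation~\eqref{eq_dtaudzrho} donne \(\vartheta(\dz\wp)=\tfrac32(5\eisen_4-\wp^2)\Eisen_1-\tfrac34\wp\dz\wp\), l'équation~\eqref{eq_ramae4} donne \(\vartheta(\eisen_4)=-\tfrac72\eisen_6\), qui appartient à \(\AlgJS\subset\AlgQJell\) grâce à~\eqref{eq_diffwp}, et l'équation~\eqref{eq_dtauEun} donne \(\vartheta(\Eisen_1)=\tfrac14\wp\Eisen_1+\tfrac18\dz\wp\) : toutes ces fonctions sont dans \(\AlgQJell\). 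Pour le second point, l'équation~\eqref{eq_ramaed} donne \(\dtau(\eisen_2)-\tfrac14\eisen_2^2=-\tfrac54\eisen_4\in\AlgQJell\). Les hypothèses de \cite[\texten{Theorem}~6]{zbMATH07362171} seraient alors réunies, avec la constante \(c=\tfrac14\), et l'on conclurait que \(\AlgQJell\) est stable par les crochets \(\crochet{}{}{n}\) définis en~\eqref{RCtau}, leur restriction à \(\AlgQJell\) s'identifiant aux crochets de Rankin-Cohen de \(\AlgQJell\) associés à la dérivation \(\vartheta\) ; via l'inclusion \(\AlgM\subset\AlgQJell\) et la proposition~\ref{RCtauQ}, ils prolongent les crochets classiques sur \(\AlgM\).

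Le point qui demande le plus de soin — et que le théorème cité permet précisément d'économiser — est la compensation des termes en \(\eisen_2\) dans la combinaison~\eqref{RCtau} : en écrivant chaque \(\dtau^r(f)\) comme combinaison, à coefficients polynomiaux en \(\eisen_2\), d'itérées de \(\vartheta\) appliquées à \(f\), l'alternance des coefficients binomiaux \(\binom{k+n-1}{n-r}\binom{\ell+n-1}{r}\) annule exactement la partie portée par les puissances de \(\eisen_2\), ne laissant qu'une expression à valeurs dans \(\AlgQJell\). On pourrait aussi procéder directement par récurrence sur la profondeur, en transposant l'argument de Zagier pour le cas quasimodulaire classique ; le recours au résultat général de \cite{zbMATH07362171} reste toutefois le plus économique.
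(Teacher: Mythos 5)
Votre démonstration est correcte et suit essentiellement la même voie que celle du texte : l'application du \texten{Theorem}~6 de \cite{zbMATH07362171} avec la dérivation auxiliaire \(\vartheta(f)=\dtau(f)-\tfrac{k}{4}\eisen_2 f\) est exactement la dérivation \(\theta=\dtau-\tfrac12\eisen_2\derpoids=\tfrac14(\Ob-\Eisen_1\dz)\) utilisée dans le texte, et la condition \(\dtau(\eisen_2)-\tfrac14\eisen_2^2=-\tfrac54\eisen_4\in\AlgQJell\) correspond à la relation \(\theta(x)=-x^2+h\) avec \(x=\tfrac14\eisen_2\) et \(h=-\tfrac{5}{16}\eisen_4\). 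La seule différence, mineure, est que vous vérifiez la stabilité de \(\AlgQJell\) par \(\vartheta\) générateur par générateur via les équations différentielles fondamentales, là où le texte l'obtient plus directement en écrivant \(\theta=\tfrac14(\Ob-\Eisen_1\dz)\) et en invoquant la stabilité de \(\AlgJS\) par \(\Ob\) et \(\dz\).
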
 
\begin{proof}
On utilise la méthode générale d'extension-restriction formulée au \texten{Theorem~6} de \cite{zbMATH07362171}. 
On considère pour cela l'inclusion \(A\subset R\) où l'on note \(R=\AlgQJS\) et \(A=\AlgQJell\).
On note \(\derpoids\) la dérivation de \(R\) de multiplication par la moitié du poids, c'est-à-dire définie par extension linéaire de
\begin{equation}
\derpoids(f)=\tfrac{k}{2}f\quad\text{pour tout \(f\in \QJSpoids{k}.\)}
\end{equation}
On introduit par ailleurs la dérivation de \(R\) définie par
\begin{equation}\label{defDelta}
\theta=\tfrac{1}{4}(\Ob-\Eisen_1\dz)=\dtau-\tfrac{1}{2}e_2\derpoids.
\end{equation} 
Il est clair que \(\derpoids(A) \subseteq A\). D'autre part \(A=\AlgJS[\Eisen_1]\), les dérivations \(\dz\) et \(\Ob\) stabilisent \(\AlgJS\) par la table~\vref{tab_stabderalginter}, donc \(\theta(\AlgJS)\subseteq A\) puis
\[
\theta(\Eisen_1)=\frac{1}{8}(\dz\wp+2\wp\Eisen_1)
\] 
grâce à~\eqref{eq_Euwped} et~\eqref{eq_ObEun}. On en déduit que \(\theta(A) \subseteq A\).

De plus la dérivation \(\theta\) est homogène de degré 2 pour la graduation définie par le poids sur \(R\) et on a
\begin{equation}\label{Deltatheta}
\derpoids\theta-\theta\derpoids=\theta.
\end{equation}
On pose \(x=\frac{1}{4}\eisen_2\), qui vérifie \(x\in R\) et \(x\notin A\). Il vérifie \(\derpoids(x)=x\) et~\eqref{eq_ObEdeux} montre que \(\theta(x)=-x^2-\frac{5}{16}\eisen_4\). En posant \(h=-\frac{5}{16}\eisen_4\), on a \(h\in A\) avec \(\derpoids(h)=2h\) et \(\theta(x)=-x^2+h\). 

On est donc exactement dans les conditions d'application du \texten{Theorem~6} de~\cite{zbMATH07362171} avec \(\dtau=\theta+2x\derpoids\), et l'on conclut que la suite \((\CM^{\dtau,\derpoids}_n)_{n\geq 0}\) des crochets de Connes-Moscovici associée aux deux dérivations \(\dtau\) et \(\derpoids\) définit par restriction à \(A\) une déformation formelle de \(A\). Or ces crochets ne sont autres que les crochets de Rankin-Cohen \(\left(\crochet{}{}{n}\right)_{n\geq 0}\) comme on le vérifie par un calcul combinatoire immédiat (voir par exemple la preuve de la \texten{Proposition~3} de  \cite{zbMATH07362171}).
\end{proof}
\begin{cor} La suite \((\crochet{}{}{n})_{n\geq 0}\) est une déformation formelle de \(\AlgQJell\), qui prolonge la suite des crochets de Rankin-Cohen classiques sur les formes modulaires.
\end{cor}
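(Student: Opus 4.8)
Le plan est de déduire le corollaire directement du théorème~\ref{RCtauA} et des points~\ref{item_deuxprop} et~\ref{item_troisprop} de la proposition~\ref{RCtauQ}, sans nouveau calcul. Je rappellerais d'abord que, d'après le point~\ref{item_deuxprop} de la proposition~\ref{RCtauQ}, la suite $(\crochet{}{}{n})_{n\geq 0}$ est une déformation formelle de $\AlgQJS$ : le crochet $\crochet{}{}{0}$ est le produit ordinaire et le produit $\CC[[\planck]]$-bilinéaire $f\star g=\sum_{n\geq 0}\crochet{f}{g}{n}\planck^{n}$ est associatif sur $\AlgQJS[[\planck]]$. J'invoquerais ensuite le théorème~\ref{RCtauA} : la sous-algèbre $\AlgQJell$ est stable par chacune des applications $\crochet{}{}{n}$, donc $\AlgQJell[[\planck]]$ est un sous-$\CC[[\planck]]$-module de $\AlgQJS[[\planck]]$ stable par $\star$ ; la restriction de $\star$ y est encore associative, unitaire, et se réduit modulo $\planck$ au produit ordinaire de $\AlgQJell$. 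On conclut que la suite $(\crochet{}{}{n})_{n\geq 0}$ restreinte à $\AlgQJell$ est une déformation formelle de $\AlgQJell$. Ce point est d'ailleurs déjà contenu dans la preuve du théorème~\ref{RCtauA}, où le \texten{Theorem~6} de~\cite{zbMATH07362171} a été invoqué précisément pour conclure que les crochets de Connes-Moscovici se restreignent à $A=\AlgQJell$ en une déformation formelle de $A$ ; il ne s'agit ici que d'en expliciter la conséquence.

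Pour l'assertion de prolongement, j'utiliserais la tour d'inclusions $\AlgM\subset\AlgJS\subset\AlgQJell$ rappelée au \S~\ref{subsec_jacell}. D'après le point~\ref{item_troisprop} de la proposition~\ref{RCtauQ}, la restriction de chaque $\crochet{}{}{n}$ à $\AlgM$ coïncide avec le crochet de Rankin-Cohen classique des formes modulaires ; comme $\AlgM\subseteq\AlgQJell$, la déformation formelle de $\AlgQJell$ obtenue ci-dessus se restreint le long de cette inclusion et redonne sur $\AlgM$ exactement ces crochets classiques. C'est précisément le sens du mot \emph{prolonge} dans l'énoncé.

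Je n'anticipe aucune difficulté réelle : tout le contenu est porté par le théorème~\ref{RCtauA} et la proposition~\ref{RCtauQ}, et il ne resterait qu'à observer le fait purement formel que les axiomes d'une déformation formelle (associativité ordre par ordre, compatibilité de l'unité, réduction modulo $\planck$) sont hérités par toute sous-algèbre stable par l'ensemble des opérations $\crochet{}{}{n}$ ; c'est l'unique point un tant soit peu délicat, et il est de pure routine. Pour une rédaction entièrement autonome, on pourrait sinon invoquer une dernière fois l'énoncé d'extension-restriction (\texten{Theorem~6} de~\cite{zbMATH07362171}) tel qu'appliqué dans la preuve du théorème~\ref{RCtauA}, qui fournit directement la déformation restreinte.
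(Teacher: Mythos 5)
Votre proposition est correcte et suit essentiellement la même voie que le texte : le corollaire est énoncé sans démonstration séparée précisément parce que la preuve du théorème~\ref{RCtauA} conclut déjà, via le théorème d'extension-restriction, que les crochets restreints à \(\AlgQJell\) en constituent une déformation formelle. L'assertion de prolongement découle, comme vous le dites, du point~\ref{item_troisprop} de la proposition~\ref{RCtauQ} joint à l'inclusion \(\AlgM\subset\AlgQJell\).
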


\begin{rem}\label{nonstabE}Les sous-algèbres \(\AlgQJmod\) et \(\AlgJS\) ne sont pas stables par les crochets \(\crochet{}{}{n}\). Par exemple, il résulte  de~\eqref{eq_dtaurho} et~\eqref{eq_ramae4} que \(\crochet{\eisen_4}{\wp}{1}\) est de profondeur \((0,1)\), il n’appartient donc à aucune de ces sous-algèbres. On construit dans ce qui suit une déformation formelle de \(\AlgJS\) qui prolonge les crochets de Rankin-Cohen classiques sur les formes modulaires. 
\end{rem}

\subsection{Crochets de Rankin-Cohen des formes de Jacobi singulières}\label{RCsing}
On commence par établir une variante de la proposition \ref{RCtauQ} en introduisant dans \(\AlgQJS\) la dérivation
\begin{equation}\label{defd}
d=\dtau +\frac{1}{4}\Eisen_1\dz=\frac{1}{4}\Ob+\frac 12 \eisen_2 \derpoids 
\end{equation}
où \(\derpoids\) est définie par la formule \eqref{defDelta}.  
\begin{prop}\label{RCdQ}Considérons la suite \((\Crochet{}{}{n})_{n\geq 0}\) d'applications de \(\AlgQJS\times\AlgQJS\) dans \(\AlgQJS\) définies par prolongement bilinéaire de
\begin{equation}\label{RCd}
\Crochet{f}{g}{n}=\sum_{r=0}^n(-1)^r\binom{k+n-1}{n-r}\binom{\ell+n-1}{r}d^r(f)d^{n-r}(g)
\end{equation}
pour tous \(f\in\QJSpoids{k},g\in\QJSpoids{\ell}\). Alors :
\begin{enumerate}[font=\normalfont,label={(\roman*)}]
\item \(\Crochet{\QJSpoids{k}}{\QJSpoids{\ell}}{n}\subset\QJSpoids{k+\ell+2n}\) pour tous \(n,k,\ell\geq 0\).
\item La suite \((\Crochet{}{}{n})_{n\geq 0}\) est une déformation formelle de \(\AlgQJS\).
\item La sous-algèbre \(\AlgM\) est stable par les applications \(\Crochet{}{}{n}\), leur restriction coïncidant avec les crochets de Rankin-Cohen classiques sur les formes modulaires.
\end{enumerate}
\end{prop}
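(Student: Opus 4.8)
The plan is to follow the proof of Proposition~\ref{RCtauQ} almost verbatim: the only new point to establish is that $d$ is again a derivation of $\AlgQJS$ that is homogeneous of degree~$2$ for the weight grading, after which the general algebraic machinery of \cite[\texten{Proposition}~3]{zbMATH07362171} does the rest, exactly as for the brackets $\crochet{}{}{n}$.

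First I would check that $d$ is a derivation of $\AlgQJS$. By Proposition~\ref{prop_stabder} the algebra $\AlgQJS$ is stable under both $\dtau$ and $\dz$, which are derivations; since $\AlgQJS$ is commutative and $\Eisen_1\in\AlgQJS$, the map $f\mapsto\frac14\Eisen_1\dz(f)$ is again a derivation of $\AlgQJS$ (for a derivation $D$ and a fixed element $a$ of a commutative algebra, $aD$ is a derivation). Hence $d=\dtau+\frac14\Eisen_1\dz$ is a derivation of $\AlgQJS$; equivalently, this can be read off the identity $d=\frac14\Ob+\frac12\eisen_2\derpoids$ of~\eqref{defd}, since $\Ob$ is a derivation by construction and $\derpoids$ is the derivation attached to the weight grading, so that $\eisen_2\derpoids$ is a derivation too.

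Next I would verify the homogeneity. By Proposition~\ref{prop_stabder}, $\dtau$ sends $\QJSpoids{k}$ into $\QJSpoids{k+2}$ and $\dz$ sends $\QJSpoids{k}$ into $\QJSpoids{k+1}$; because $\Eisen_1$ is a quasi-Jacobi form of weight~$1$, multiplication by $\Eisen_1$ carries $\QJSpoids{k+1}$ into $\QJSpoids{k+2}$, so $\frac14\Eisen_1\dz$ maps $\QJSpoids{k}$ into $\QJSpoids{k+2}$, whence $d(\QJSpoids{k})\subseteq\QJSpoids{k+2}$. Iterating, $d^r(f)\in\QJSpoids{k+2r}$ for $f\in\QJSpoids{k}$, and expanding~\eqref{RCd} together with the multiplicativity of the weight gives $\Crochet{\QJSpoids{k}}{\QJSpoids{\ell}}{n}\subseteq\QJSpoids{k+\ell+2n}$, which is~(i). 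With $d$ a degree-$2$ derivation of the weight-graded commutative algebra $\AlgQJS$ in hand, item~(ii) follows from \cite[\texten{Proposition}~3]{zbMATH07362171}: the brackets~\eqref{RCd} define a formal deformation of $\AlgQJS$. For~(iii) I would note that every element of $\AlgM$, indeed of $\AlgQM$, is identified with a function of $\tau$ alone and is therefore killed by $\dz$; hence $d$ and $\dtau$ coincide on $\AlgQM$, so $d^r(f)=\dtau^r(f)$ for $f\in\AlgM$ and all $r\geq0$, and consequently $\Crochet{f}{g}{n}=\crochet{f}{g}{n}$ for all $f,g\in\AlgM$. By Proposition~\ref{RCtauQ}\ref{item_troisprop} this common value lies in $\AlgM$ and agrees with the classical Rankin-Cohen bracket.

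There is no genuine obstacle here: the entire content is the bookkeeping that $d$ raises the weight by exactly~$2$, which rests on $\Eisen_1$ having weight~$1$ together with the weight shifts of $\dtau$ and $\dz$ already recorded in Proposition~\ref{prop_stabder}. The only point requiring a moment's care is the consistency of the two expressions for $d$ in~\eqref{defd}, a one-line computation using $\Ob(f)=4\dtau(f)+\Eisen_1\dz(f)-k\eisen_2 f$ on a weight-$k$ form $f$ and $\derpoids(f)=\frac{k}{2}f$.
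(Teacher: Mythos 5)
Your proof is correct and follows the same route as the paper: the paper's own proof simply observes that $d$ is a derivation homogeneous of degree $2$ and invokes Proposition~3 of \cite{zbMATH07362171}, which is exactly your argument with the homogeneity check (via $\Eisen_1$ having weight $1$) and the derivation property spelled out. Your treatment of point (iii) — that $\dz$ kills $\AlgM$ so $d=\dtau$ there and the brackets reduce to those of Proposition~\ref{RCtauQ} — is the intended (implicit) justification.
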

\begin{proof} La dérivation \(d\) est homogène de degré \(2\). Il suffit donc une nouvelle fois d'appliquer la \texten{Proposition~3} de \cite{zbMATH07362171}.
\end{proof}
L'algèbre \(\AlgJS\) n'est pas stable par la dérivation \(d\), en effet elle est stable par \(\Ob\) mais ne contient pas \(\eisen_2\). Elle l'est cependant par la déformation ci-dessus.
\begin{thm}\label{RCtauE}
La sous-algèbre \(\AlgJS\) est stable par la suite de crochets de Rankin-Cohen \(\left(\Crochet{}{}{n}\right)_{n\geq0}\).
\end{thm}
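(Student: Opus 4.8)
The plan is to mirror the proof of Theorem~\ref{RCtauA} almost verbatim, applying the extension--restriction criterion of \texten{Theorem~6} of~\cite{zbMATH07362171}, but this time to the inclusion $A\subset R$ with $A=\AlgJS$ and $R=\AlgQJmod=\AlgJS[\eisen_2]$ instead of $\AlgQJell\subset\AlgQJS$. The first thing to record is that the derivation $d$ stabilizes $R$: from~\eqref{defd} one has $d=\tfrac14\Ob+\tfrac12\eisen_2\derpoids$, the Oberdieck derivation stabilizes $\AlgQJmod$ (Proposition~\ref{prop_ObstabJS} and the remark following it), $\eisen_2\in\AlgQJmod$, and $\derpoids$ preserves the graded subalgebra $\AlgQJmod$; moreover $d$ is homogeneous of degree~$2$ for the weight grading. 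So the formal machinery of~\cite{zbMATH07362171} applies to the triple $(R,d,\derpoids)$, where $\derpoids$ denotes the half-weight derivation of $R$.

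Next I would exhibit the splitting of $d$ demanded by \texten{Theorem~6} of~\cite{zbMATH07362171}: set $\theta=\tfrac14\Ob$ and $x=\tfrac14\eisen_2$, so that $d=\theta+2x\derpoids$ by~\eqref{defd}. The hypotheses to verify are then: $\derpoids(A)\subseteq A$, which is clear; $\theta(A)\subseteq A$, which is exactly Proposition~\ref{prop_ObstabJS}~(2) (the Oberdieck derivation stabilizes $\AlgJS$) -- this is where the present case is in fact \emph{simpler} than Theorem~\ref{RCtauA}, since no separate check on a generator is needed; $\theta$ homogeneous of degree~$2$, hence $\derpoids\theta-\theta\derpoids=\theta$, immediate since $\Ob$ raises the weight by~$2$; $x\in R\setminus A$ with $\derpoids(x)=x$, clear because $\eisen_2$ has weight~$2$ and is transcendental over $\AlgJS$ by Lemma~\ref{lem_indepalgcinq}; and finally $\theta(x)=-x^2+h$ with $h\in A$ and $\derpoids(h)=2h$. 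This last identity is the only genuine computation, and it is supplied by~\eqref{eq_ObEdeux}: $\theta(x)=\tfrac1{16}\Ob(\eisen_2)=\tfrac1{16}(-\eisen_2^2-5\eisen_4)=-x^2-\tfrac5{16}\eisen_4$, so one takes $h=-\tfrac5{16}\eisen_4$, which lies in $\AlgJS$ and has weight~$4$.

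With every hypothesis in place I would invoke \texten{Theorem~6} of~\cite{zbMATH07362171}: the sequence $(\CM^{d,\derpoids}_n)_{n\geq 0}$ of Connes--Moscovici brackets attached to $d$ and $\derpoids$ restricts to $A$ and defines there a formal deformation of $A=\AlgJS$; in particular $\AlgJS$ is stable under these brackets. As in the proof of Proposition~\ref{RCdQ}, a short combinatorial computation identifies $\CM^{d,\derpoids}_n$ with the Rankin--Cohen bracket $\Crochet{}{}{n}$ of~\eqref{RCd}, which gives the statement; and since $\dz$ vanishes on $\AlgM$, the derivation $d$ restricts to $\dtau$ on $\AlgM$, so the $\Crochet{}{}{n}$ extend the classical Rankin--Cohen brackets.

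I do not expect a real obstacle: the whole difficulty has already been absorbed into the structural equation~\eqref{eq_ObEdeux} for $\Ob(\eisen_2)$ and into Proposition~\ref{prop_ObstabJS}. The only point needing a little care is the choice of the ambient algebra: one must work with $R=\AlgQJmod=A[x]$ (on which $d$ is an internal derivation, and in which $x=\tfrac14\eisen_2$ is a single transcendental generator over $A$) rather than with the larger $\AlgQJS$.
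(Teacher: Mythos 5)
Votre démonstration est correcte et suit essentiellement la même voie que celle de l'article : application du \texten{Theorem~6} de~\cite{zbMATH07362171} avec \(\theta=\tfrac14\Ob\), \(x=\tfrac14\eisen_2\), \(h=-\tfrac5{16}\eisen_4\) et la relation clé~\eqref{eq_ObEdeux}. La seule différence est le choix de l'algèbre ambiante (\(R=\AlgQJmod\) chez vous, \(R=\AlgQJS\) dans l'article), qui n'affecte pas l'argument puisque les crochets \(\Crochet{}{}{n}\) restreints à \(\AlgJS\) ne font intervenir que des itérés de \(d\) qui restent dans \(\AlgQJmod\).
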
 
\begin{proof}
On reprend la structure de la preuve du théorème \ref{RCtauA}, avec \(A\subset R\) pour \(R=\AlgQJS\) et \(A=\AlgJS\). On introduit cette fois la dérivation de \(R\) définie par
\(\theta'=\tfrac{1}{4}\Ob\).
D'après la proposition~\ref{prop_ObstabJS}, on a \(\derpoids(A)\subset A\) and \(\theta'(A)\subset A\).

Puisque \(\theta'\) est homogène de degré 2, on a encore
\begin{equation}\label{Deltathetaprime}
\derpoids\theta'-\theta'\derpoids=\theta'.
\end{equation}
Les mêmes éléments \(x=\frac{1}{4}\eisen_2\) et \(h=-\frac{5}{16}\eisen_4\) vérifient
\[
h\in A,\ \  x\in R, \ \  x\notin A, \ \ \derpoids(x)=x,\ \ \derpoids(h)=2h, \ \  \theta'(x)=-x^2+h.
\] 
On conclut donc exactement de la même façon en appliquant le \texten{Theorem 6} de \cite{zbMATH07362171} avec cette fois \(d=\theta'+2x\derpoids\), de sorte que la suite \((\CM^{d,\derpoids}_n)_{n\geq 0}\) des crochets de Connes-Moscovici associée aux deux dérivations \(d\) et \(\derpoids\) définit par restriction à \(A\) une déformation formelle de \(A\) qui coïncide  avec la suite des crochets de Rankin-Cohen \((\Crochet{}{}{n})_{n\geq 0}\) considérée ici.
\end{proof}
\begin{cor} La suite \(\left(\Crochet{}{}{n}\right)_{n\geq0}\) est une déformation formelle de \(\AlgJS\), qui prolonge la suite des crochets de Rankin-Cohen classiques sur les formes modulaires.
\end{cor}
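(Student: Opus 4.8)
The plan is to deduce the statement directly from Theorem~\ref{RCtauE} and Proposition~\ref{RCdQ}, using only the elementary principle that a formal deformation of an associative algebra restricts to a formal deformation of any subalgebra which is stable under all the terms of the deformation. First I would record that, by Theorem~\ref{RCtauE}, the subalgebra \(\AlgJS\) is stable under each operator \(\Crochet{}{}{n}\), so that the formal star product
\[
f\star_\planck g=\sum_{n\geq 0}\Crochet{f}{g}{n}\,\planck^n
\]
restricts to a well-defined map \(\AlgJS[[\planck]]\times\AlgJS[[\planck]]\to\AlgJS[[\planck]]\).

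Next I would observe that the axioms making \(\star_\planck\) a formal deformation of \(\AlgQJS\) --- namely \(\Crochet{f}{g}{0}=fg\), the associativity of \(\star_\planck\), and the fact that \(\Crochet{f}{g}{1}-\Crochet{g}{f}{1}\) is the prescribed (multiple of a) Poisson bracket --- are, by Proposition~\ref{RCdQ}\,(ii), identities valid in \(\AlgQJS[[\planck]]\); upon restricting the arguments to elements of \(\AlgJS\) these identities remain valid, and by the previous step the values stay in \(\AlgJS[[\planck]]\). Hence \((\Crochet{}{}{n})_{n\geq 0}\), restricted to \(\AlgJS\), is a formal deformation of \(\AlgJS\). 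In fact this is already contained in the proof of Theorem~\ref{RCtauE}: the application there of \texten{Theorem~6} of~\cite{zbMATH07362171} produces the Connes--Moscovici brackets as a formal deformation of \(A=\AlgJS\) by restriction, and that proof also identifies these brackets with the \(\Crochet{}{}{n}\); the present corollary simply makes that conclusion explicit.

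Finally, for the compatibility with the classical theory I would use the chain of inclusions \(\AlgM\subset\AlgJS\subset\AlgQJS\): Proposition~\ref{RCdQ}\,(iii) asserts that \(\AlgM\) is stable under the \(\Crochet{}{}{n}\) and that their restriction to \(\AlgM\) is the classical sequence of Rankin--Cohen brackets on modular forms. Therefore the formal deformation of \(\AlgJS\) obtained above, restricted further to the subalgebra \(\AlgM\), coincides with the classical Rankin--Cohen deformation, which is exactly the asserted prolongation. I do not expect any genuine obstacle: all the analytic and algebraic substance has already been invested in Theorems~\ref{thm_strucQJ} and~\ref{RCtauE} and in Proposition~\ref{RCdQ}, and the corollary is a formal consequence; the only point requiring (trivial) care is that the deformation axioms are universally quantified identities, hence are inherited by any subalgebra stable under all the \(\Crochet{}{}{n}\).
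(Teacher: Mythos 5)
Votre démonstration est correcte et suit exactement la voie que le texte sous-entend : le corollaire est une conséquence immédiate du théorème~\ref{RCtauE} (stabilité de \(\AlgJS\), dont la preuve via le \texten{Theorem~6} de~\cite{zbMATH07362171} fournit déjà la structure de déformation formelle par restriction) et du point~(iii) de la proposition~\ref{RCdQ} pour la coïncidence avec les crochets classiques sur \(\AlgM\). Rien à redire.
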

\begin{rem}
La construction des crochets \eqref{RCd} et la stabilité de \(\AlgJS\) sont démontrées de façon différente dans \cite[\texten{Proposition}~2.15]{zbMATH05953688}.
\end{rem}
\begin{rem}
D'après la remarque \ref{nonstabE}, la sous-algèbre \(\AlgQJmod\) n'est pas stable par \((\crochet{}{}{n})_{n\geq 0}\). En revanche, elle est trivialement stable par \((\Crochet{}{}{n})_{n\geq 0}\), puisque \(\AlgQJmod\) est stable par \(\Ob\). On montre que \(\Crochet{\Eisen_1}{\eisen_4}{1}\) est de profondeur modulaire \(1\) (par exemple en utilisant~\eqref{eq_Obeq} et~\eqref{eq_ObEun}) de sorte que \(\AlgQJell\) n'est pas stable par \(\left(\Crochet{}{}{n}\right)_{n\geq0}\). 
\end{rem}
\begin{rem}
 La construction des crochets de Rankin-Cohen aux propositions \ref{RCtauQ} et \ref{RCdQ} repose sur les relations \eqref{Deltatheta} et \eqref{Deltathetaprime} vérifiées pour les dérivations utilisées. Une construction très différente de déformation formelle de l'algèbre \(\AlgQJS\) est proposée dans ce qui suit, en utilisant les dérivations \(\dtau\) et \(\dz\), qui vérifient \(\dtau \circ \dz = \dz \circ \dtau\).
\end{rem}

\subsection{Transvectants des formes quasi-Jacobi de type quasimodulaire}\label{subsec_transvec}
\begin{prop}\label{TransQ} 
Considérons la suite \((\accol{}{}{n})_{n\geq 0}\) d'applications bilinéaires de \(\AlgQJS\times\AlgQJS\) dans \(\AlgQJS\) définies par 
\begin{equation}\label{eq_TransQ}
\accol{f}{g}{n}=\sum_{r=0}^n{(-1)^r}\binom{n}{r}\dtau^{n-r}\dz^{r}(f)\dtau^{r}\dz^{n-r}(g)\quad f,g\in \AlgQJS  
\end{equation}
\begin{enumerate}[font=\normalfont,label={(\roman*)}]
\item\label{item_schubert} La suite \((\dfrac1{n!}\accol{}{}{n})_{n\geq 0}\) est une déformation formelle de \(\AlgQJS\).
\item\label{item_beethoven} \(\accol{\QJSpoids{k}}{\QJSpoids{\ell}}{n}\subset \QJSpoids{k+\ell+3n}\) pour tous \(n,k,\ell\geq 0\).
\end{enumerate}
\end{prop}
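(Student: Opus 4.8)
The plan is to treat the two assertions in reverse order, since~\ref{item_beethoven} is a bookkeeping statement about the weight grading while~\ref{item_schubert} is the substantive claim, and to recognize $\sum_{n}\frac1{n!}\accol{f}{g}{n}$ as the Moyal-type star product attached to the pair of \emph{commuting} derivations $\dtau,\dz$ of $\AlgQJS$ (this is the point flagged in the remark preceding the statement).

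For~\ref{item_beethoven}: by Proposition~\ref{prop_stabder} the derivation $\dz$ is homogeneous of degree $1$ and $\dtau$ of degree $2$ for the weight grading, and both stabilize $\AlgQJS$. Hence, for $f\in\QJSpoids{k}$ and $g\in\QJSpoids{\ell}$, each summand $\dtau^{n-r}\dz^{r}(f)\,\dtau^{r}\dz^{n-r}(g)$ of~\eqref{eq_TransQ} lies in $\QJSpoids{k'}$ with $k'=\bigl(k+2(n-r)+r\bigr)+\bigl(\ell+2r+(n-r)\bigr)=k+\ell+3n$, independently of $r$; summing over $r$ gives $\accol{f}{g}{n}\in\QJSpoids{k+\ell+3n}$. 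In particular $\accol{}{}{n}$ is a bidifferential operator, homogeneous of degree $3n$ — exactly the homogeneity that will make the deformation in~\ref{item_schubert} compatible with the grading.

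For~\ref{item_schubert}: write $m\colon\AlgQJS\otimes\AlgQJS\to\AlgQJS$ for the multiplication and set $P=\dtau\otimes\dz-\dz\otimes\dtau$, a biderivation of $\AlgQJS$. Since $\dtau\circ\dz=\dz\circ\dtau$, the operators $\dtau\otimes\dz$ and $\dz\otimes\dtau$ commute, so $P^{n}=\sum_{r=0}^{n}(-1)^{r}\binom{n}{r}(\dtau^{n-r}\dz^{r})\otimes(\dtau^{r}\dz^{n-r})$ by the binomial theorem, whence $m\circ P^{n}(f\otimes g)=\accol{f}{g}{n}$ and therefore $f\star g:=\sum_{n\geq0}\frac{\planck^{n}}{n!}\accol{f}{g}{n}=m\circ\exp(\planck P)(f\otimes g)$. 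It remains to show that $\star$ is an associative $\CC[[\planck]]$-bilinear product on $\AlgQJS[[\planck]]$ with unit $1$; granting this, since each bracket is bidifferential and $\accol{f}{g}{0}=fg$, the sequence $(\tfrac1{n!}\accol{}{}{n})_{n\geq0}$ is a formal deformation of $\AlgQJS$, and it restricts to $\AlgQJS[[\planck]]$ because $\AlgQJS$ is $\dtau$- and $\dz$-stable. The unit property follows from $\dtau(1)=\dz(1)=0$. For associativity I would argue as in the classical Moyal computation for a constant Poisson structure ($\dtau\wedge\dz$): for $N\geq1$ let $\mu^{(N)}\colon\AlgQJS^{\otimes N}\to\AlgQJS$ be the iterated product and, for $1\leq i<j\leq N$, let $P_{ij}$ act as $\dtau$ in slot $i$ times $\dz$ in slot $j$ minus $\dz$ in slot $i$ times $\dtau$ in slot $j$; because $[\dtau,\dz]=0$ all the $P_{ij}$ pairwise commute, so $\mu^{(N)}_{\planck}:=\mu^{(N)}\circ\exp\bigl(\planck\sum_{i<j}P_{ij}\bigr)$ is well defined. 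Using the iterated Leibniz rule $D\circ\mu^{(j)}=\mu^{(j)}\circ\sum_{a\leq j}D^{(a)}$ for $D\in\{\dtau,\dz\}$, one verifies the cut identity $\mu^{(N)}_{\planck}=\mu^{(2)}_{\planck}\circ\bigl(\mu^{(j)}_{\planck}\otimes\mu^{(N-j)}_{\planck}\bigr)$ for each $1\leq j\leq N-1$: the factors $P_{ab}$ with $a<b\leq j$ (resp.\ $j<a<b$) commute with multiplying the remaining slots and are absorbed into $\mu^{(j)}_{\planck}$ (resp.\ $\mu^{(N-j)}_{\planck}$), while the cross factors with $a\leq j<b$ collapse, after the slots $1,\dots,j$ and $j+1,\dots,N$ are each multiplied together, to the single operator $P_{12}$ on the two resulting slots. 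Taking $N=3$ with $j=1$ and $j=2$ yields $f\star(g\star h)=\mu^{(3)}_{\planck}(f\otimes g\otimes h)=(f\star g)\star h$, i.e.\ associativity, and~\ref{item_schubert} follows; the first-order bracket is the Jacobian Poisson bracket $\accol{f}{g}{1}=\dtau(f)\dz(g)-\dz(f)\dtau(g)$.

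The main obstacle is the associativity step in~\ref{item_schubert}: although it is the textbook Moyal product of a constant Poisson bivector, carrying it out cleanly requires the exponential/``cut'' bookkeeping above (or, alternatively, an appeal to the deformation-quantization literature for that standard fact). Everything else — the weight count in~\ref{item_beethoven}, bidifferentiality, the unit, the identification of the brackets with $m\circ P^{n}$, and the passage from $\AlgQJS$ with its two commuting stable derivations to the restricted deformation on $\AlgQJS[[\planck]]$ — is routine.
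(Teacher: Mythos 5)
Votre démonstration est correcte et suit essentiellement la même voie que celle de l'article : le point~\ref{item_beethoven} par le décompte des degrés ($\dtau$ homogène de degré $2$, $\dz$ de degré $1$), et le point~\ref{item_schubert} en identifiant $\sum_n\frac{\planck^n}{n!}\accol{}{}{n}$ au produit de Moyal $m\circ\exp\bigl(\planck(\dtau\otimes\dz-\dz\otimes\dtau)\bigr)$ associé aux deux dérivations commutantes. La seule différence est que l'article renvoie à la littérature classique (théorie des invariants, associativité du produit de Moyal) pour ce dernier point, tandis que vous en rédigez la vérification standard via l'identité de découpe.
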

\begin{proof}
Le point~\ref{item_schubert} est un résultat classique de théorie des invariants correspondant à l'associativité du produit de Moyal (voir par exemple \cite[\texten{Proposition}~5.20]{zbMATH01516969}). 
Le point~\ref{item_beethoven} découle du fait que \(\dtau\) et \(\dz\) sont homogènes de degrés respectifs 2 et 1. 
\end{proof}
\begin{rem}
On rappelle les deux propriétés suivantes générales des transvectants utilisées dans la suite. D'une part ils satisfont la relation de recurrence :
\begin{equation}\label{rectrans}
\accol{f}{g}{n+1}=\accol{\dtau f}{\dz g}{n}-\accol{\dz f}{\dtau g}{n}
\end{equation}
initialisée par le fait que \(\accol{}{}{0}\) est le produit dans \(\AlgQJS\times\AlgQJS\), et \(\accol{}{}{1}\) est le crochet de Poisson \(\dtau\wedge\dz\)~:
\[
\accol{f}{g}{0}=fg \qquad \text{ et } \qquad \accol{f}{g}{1}=\dtau(f)\dz(g)-\dz(f)\dtau(g). 
\]
D'autre part l'associativité du star-produit défini sur \(\AlgQJS[[\planck]]\) à partir de
\begin{equation}\label{star}
\forall(f,g)\in\AlgQJS\times\AlgQJS\qquad f\star g=\sum_{n\geq 0}\dfrac 1{n!}\{f,g\}_n\planck^n
\end{equation} 
équivaut à : 
\begin{equation}\label{asstrans}
\forall(f,g,h)\in\AlgQJS\times\AlgQJS\times\AlgQJS\qquad\sum_{r=0}^n\binom nr\accol*{\accol{f}{g}{r}}{h}{n-r}=\sum_{r=0}^n\binom nr\accol*{f}{\accol{g}{h}{r}}{n-r}. 
\end{equation}
\end{rem}
On a vu \S~\ref{sec_fqjm} que  \(\AlgQJmod\) est stable par \(\dz\) mais pas par \(\dtau\). Elle l'est cependant par les transvectants comme on va le voir ci-dessous. La preuve nécessite quelques résultats techniques préliminaires.
\begin{lem}\label{lemmeA}On considère la dérivation \(d=\dtau +\tfrac{1}{4}\Eisen_1\dz\) de \(\AlgQJS\) ; on a :
\begin{enumerate}[font=\normalfont,label={(\roman*)}]
\item \(d(f)\in\AlgQJmod\) et \(\{f, g\}_1 \in \AlgQJmod\) pour tous \(f,g\in\AlgQJmod\) ;
\item \(d(E_1)\in\AlgQJmod\) et \(\{f, E_1\}_1 \in \AlgQJmod\) pour tout \(f\in\AlgQJmod\).
\end{enumerate}
\end{lem}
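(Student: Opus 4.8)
The plan is to reduce everything to facts already available: $\Ob$ stabilizes $\AlgQJmod$ (the remark following Proposition~\ref{prop_ObstabJS}), $\dz$ stabilizes $\AlgQJmod$ (\S\ref{sec_fqjm}), $\eisen_2\in\AlgQJmod$, and the two expressions for $d$ recorded in~\eqref{defd}, namely $d=\dtau+\tfrac14\Eisen_1\dz=\tfrac14\Ob+\tfrac12\eisen_2\derpoids$, where $\derpoids$ multiplies a form by half its weight.

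First I would prove $d(\AlgQJmod)\subseteq\AlgQJmod$. The derivation $\derpoids$ respects the weight grading, so it stabilizes $\AlgQJmod$; since $\eisen_2\in\AlgQJmod$, so does multiplication by $\eisen_2$; and $\Ob(\AlgQJmod)\subseteq\AlgQJmod$. Hence, for $f\in\AlgQJmod$, $d(f)=\tfrac14\Ob(f)+\tfrac12\eisen_2\derpoids(f)\in\AlgQJmod$, which is the first half of~(i). For the first transvectant, the key remark is that the correction term $\tfrac14\Eisen_1\dz$ in $d$ disappears: writing out $d(f)\dz(g)-\dz(f)d(g)$ and recalling that $\accol{f}{g}{1}=\dtau(f)\dz(g)-\dz(f)\dtau(g)$ (remark after Proposition~\ref{TransQ}), the two copies of $\tfrac14\Eisen_1\dz(f)\dz(g)$ cancel and one gets
\[
\accol{f}{g}{1}=d(f)\dz(g)-\dz(f)d(g).
\]
Since $d(f),d(g),\dz(f),\dz(g)$ all lie in $\AlgQJmod$ by the previous step and by $\dz$-stability, so does $\accol{f}{g}{1}$; this proves~(i).

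For~(ii) I would first evaluate $d(\Eisen_1)$. Plugging $\dz\Eisen_1=-\wp-\eisen_2$ from~\eqref{eq_Euwped} and $4\dtau\Eisen_1=\Eisen_1\eisen_2+\wp\Eisen_1+\tfrac12\dz\wp$ from~\eqref{eq_dtauEun} into $d=\dtau+\tfrac14\Eisen_1\dz$ (or, equivalently, $\Ob(\Eisen_1)=\tfrac12\dz\wp-\Eisen_1\eisen_2$ from~\eqref{eq_ObEun} together with $\derpoids(\Eisen_1)=\tfrac12\Eisen_1$ into $d=\tfrac14\Ob+\tfrac12\eisen_2\derpoids$), all the terms linear in $\Eisen_1$ cancel and one is left with
\[
d(\Eisen_1)=\tfrac18\dz\wp\in\AlgJS\subset\AlgQJmod.
\]
Applying the same identity to the pair $(f,\Eisen_1)$ gives $\accol{f}{\Eisen_1}{1}=d(f)\dz(\Eisen_1)-\dz(f)d(\Eisen_1)$, and since $d(f)\in\AlgQJmod$, $\dz(\Eisen_1)=-\wp-\eisen_2\in\AlgQJmod$, $\dz(f)\in\AlgQJmod$ and $d(\Eisen_1)=\tfrac18\dz\wp\in\AlgQJmod$, it follows that $\accol{f}{\Eisen_1}{1}\in\AlgQJmod$.

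The argument is essentially formal; there is no genuine obstacle, only the small verification that the $\Eisen_1$-linear contributions to $d(\Eisen_1)$ cancel. This cancellation is the concrete shadow of the reason the lemma is true: $d$ is the modification of $\dtau$ by a multiple of $\dz$ that eliminates the $\Eisen_1$-dependence and so stabilizes $\AlgQJmod$, while the Poisson bracket $\accol{\cdot}{\cdot}{1}$, rewritten with $d$ in place of $\dtau$, never leaves $\AlgQJmod$.
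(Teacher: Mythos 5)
Votre démonstration est correcte et suit essentiellement la même démarche que celle du texte : l'écriture \(d=\tfrac14\Ob+\tfrac12\eisen_2\derpoids\) pour la stabilité de \(\AlgQJmod\) par \(d\), l'identité \(\accol{f}{g}{1}=d(f)\dz(g)-\dz(f)d(g)\) obtenue par annulation des termes en \(\Eisen_1\dz(f)\dz(g)\), le calcul \(d(\Eisen_1)=\tfrac18\dz\wp\) et la conclusion pour \(\accol{f}{\Eisen_1}{1}\) via~\eqref{eq_Euwped}. Rien à redire.
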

\begin{proof}
On a déjà considéré en \eqref{defd} la dérivation \(d=\dfrac 14 \Ob + \dfrac 12 \eisen_2 \derpoids\). L'algèbre \(\AlgQJmod=\AlgJS[\eisen_2]\) est stable par \(\Ob\) d'après le \S~\ref{subsubsec_application} et elle est donc stable par \(d\). On calcule pour tous \(f,g\in \AlgQJmod\) :
\[ 
\accol{f}{g}{1}=\partial_{\tau}(f)\dz(g)-\dz(f)\partial_{\tau}(g)=d(f)\dz(g)-\dz(f)d(g)\in\AlgQJmod
\]
puisque \(\AlgQJmod\) est stable par \(d\) et par \(\dz\) d'après le \S~\ref{sec_fqjm}. 

Il résulte de~\eqref{eq_ObEun} que
 \begin{equation}\label{eq_dEun}
 d(\Eisen_1)=\tfrac{1}{8}\dz\wp\in\AlgJS\subseteq\AlgQJmod.
\end{equation}
Enfin, grâce à \eqref{eq_Euwped} : 
\[
\accol{f}{\Eisen_1}{1}=d(f)\dz(\Eisen_1)-d(\Eisen_1)\dz(f)=-(\wp+\eisen_2)d(f)-\frac{1}{8}\dz(f)\dz\wp\in \AlgQJmod. 
\]
\end{proof}
\begin{rem}
Pour tout \(n\in\N\), on a \(d(\Eisen_1^n)=\tfrac{n}{8}(\dz\wp)\Eisen_1^{n-1}\in\AlgQJell=\AlgJS[\Eisen_1]\). Mais \(\AlgQJell\) n'est pas stable par \(d\) puisque par exemple \(d\wp=\tfrac{1}{4}\Ob(\wp)+\tfrac{1}{2}\wp\eisen_2\) avec \(\Ob(\wp)\in\AlgJS\) (voir proposition~\ref{prop_ObstabJS}) et \(\wp\eisen_2\notin\AlgQJell\). 
\end{rem}
\begin{lem}\label{lemmeB}
Soit \(n \geq 1\) un entier vérifiant les deux propriétés suivantes : 
\begin{enumerate}[font=\normalfont,label={(H\arabic*)}]
 \item\label{item_hypun} pour tous \(f, g \in \AlgQJmod\), on a  \(\accol{f}{g}{n}\in \AlgQJmod\) ;
 \item\label{item_hypdeux} pour tous \(f, g \in \AlgQJmod\), on a \(\accol{f\Eisen_1}{g}{n}-\accol{f}{g\Eisen_1}{n}\in\AlgQJmod\). 
 \end{enumerate}
 Alors, pour tous \(f, g \in \AlgQJmod\), on a \(\accol{f}{g}{n+1} \in\AlgQJmod\) et \(\accol{f}{\Eisen_1}{n+1}\in\AlgQJmod\). 
\end{lem}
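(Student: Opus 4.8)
Le plan est de traiter le passage de $n$ à $n+1$ au moyen de la relation de récurrence~\eqref{rectrans}, combinée à la décomposition $\dtau=d-\tfrac14\Eisen_1\dz$ où $d=\dtau+\tfrac14\Eisen_1\dz$ est la dérivation corrigée du lemme~\ref{lemmeA}. Le point décisif est que $d$ stabilise $\AlgQJmod$ (lemme~\ref{lemmeA}(i)) et que $\dz$ aussi (d'après le~\S\ref{sec_fqjm}) ; ainsi, après développement de~\eqref{rectrans} à l'aide de cette décomposition et de la bilinéarité de $\accol{}{}{n}$, chaque terme produit aura soit tous ses arguments dans $\AlgQJmod$ — et relève alors de l'hypothèse~(H1) —, soit se regroupera en une unique différence de la forme exacte contrôlée par l'hypothèse~(H2). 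Toutes les égalités ci-dessous sont à lire modulo $\AlgQJmod$.

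On traite d'abord $\accol{f}{g}{n+1}$ pour $f,g\in\AlgQJmod$. En substituant $\dtau f=df-\tfrac14\Eisen_1\dz f$ et $\dtau g=dg-\tfrac14\Eisen_1\dz g$ dans~\eqref{rectrans}, on obtient
\[
\accol{f}{g}{n+1}=\accol{df}{\dz g}{n}-\accol{\dz f}{dg}{n}-\tfrac14\bigl(\accol{(\dz f)\Eisen_1}{\dz g}{n}-\accol{\dz f}{(\dz g)\Eisen_1}{n}\bigr).
\]
Comme $df,dg,\dz f,\dz g\in\AlgQJmod$, les deux premiers transvectants sont dans $\AlgQJmod$ par~(H1), et la différence entre parenthèses est dans $\AlgQJmod$ par~(H2) appliquée avec $\dz f$ et $\dz g$ dans les rôles de $f$ et $g$. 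Donc $\accol{f}{g}{n+1}\in\AlgQJmod$.

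On traite ensuite $\accol{f}{\Eisen_1}{n+1}$ pour $f\in\AlgQJmod$, en appliquant~\eqref{rectrans} au couple $(f,\Eisen_1)$. D'après~\eqref{eq_Euwped}, l'élément $u:=\dz\Eisen_1=-\wp-\eisen_2$ appartient à $\AlgQJmod$, et d'après~\eqref{eq_dEun} on a $d(\Eisen_1)=\tfrac18\dz\wp\in\AlgJS\subseteq\AlgQJmod$, d'où $\dtau\Eisen_1=d\Eisen_1-\tfrac14\Eisen_1\dz\Eisen_1=\tfrac18\dz\wp-\tfrac14\Eisen_1 u$ (ce qui coïncide avec~\eqref{eq_dtauEun}). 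En reportant ceci et $\dtau f=df-\tfrac14\Eisen_1\dz f$ dans~\eqref{rectrans}, il vient
\[
\accol{f}{\Eisen_1}{n+1}=\accol{df}{u}{n}-\tfrac18\accol{\dz f}{\dz\wp}{n}-\tfrac14\bigl(\accol{(\dz f)\Eisen_1}{u}{n}-\accol{\dz f}{u\Eisen_1}{n}\bigr).
\]
Ici $df,u,\dz f,\dz\wp\in\AlgQJmod$, donc les deux premiers transvectants sont dans $\AlgQJmod$ par~(H1), et la différence entre parenthèses est dans $\AlgQJmod$ par~(H2) appliquée avec $\dz f$ et $u$ dans les rôles de $f$ et $g$. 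Ainsi $\accol{f}{\Eisen_1}{n+1}\in\AlgQJmod$, ce qui achève le pas de récurrence.

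Il n'y a ici aucune difficulté analytique : la relation~\eqref{rectrans} et la stabilité de $d$ et de $\dz$ assurent tout le travail structurel. Le seul point délicat est de regrouper les morceaux portant $\Eisen_1$ engendrés par $\dtau=d-\tfrac14\Eisen_1\dz$ en une différence de la forme précise requise par~(H2), les deux autres arguments étant dans $\AlgQJmod$ ; c'est pourquoi l'on réécrit $\Eisen_1\dz f$ sous la forme $(\dz f)\Eisen_1$, et pourquoi dans le second calcul on conserve $\dtau\Eisen_1$ sous la forme compacte $\tfrac18\dz\wp-\tfrac14\Eisen_1 u$ plutôt que développée comme en~\eqref{eq_dtauEun}.
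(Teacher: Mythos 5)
Votre démonstration est correcte et suit essentiellement la même démarche que celle de l'article : vous injectez la décomposition \(\dtau=d-\tfrac14\Eisen_1\dz\) dans la relation de récurrence~\eqref{rectrans}, ce qui donne exactement l'identité utilisée par les auteurs, puis vous concluez par~\ref{item_hypun} pour les termes faisant intervenir \(d\) et \(\dz\) et par~\ref{item_hypdeux} pour la différence portant \(\Eisen_1\). Votre traitement du cas \(g=\Eisen_1\) est simplement plus explicite que la phrase correspondante de l'article, qui se contente d'observer que \(\dz(\Eisen_1)\) et \(d(\Eisen_1)\) appartiennent à \(\AlgQJmod\).
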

\begin{proof}D'après la formule de récurrence \eqref{rectrans}, on a 
\begin{align*}
\accol{f}{g}{n+1} &=\accol{\dtau f}{\dz g}{n}-\accol{\dz f}{\dtau g}{n}\\
&=-\tfrac{1}{4}\left(\accol{\dz(f)\Eisen_1}{\dz(g)}{n}-\accol{\dz(f)}{\dz(g)\Eisen_1}{n}\right)+\left(\accol{d(f)}{\dz(g)}{n}- \accol{\dz(f)}{d(g)}{n}\right).
\end{align*}
Or \(\accol{\dz(f)\Eisen_1}{\dz(g)}{n}-\accol{\dz(f)}{\dz(g)\Eisen_1}{n}\in\AlgQJmod\) d'après l'hypothèse~\ref{item_hypdeux} appliquée aux éléments \(\dz(f)\) et \(\dz(g)\) de \(\AlgQJmod\). De même comme \(d(f)\) et \(d(g)\) appartiennent à \(\AlgQJmod\) d'après le lemme \ref{lemmeA}, la différence \(\{d(f), \dz(g)\}_n- \{\dz(f), d(g)\}_n\) est aussi un élément de \(\AlgQJmod\) par l'hypothèse~\ref{item_hypun} . On conclut que \(\accol{f}{g}{n+1}\in\AlgQJmod\).
La même démonstration s'applique à \(f \in \AlgQJmod\) et \(g =\Eisen_1\) puisque \(\dz(\Eisen_1)\) et \(d(\Eisen_1)\) sont des éléments de \(\AlgQJmod\) d'après~\eqref{eq_Euwped} et \eqref{eq_dEun}. On a donc \(\accol{f}{\Eisen_1}{n+1}\in\AlgQJmod\), ce qui achève la preuve.
\end{proof}

\begin{lem}\label{lemmeC}
 Pour tout \(n \geq 1\) et toutes \(f,g\in\AlgQJS\), on a :
 \begin{multline*}
 \accol{f\Eisen_1}{g}{n}-\accol{f}{g\Eisen_1}{n}=f\accol{\Eisen_1}{g}{n} +(-1)^{n-1} g\accol{\Eisen_1}{f}{n}\\-\sum_{i=1}^{n-1}
\binom{n}{i} \left(
 \accol*{\accol{f}{\Eisen_1}{i}}{g}{n-i}+(-1)^{n-1}\accol*{\accol{g}{\Eisen_1}{i}}{f}{n-i}
 \right). 
 \end{multline*}
\end{lem}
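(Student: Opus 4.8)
The plan is to obtain the identity as a purely formal consequence of the associativity of the transvectant star-product, combined with the antisymmetry of the brackets. First I would record the antisymmetry relation
\[
\accol{a}{b}{m}=(-1)^m\accol{b}{a}{m}\qquad(a,b\in\AlgQJS,\ m\geq 0),
\]
which follows at once from the defining formula~\eqref{eq_TransQ} by the change of summation index $r\mapsto m-r$ (it is, of course, the classical antisymmetry of the $m$-th Moyal bracket), together with the convention $\accol{a}{b}{0}=ab$ recalled after Proposition~\ref{TransQ}.

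Next I would apply the associativity relation~\eqref{asstrans} to the triple $(f,\Eisen_1,g)$, obtaining $\sum_{r=0}^n\binom nr\accol*{\accol{f}{\Eisen_1}{r}}{g}{n-r}=\sum_{r=0}^n\binom nr\accol*{f}{\accol{\Eisen_1}{g}{r}}{n-r}$, and then split off from each side the two boundary terms $r=0$ and $r=n$. Using $\accol{a}{b}{0}=ab$ and the commutativity of $\AlgQJS$, the left-hand boundary terms are $\accol{f\Eisen_1}{g}{n}$ (for $r=0$) and $g\,\accol{f}{\Eisen_1}{n}$ (for $r=n$), while the right-hand ones are $\accol{f}{g\Eisen_1}{n}$ and $f\,\accol{\Eisen_1}{g}{n}$; this gives
\begin{multline*}
\accol{f\Eisen_1}{g}{n}+g\,\accol{f}{\Eisen_1}{n}+\sum_{i=1}^{n-1}\binom{n}{i}\accol*{\accol{f}{\Eisen_1}{i}}{g}{n-i}\\
=\accol{f}{g\Eisen_1}{n}+f\,\accol{\Eisen_1}{g}{n}+\sum_{j=1}^{n-1}\binom{n}{j}\accol*{f}{\accol{\Eisen_1}{g}{j}}{n-j}.
\end{multline*}
Transposing $g\,\accol{f}{\Eisen_1}{n}$ to the right and using antisymmetry once to write $-g\,\accol{f}{\Eisen_1}{n}=(-1)^{n-1}g\,\accol{\Eisen_1}{f}{n}$ produces the first two terms of the claimed right-hand side, together with the interior sum $-\sum_{i=1}^{n-1}\binom{n}{i}\accol*{\accol{f}{\Eisen_1}{i}}{g}{n-i}$ and a surviving interior sum $\sum_{j=1}^{n-1}\binom{n}{j}\accol*{f}{\accol{\Eisen_1}{g}{j}}{n-j}$ still to be transformed.

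It remains to recast the latter sum. Applying antisymmetry in the outer bracket and then in the inner one gives
\[
\accol*{f}{\accol{\Eisen_1}{g}{j}}{n-j}=(-1)^{n-j}\accol*{\accol{\Eisen_1}{g}{j}}{f}{n-j}=(-1)^{n}\accol*{\accol{g}{\Eisen_1}{j}}{f}{n-j}=-(-1)^{n-1}\accol*{\accol{g}{\Eisen_1}{j}}{f}{n-j}.
\]
Substituting this, relabelling $j$ as $i$, and regrouping the two interior sums under a single $\sum_{i=1}^{n-1}\binom{n}{i}$, one obtains exactly the asserted identity. There is no genuine obstacle here; the only delicate point is the careful bookkeeping of signs and, above all, the clean separation of the $r=0$ and $r=n$ boundary terms coming from~\eqref{asstrans} before antisymmetry is invoked. (Alternatively, the same identity can be proved directly by induction on $n$ from the recurrence~\eqref{rectrans}, but the route through~\eqref{asstrans} is shorter.)
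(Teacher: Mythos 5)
Your argument is correct and is essentially the paper's own proof: both rest on the $(-1)^j$-symmetry of the brackets together with the associativity identity~\eqref{asstrans} applied to the triple $(f,\Eisen_1,g)$, i.e.\ $(f\star\Eisen_1)\star g=f\star(\Eisen_1\star g)$. The only (immaterial) difference is the direction of the manipulation — you derive the claimed identity from~\eqref{asstrans} by isolating the $r=0$ and $r=n$ terms, whereas the paper rewrites the claimed identity until it becomes~\eqref{asstrans}.
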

\begin{proof}
D'une part on peut réécrire chaque produit comme un crochet \(\accol{}{}{0}\), d'autre part pour tout \(0\leq j\leq n\) le crochet \(\accol{}{}{j}\) est \((-1)^j\)-symétrique. L'égalité voulue peut donc être reformulée sous la forme
\begin{multline*}
\accol*{\accol{f}{\Eisen_1}{0}}{g}{n}-\accol*{f}{\accol{\Eisen_1}{g}{0}}{n}=\accol*{f}{\accol{\Eisen_1}{g}{n}}{0}-\accol*{\accol{f}{\Eisen_1}{n}}{g}{0}\\
 -\sum_{i=1}^{n-1}\binom ni\accol*{\accol{f}{\Eisen_1}{i}}{g}{n-i}+\sum_{i=1}^{n-1}\binom ni\accol*{f}{\accol{\Eisen_1}{g}{i}}{n-i}
 \end{multline*}
 c'est-à-dire
\[
\sum_{i=0}^n\binom ni\accol*{\accol{f}{\Eisen_1}{i}}{g}{n-i}=\sum_{i=0}^{n}\binom ni\accol*{f}{\accol{\Eisen_1}{g}{i}}{n-i}. 
\]
D'après \eqref{star} et \eqref{asstrans} cette identité traduit l'égalité \((f\star\Eisen_1)\star g=f\star(\Eisen_1\star g)\). Cette dernière égalité est satisfaite pour tous \(f\) et \(g\) dans \(\AlgQJS\) grâce au point~\ref{item_schubert} de la proposition~\ref{TransQ}. 
\end{proof}

\begin{lem}\label{lemmeD}
On a \(\accol{f}{g}{n}\in\AlgQJmod\) et \(\accol{f}{\Eisen_1}{n}\in\AlgQJmod\) pour tout \(n \geq 1\) et tous \(f,g\in \AlgQJmod\).
\end{lem}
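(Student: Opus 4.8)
The plan is to prove the two assertions simultaneously by strong induction on $n\geq 1$, the inductive hypothesis at level $n$ being: $\accol{f}{g}{n}\in\AlgQJmod$ for all $f,g\in\AlgQJmod$, together with $\accol{f}{\Eisen_1}{n}\in\AlgQJmod$ for all $f\in\AlgQJmod$. The base case $n=1$ is exactly lemma~\ref{lemmeA}, whose points~(i) and~(ii) furnish these two statements for $n=1$. For the inductive step I would assume the hypothesis holds at every level $m$ with $1\leq m\leq n$ and deduce it at level $n+1$ by checking that the hypotheses~\ref{item_hypun} and~\ref{item_hypdeux} of lemma~\ref{lemmeB} hold at level $n$; lemma~\ref{lemmeB} then delivers both $\accol{f}{g}{n+1}\in\AlgQJmod$ and $\accol{f}{\Eisen_1}{n+1}\in\AlgQJmod$.

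Hypothesis~\ref{item_hypun} at level $n$ is literally the first half of the inductive hypothesis, so there is nothing to do. For hypothesis~\ref{item_hypdeux} the idea is to substitute the identity of lemma~\ref{lemmeC} and treat the resulting terms one by one. The two ``boundary'' terms $f\accol{\Eisen_1}{g}{n}$ and $g\accol{\Eisen_1}{f}{n}$ are handled using the $(-1)^n$-symmetry of $\accol{}{}{n}$ and the \emph{second} half of the inductive hypothesis at level $n$, which gives $\accol{\Eisen_1}{g}{n},\accol{\Eisen_1}{f}{n}\in\AlgQJmod$; multiplying by $f$, resp.\ $g$, stays in $\AlgQJmod$ since the latter is an algebra. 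In the sum over $1\leq i\leq n-1$ one has $1\leq i\leq n-1$ and $1\leq n-i\leq n-1$, so the second half of the inductive hypothesis at level $i$ gives $\accol{f}{\Eisen_1}{i},\accol{g}{\Eisen_1}{i}\in\AlgQJmod$, and then the first half at level $n-i$, applied to these elements and to $g$, resp.\ $f$, shows that each iterated transvectant $\accol*{\accol{f}{\Eisen_1}{i}}{g}{n-i}$ and $\accol*{\accol{g}{\Eisen_1}{i}}{f}{n-i}$ lies in $\AlgQJmod$. Hence the right-hand side of lemma~\ref{lemmeC} is in $\AlgQJmod$, i.e.\ hypothesis~\ref{item_hypdeux} holds, and the induction closes.

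Granted the preliminary lemmas, the argument is essentially bookkeeping, and the only subtlety is to notice that every index occurring in lemma~\ref{lemmeC} is strictly smaller than $n$, except for the two boundary terms which are exactly controlled by the level-$n$ instance of the second assertion being carried along in the induction. This is precisely why the inductive statement must bundle the generic transvectant with the special one $\accol{f}{\Eisen_1}{n}$: each fuels the other at the next level. The mathematical content is concentrated in lemmas~\ref{lemmeA} (which anchors the recursion), \ref{lemmeB} (which effects the step $n\to n+1$ through the recurrence~\eqref{rectrans}), and~\ref{lemmeC} (which, via the associativity of the star product in point~(i) of proposition~\ref{TransQ}, turns the ``extra $\Eisen_1$'' defect into lower-order data); so the main obstacle, if any, was already overcome in establishing those three lemmas.
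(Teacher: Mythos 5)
Votre démonstration est correcte et suit exactement la même stratégie que celle de l'article : récurrence forte sur \(n\) portant simultanément sur les deux assertions, initialisée par le lemme~\ref{lemmeA}, l'hypothèse~\ref{item_hypdeux} du lemme~\ref{lemmeB} étant vérifiée au rang \(n\) grâce à l'identité du lemme~\ref{lemmeC}. Vous explicitez simplement davantage que le texte le contrôle terme à terme du membre de droite de cette identité (termes de bord par \((-1)^n\)-symétrie, somme intermédiaire par les deux volets de l'hypothèse de récurrence aux rangs \(i\) et \(n-i\)), ce qui est un utile complément mais ne change pas l'argument.
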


\begin{proof} 
On raisonne par récurrence sur \(n\). Le cas \(n=1\) est montré au lemme~\ref{lemmeA}. Si la propriété est vraie pour tout \(1 \leq i \leq n\),  le lemme \ref{lemmeC} montre alors que pour tous \(f, g\in \AlgQJmod\) on a \(\accol{f\Eisen_1}{g}{n}-\accol{f}{g\Eisen_1}{n}\in\AlgQJmod\).  On conclut avec le lemme \ref{lemmeB} que,  \(\accol{f}{g}{n+1}\in\AlgQJmod\) et \(\accol{f}{\Eisen_1}{n+1}\in\AlgQJmod\) pour tous \(f, g \in \AlgQJmod\). 
 \end{proof}

On a ainsi démontré que :
\begin{thm}\label{TransB}
La suite \((\dfrac 1{n!}\accol{}{}{n})_{n\geq 0}\) est une déformation formelle de \(\AlgQJmod\).
\end{thm}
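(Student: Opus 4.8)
The plan is to obtain Theorem~\ref{TransB} as a formal consequence of the stability property accumulated in Lemmas~\ref{lemmeA}--\ref{lemmeD} together with point~\ref{item_schubert} of Proposition~\ref{TransQ}. The guiding principle is elementary: a formal deformation of an algebra restricts, with no additional work, to any subalgebra that is stable under all of its structure maps; all the genuine content therefore sits in the stability statement, which has already been proved.

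First I would recall from Proposition~\ref{TransQ}\ref{item_schubert} that the star-product \(f\star g=\sum_{n\geq 0}\tfrac1{n!}\accol{f}{g}{n}\planck^n\) is associative on \(\AlgQJS[[\planck]]\) and reduces modulo \(\planck\) to the ordinary product of \(\AlgQJS\); equivalently, the transvectants satisfy the family of identities~\eqref{asstrans}. Next I would invoke Lemma~\ref{lemmeD}: for every \(n\geq 1\) and all \(f,g\in\AlgQJmod\) one has \(\accol{f}{g}{n}\in\AlgQJmod\), while \(\accol{f}{g}{0}=fg\in\AlgQJmod\) trivially. Hence each bilinear map \(\tfrac1{n!}\accol{}{}{n}\) restricts to a well-defined map \(\AlgQJmod\times\AlgQJmod\to\AlgQJmod\), so the formula for \(\star\) defines a \(\CC[[\planck]]\)-bilinear product on the \(\CC[[\planck]]\)-submodule \(\AlgQJmod[[\planck]]\subseteq\AlgQJS[[\planck]]\).

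It then remains only to observe that associativity is inherited: the identity \((f\star g)\star h=f\star(g\star h)\), valid for all elements of \(\AlgQJS[[\planck]]\), holds in particular for \(f,g,h\in\AlgQJmod[[\planck]]\), and the intermediate products \(f\star g\) and \(g\star h\) remain in \(\AlgQJmod[[\planck]]\) by the stability just recalled. Since \(\star\) also reduces modulo \(\planck\) to the product of \(\AlgQJmod\), the sequence \((\tfrac1{n!}\accol{}{}{n})_{n\geq 0}\) is a formal deformation of \(\AlgQJmod\) in the sense of \cite[\texten{Chapter}~13]{zbMATH06054532}, which is the assertion of the theorem.

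I do not expect a real obstacle at this final stage: the substance lies upstream, in the passage from the Poisson bracket \(\accol{}{}{1}\) to every transvectant \(\accol{}{}{n}\), which is exactly the inductive argument of Lemmas~\ref{lemmeB}--\ref{lemmeD} (resting on the recursion~\eqref{rectrans}, on the auxiliary derivation \(d=\dtau+\tfrac14\Eisen_1\dz\) of Lemma~\ref{lemmeA}, and on \eqref{asstrans} as exploited in Lemma~\ref{lemmeC}). The only point worth flagging is conceptual rather than technical: one must use the full star-product on \(\AlgQJS\), not merely the first-order bracket, to secure associativity at every order in \(\planck\) — which is why the correct input is Proposition~\ref{TransQ}\ref{item_schubert} rather than a direct computation.
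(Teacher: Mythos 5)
Your proposal is correct and follows exactly the paper's own route: the paper likewise deduces Theorem~\ref{TransB} immediately from Lemma~\ref{lemmeD} (stability of \(\AlgQJmod\) under every transvectant) combined with point~\ref{item_schubert} of Proposition~\ref{TransQ}, the restriction of a formal deformation to a stable subalgebra being automatic. Your additional remarks merely make explicit what the paper leaves implicit.
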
 
\begin{proof}
Résulte immédiatement du lemme ci-dessus et du point~\ref{item_schubert} de la proposition \ref{TransQ}.
\end{proof}

\begin{rem}
Les sous-algèbres \(\AlgQJell\) et \(\AlgJS\) ne sont pas stables par \(\left(\accol{}{}{n}\right)_{n\geq0}\) puisque par exemple \(\{\eisen_4,\wp\}_1\notin\AlgQJell\) d'après \eqref{eq_ramae4sanse6}. Les crochets \(\accol{}{}{n}\) s'annulent sur \(\mathcal{\AlgM}\) pour tout \(n\geq 1\). La structure de Poisson sur \(\AlgQJmod\) définie par le crochet \(\accol{}{}{1}\)  est étudiée dans \cite{zhou}. On résume la situation page~\pageref{fig_recap}
\end{rem}

\begin{rem}
 Avec le point~\ref{item_beethoven} de la proposition \ref{TransQ}, le théorème \ref{TransB} permet de construire, à partir de deux formes quasi-Jacobi de type quasimodulaire de poids respectifs \(k\) et \(\ell\), une nouvelle forme dans \(\AlgQJmod\) de poids \(k+\ell+3n\), pour tout \(n \geq 0\). C'est un processus comparable à celui obtenu aux sections \ref{RCquasiell} et \ref{RCsing} avec les crochets de Rankin-Cohen sur les formes quasi-Jacobi de type quasielliptique et sur les formes elliptiques, l'augmentation du poids étant dans ces cas de \(2n\).
\end{rem}

\Ajout{
\begin{rem}
L'objectif originel de cette étude est la construction de déformations formelles sur une algèbre contenant les fonctions elliptiques obtenues à partir de la fonction de Weierstra\ss{} et sa dérivée. C'est ce qui nous a motivé à introduire l'algèbre \(\CC[\wp,\dz\wp,\eisen_4]\) puis \(\CC[\wp,\dz\wp,\eisen_4,\Eisen_1,\eisen_2]\)  pour gagner la stabilité par dérivation. Il est alors naturel de considérer les algèbres que nous avons appelées \emph{de type quasimodulaire} et \emph{de type elliptique}. Un changement de contexte consiste à prendre pour point de départ la notion de forme de Jacobi d'indice non nécessairement nul, éventuellement sur un sous-groupe du groupe de Jacobi puis de chercher à construire des déformations formelles dans ce contexte. Ce travail reste à faire. 
\end{rem}
}

\vspace{\fill}
\pagebreak[0]
\vspace{-\fill}

\appendix

\section{Stabilité des différentes algèbres par les différents crochets}

\begin{tikzpicture}
\node[rotate=-90] at (0,0) {%
\begin{minipage}{0.8\textheight}
\scalebox{0.84}{
\begin{tabularx}{\textwidth}{CCCCCCCCCCC}
&&&& \multicolumn{3}{c}{\(\boxed{\AlgQJS}\)} &&&&\\
&&&& \rose{\tikzmark{a}{\(\left(\crochet{}{}{n}\right)_{n\geq0}\)}} & \rose{\tikzmark{c}{\(\left(\Crochet{}{}{n}\right)_{n\geq0}\)}}& \bleu{\tikzmark{i}{\(\left(\accol{}{}{n}\right)_{n\geq0}\)}} &&&&\\
&&&&&&&&&&\\[10ex]
\rose{\(\left(\Crochet{}{}{n}\right)_{n\geq0}\)}  & \bleu{\(\left(\accol{}{}{n}\right)_{n\geq0}\)} & \rose{\tikzmark{b}{\(\left(\crochet{}{}{n}\right)_{n\geq0}\)}} & \phantom{xxxxxx} & \rose{\(\left(\crochet{}{}{n}\right)_{n\geq0}\)} & \bleu{\(\left(\accol{}{}{n}\right)_{n\geq0}\)} & \rose{\tikzmark{d}{\(\left(\Crochet{}{}{n}\right)_{n\geq0}\)}} & \phantom{xxxxxx} & \rose{\tikzmark{e}{\(\left(\Crochet{}{}{n}\right)_{n\geq0}\)}} & \bleu{\tikzmark{h}{\(\left(\accol{}{}{n}\right)_{n\geq0}\)}} & \rose{\(\left(\crochet{}{}{n}\right)_{n\geq0}\)}\\
\multicolumn{3}{c}{\(\boxed{\AlgQJell}\)} && \multicolumn{3}{c}{\(\boxed{\AlgJS}\)} && \multicolumn{3}{c}{\(\quad\boxed{\AlgQJmod}\)} \\
&&&&&&&&&&\\[10ex]
&&&&& \rose{\tikzmark{g}{\(\left(\crochet{}{}{n}\right)_{n\geq0}\)}} & \bleu{\tikzmark{j}{\(\left(0\right)_{n\geq 1}\)}} &&&&\\
&&&&&  \multicolumn{2}{c}{\(\boxed{\AlgM}\)} &&&&
\end{tabularx}
\MeasureLastTable{11} 
\rose{\CrossOut{31}}
\bleu{\CrossOut{32}}
\rose{\CrossOut{35}}
\bleu{\CrossOut{36}}
\rose{\CrossOut{41}}
\label{fig_recap}
\rose{\link{b}{a}}
\rose{\link{g}{b}}
\rose{\link{g}{d}}
\rose{\link{d}{c}}
\rose{\link{d}{e}}
\bleu{\link{h}{i}}
\bleu{\link{j}{h}}
}
\vspace*{10ex}

Les flèches indiquent des prolongements ; lorsqu'un crochet est barré, cela signifie qu'il ne stabilise pas l'algèbre.
\end{minipage}
};
\end{tikzpicture}

\FloatBarrier

\newpage

\section{Dimensions des sous espaces des formes quasi-Jacobi d'indice nul}
Si \(k\) est un entier, on définit \(\pa(k)=\frac{1+(-1)^k}{2}\) et \(\ia(k)=\frac{1-(-1)^k}{2}\).
\begin{thm}\label{thm_dimsousev}
Soit \(k\geq 0\) un entier. Les dimensions \(\fds(k)\) de \(\JS{k}\), \(\fdell(k)\) de \(\JSell{k}\), \(\fdmod(k)\) de \(\JSmod{k}\) et \(\fdq(k)\) de \(\QJSpoids{k}\) sont données par
\begin{align*}
\fds(k) &= \frac{107}{288}+\frac{3}{16}k+\frac{1}{48}k^2+\frac{9}{32}(-1)^k+\frac{1}{16}(-1)^kk+\frac{1}{8}\left(\pa(k)+\ia(k)\ic\right)\ic^k+\frac{1}{9}\left(\jc^k+\jc^{2k}\right)\\
\fdell(k) &= \frac{175}{288}+\frac{15}{32}k+\frac{5}{48}k^2+\frac{1}{144}k^3+\frac{5}{32}(-1)^k+\frac{1}{32}(-1)^kk+\frac{1}{8}\pa(k)\ic^k+\frac{1}{27}(1-\jc)\jc^k +\frac{1}{27}(2+\jc)\jc^{2k}\\
\fdmod(k) &=\begin{multlined}[t]\frac{121}{288}+\frac{55}{192}k+\frac{11}{192}k^2+\frac{1}{288}k^3+\frac{13}{32}(-1)^k+\frac{11}{64}(-1)^kk+\frac{1}{64}(-1)^kk^2+\frac{1}{16}\left(\pa(k)+\ia(k)\ic\right)\ic^k\\+\frac{1}{27}(2+\jc)\jc^k +\frac{1}{27}(1-\jc)\jc^{2k}\end{multlined}\\
\fdq(k) &=\begin{multlined}[t]\frac{4267}{6912}+\frac{55}{96}k+\frac{199}{1152}k^2+\frac{1}{48}k^3+\frac{1}{1152}k^4+\frac{ 63}{256}(-1)^k+\frac{3}{32}(-1)^kk+\frac{1}{128}(-1)^kk^2\\
+\frac{1}{16}\pa(k)\ic^k+\frac{1}{27}\left(\jc^k+\jc^{2k}\right)
\end{multlined}
\end{align*}
où \(\jc=\exp(2\ic\pi/3)\). 
\end{thm}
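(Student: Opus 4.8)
The plan is to reduce everything to elementary generating-function manipulation. By Theorem~\ref{prop_strucJS}, Theorem~\ref{thm_strucQJ} and Lemma~\ref{lem_indepalgcinq}, each of the four algebras is a \emph{free} polynomial algebra on weighted generators: $\AlgJS$ on generators of weights $2,3,4$; $\AlgQJell$ on weights $1,2,3,4$; $\AlgQJmod$ on weights $2,2,3,4$; and $\AlgQJS$ on weights $1,2,2,3,4$. Hence each Hilbert series is the corresponding product of geometric series, so
\[
\sum_{k\geq 0}\fds(k)z^k=\frac{1}{(1-z^2)(1-z^3)(1-z^4)},\qquad \sum_{k\geq 0}\fdell(k)z^k=\frac{1}{(1-z)(1-z^2)(1-z^3)(1-z^4)},
\]
\[
\sum_{k\geq 0}\fdmod(k)z^k=\frac{1}{(1-z^2)^2(1-z^3)(1-z^4)},\qquad\sum_{k\geq 0}\fdq(k)z^k=\frac{1}{(1-z)(1-z^2)^2(1-z^3)(1-z^4)}.
\]
The first of these was already recorded in Proposition~\ref{prop_dimJS}. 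The theorem then follows from the partial fraction decomposition of each of these rational functions over $\CC$, combined with the expansion $[z^k](1-\zeta z)^{-m}=\binom{k+m-1}{m-1}\zeta^k$.

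Concretely, I would factor each denominator into linear factors $1-\zeta z$ indexed by the roots of unity $\zeta\in\{1,-1,\ic,-\ic,\jc,\jc^2\}$, reading the pole orders off the exponents: for $\fds$, order $3$ at $z=1$, order $2$ at $z=-1$, and simple poles at $\pm\ic$, $\jc$, $\jc^2$; passing to $\fdell$ and $\fdmod$ raises the order at $z=1$ (and, for $\fdmod$, also at $z=-1$) by one, and for $\fdq$ the order at $z=1$ goes up by two more. A pole of order $m$ at $z=1/\zeta$ contributes $\bigl(\text{a polynomial of degree }m-1\text{ in }k\bigr)\,\zeta^k$ to the $z^k$-coefficient, so $\fds(k)$ is a degree-$2$ polynomial in $k$, plus $(-1)^k$ times a degree-$1$ polynomial, plus multiples of $\ic^k,(-\ic)^k,\jc^k,\jc^{2k}$, with the polynomial degrees bumped up accordingly for $\fdell,\fdmod,\fdq$. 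Collecting the conjugate pairs $\{\ic,-\ic\}$ and $\{\jc,\jc^2\}$ of simple poles into real-valued period-$4$ and period-$3$ functions (which is precisely what the compact expressions $\tfrac18(\pa(k)+\ia(k)\ic)\ic^k$, $\tfrac1{16}\pa(k)\ic^k$, $\tfrac19(\jc^k+\jc^{2k})$ and $\tfrac1{27}(1-\jc)\jc^k+\tfrac1{27}(2+\jc)\jc^{2k}$ in the statement encode) produces the asserted closed forms. A labour-saving remark is that the four series are nested, since $\frac1{1-z}$ and $\frac1{1-z^2}$ act on a generating function as partial-sum operators: $\fdell(k)=\sum_{j=0}^{k}\fds(j)$, $\fdmod(k)=\sum_{0\leq 2i\leq k}\fds(k-2i)$ and $\fdq(k)=\sum_{j=0}^{k}\fdmod(j)$. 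Thus it suffices to establish the formula for $\fds$ and then sum the quasi-polynomial three times, each summation raising the degree by one while preserving the set of periods.

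The main obstacle is purely computational bookkeeping: the residue at $z=1$, of order $3$ for $\fds$ up to order $5$ for $\fdq$, requires differentiating the remaining rational factor up to four times and carrying out exact rational arithmetic, and the signs in the conjugate-pair combinations must be tracked carefully; there is no conceptual difficulty. To guard against slips I would verify each closed form against the first several values of its sequence, for instance the values $\fds(0),\dots,\fds(12)=1,0,1,2,3,4,5,7$ already tabulated in Proposition~\ref{prop_dimJS}, together with the corresponding initial segments of $\fdell$, $\fdmod$ and $\fdq$ obtained directly by counting weighted monomials; these finitely many constraints pin down the residues uniquely.
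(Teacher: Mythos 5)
Your proposal is correct and follows essentially the same route as the paper: derive the four Hilbert series from the weighted free generators, decompose into partial fractions over the sixth roots of unity to see that each dimension is a quasi-polynomial with polynomial parts of degree bounded by the pole orders, and then pin down the finitely many coefficients by matching initial values of the series (the paper does exactly this, citing Stanley and using \texttt{PARI/GP} for the arithmetic). Your additional observation that the four sequences are nested partial sums of one another is a nice shortcut but not a departure from the paper's method.
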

\begin{proof}
Avec le même argument qu'en la proposition~\ref{prop_dimJS}, les séries génératrices des dimensions sont
\begin{align*}
\sum_{k\in\N}\fds(k)\cdot z^k&=\frac{1}{(1-z^2)(1-z^3)(1-z^4)},\\
\sum_{k\in\N}\fdell(k)\cdot z^k&=\frac{1}{(1-z)(1-z^2)(1-z^3)(1-z^4)},\\
\sum_{k\in\N}\fdmod(k)\cdot z^k&=\frac{1}{(1-z^2)^2(1-z^3)(1-z^4)}
\shortintertext{et}
\sum_{k\in\N}\fdq(k)\cdot z^k&=\frac{1}{(1-z)(1-z^2)^2(1-z^3)(1-z^4)}. 
\end{align*}
La décomposition en éléments simples des membres de droites permet de justifier que les dimensions sont de la forme
\[
P_1(k)+P_{-1}(k)(-1)^k+P_{\ic}(k)\ic^k+P_{-\ic}(k)(-\ic)^k+P_{\jc}(k)\jc^k+P_{\jc^2}(k)\jc^{2k}
\]
où les \(P_\xi\) sont des polynômes de degré strictement majoré par la valuation de \(z-\xi\) dans le dénominateur de la fonction génératrice (voir par exemple~\cite[\texten{Theorem}~ 4.4.1]{zbMATH06016068}). Ces polynômes sont facilement déterminés par le début du développement des séries génératrices. On a utilisé \texttt{PARI/GP} pour nos calculs~\cite{PARI2}. 
\end{proof}

À partir des formules du théorème~\ref{thm_dimsousev}, on peut avoir des formules polynomiales à coefficients rationnels dans chaque classe du poids modulo \(12\). De telles formules permettent d'obtenir des expressions \og compactes~\fg{} des dimensions semblables à l'égalité~\eqref{eq_dimexplicit} de la proposition~\ref{prop_dimJS}, par exemple
\[
\fds^{0,\infty}(k)=\projZ*{
\dfrac{1}{144}
\left(k^3+15k^2+\begin{cases}72k+144 & \text{si \(k\) est pair}\\ 63k+65 & \text{sinon}\end{cases}\right)
}.
\]
Une telle formule est cependant assez artificielle, notamment parce qu'elle n'est pas unique dans sa forme. On a aussi par exemple
\[
\fds^{0,\infty}(k)=\projZ*{
\frac{k+3}{144}\begin{cases}(k+6)^2 & \text{si \(k\) est pair}\\ (k+3)(k+9) & \text{sinon}\end{cases}
}.
\]


\bibliographystyle{plain-fr}
\bibliography{2024_DuMaRo}

\begin{thebibliography}{10}
\expandafter\ifx\csname fonteauteurs\endcsname\relax
\def\fonteauteurs{\scshape}\fi

\bibitem{zbMATH03646988}
George~E. \bgroup\fonteauteurs\bgroup Andrews\egroup\egroup{} :
\newblock A note on partitions and triangles with integer sides.
\newblock {\em Am. Math. Mon.}, 86\string:\penalty500\relax 477--478, 1979.

\bibitem{zbMATH05156388}
Pierre \bgroup\fonteauteurs\bgroup Bieliavsky\egroup\egroup{}, Xiang
  \bgroup\fonteauteurs\bgroup Tang\egroup\egroup{} et Yijun
  \bgroup\fonteauteurs\bgroup Yao\egroup\egroup{} :
\newblock Rankin-{Cohen} brackets and formal quantization.
\newblock {\em Adv. Math.}, 212(1)\string:\penalty500\relax 293--314, 2007.

\bibitem{zbMATH06071095}
Donald~J. \bgroup\fonteauteurs\bgroup Bindner\egroup\egroup{} et Martin
  \bgroup\fonteauteurs\bgroup Erickson\egroup\egroup{} :
\newblock Alcuin's sequence.
\newblock {\em Am. Math. Mon.}, 119(2)\string:\penalty500\relax 115--121, 2012.

\bibitem{zbMATH06696479}
Pierre \bgroup\fonteauteurs\bgroup Charollois\egroup\egroup{} et Robert
  \bgroup\fonteauteurs\bgroup Sczech\egroup\egroup{} :
\newblock Elliptic functions according to {Eisenstein} and {Kronecker}: an
  update.
\newblock {\em Eur. Math. Soc. Newsl.}, 101\string:\penalty500\relax 8--14,
  2016.

\bibitem{hal03132764}
Youngju \bgroup\fonteauteurs\bgroup Choie\egroup\egroup{}, Fran{\c c}ois
  \bgroup\fonteauteurs\bgroup Dumas\egroup\egroup{}, Fran{\c c}ois
  \bgroup\fonteauteurs\bgroup Martin\egroup\egroup{} et Emmanuel
  \bgroup\fonteauteurs\bgroup Royer\egroup\egroup{} :
\newblock {A derivation on Jacobi forms: Oberdieck derivation}.
\newblock Disponible sur le serveur d'archive Hal :
  \url{https://hal.science/hal-03132764}, f\'evrier 2021.

\bibitem{zbMATH07362171}
YoungJu \bgroup\fonteauteurs\bgroup Choie\egroup\egroup{}, Fran{\c{c}}ois
  \bgroup\fonteauteurs\bgroup Dumas\egroup\egroup{}, Fran{\c{c}}ois
  \bgroup\fonteauteurs\bgroup Martin\egroup\egroup{} et Emmanuel
  \bgroup\fonteauteurs\bgroup Royer\egroup\egroup{} :
\newblock Formal deformations of the algebra of {Jacobi} forms and
  {Rankin}-{Cohen} brackets.
\newblock {\em C. R., Math., Acad. Sci. Paris}, 359(4)\string:\penalty500\relax
  505--521, 2021.

\bibitem{MR0781735}
Martin \bgroup\fonteauteurs\bgroup Eichler\egroup\egroup{} et Don
  \bgroup\fonteauteurs\bgroup Zagier\egroup\egroup{} :
\newblock {\em The theory of {J}acobi forms}, volume~55 de {\em Progress in
  Mathematics}.
\newblock Birkh\"{a}user Boston, Inc., Boston, MA, 1985.

\bibitem{Fogliasso}
Jack \bgroup\fonteauteurs\bgroup Fogliasso\egroup\egroup{} :
\newblock {Partial derivatives of Jacobi forms}.
\newblock 35th Automorphic Forms Workshop Louisiana State University, 2023.

\bibitem{zbMATH05500775}
Eberhard \bgroup\fonteauteurs\bgroup Freitag\egroup\egroup{} et Rolf
  \bgroup\fonteauteurs\bgroup Busam\egroup\egroup{} :
\newblock {\em Complex analysis.}
\newblock Universitext. Berlin: Springer, 2009.
\newblock 2nd ed.

\bibitem{AlcuinSeq}
OEIS~Foundation \bgroup\fonteauteurs\bgroup Inc.\egroup\egroup{} :
\newblock Alcuin's sequence: \verb|expansion of x^3/((1-x^2)*(1-x^3)*(1-x^4))|.
\newblock Entry A005044 in The On-Line Encyclopedia of Integer Sequences, 2024.
\newblock \url{https://oeis.org/A005044}.

\bibitem{zbMATH03654914}
J.~H. \bgroup\fonteauteurs\bgroup Jordan\egroup\egroup{}, Ray
  \bgroup\fonteauteurs\bgroup Walch\egroup\egroup{} et R.~J.
  \bgroup\fonteauteurs\bgroup Wisner\egroup\egroup{} :
\newblock Triangles with integer sides.
\newblock {\em Am. Math. Mon.}, 86\string:\penalty500\relax 686--689, 1979.

\bibitem{zbMATH06054532}
Camille \bgroup\fonteauteurs\bgroup Laurent-Gengoux\egroup\egroup{}, Anne
  \bgroup\fonteauteurs\bgroup Pichereau\egroup\egroup{} et Pol
  \bgroup\fonteauteurs\bgroup Vanhaecke\egroup\egroup{} :
\newblock {\em Poisson structures}, volume 347 de {\em Grundlehren Math. Wiss.}
\newblock Berlin: Springer, 2012.

\bibitem{zbMATH05953688}
Anatoly \bgroup\fonteauteurs\bgroup Libgober\egroup\egroup{} :
\newblock Elliptic genera, real algebraic varieties and quasi-{Jacobi} forms.
\newblock \emph{In} {\em Topology of stratified spaces. Based on lectures given
  at the workshop, Berkeley, CA, USA, September 8--12, 2008}, pages 95--120.
  Cambridge: Cambridge University Press, 2011.

\bibitem{zbMATH05050117}
Fran{\c{c}}ois \bgroup\fonteauteurs\bgroup Martin\egroup\egroup{} et Emmanuel
  \bgroup\fonteauteurs\bgroup Royer\egroup\egroup{} :
\newblock Modular forms and periods.
\newblock \emph{In} {\em Formes modulaires et transcendance. Colloque jeunes},
  pages 1--117. Paris: Soci{\'e}t{\'e} Math{\'e}matique de France, 2005.

\bibitem{oberdieck2014serrederivativeweightjacobi}
Georg \bgroup\fonteauteurs\bgroup Oberdieck\egroup\egroup{} :
\newblock A {S}erre derivative for even weight {J}acobi {F}orms, 2014.
\newblock {https://arxiv.org/abs/1209.5628}.

\bibitem{zbMATH06346312}
Ren{\'e} \bgroup\fonteauteurs\bgroup Olivetto\egroup\egroup{} :
\newblock On the {Fourier} coefficients of meromorphic {Jacobi} forms.
\newblock {\em Int. J. Number Theory}, 10(6)\string:\penalty500\relax
  1519--1540, 2014.

\bibitem{zbMATH01516969}
Peter~J. \bgroup\fonteauteurs\bgroup Olver\egroup\egroup{} :
\newblock {\em Classical invariant theory}, volume~44 de {\em Lond. Math. Soc.
  Stud. Texts}.
\newblock Cambridge: Cambridge University Press, 1999.

\bibitem{zbMATH06128504}
Emmanuel \bgroup\fonteauteurs\bgroup Royer\egroup\egroup{} :
\newblock Quasimodular forms: an introduction.
\newblock {\em Ann. Math. Blaise Pascal}, 19(2)\string:\penalty500\relax
  297--306, 2012.

\bibitem{MR0498338}
Jean-Pierre \bgroup\fonteauteurs\bgroup Serre\egroup\egroup{} :
\newblock {\em Cours d'arithm\'{e}tique}, volume No. 2 de {\em Le
  Math\'{e}maticien}.
\newblock Presses Universitaires de France, Paris, 1977.
\newblock Deuxi\`eme \'{e}dition revue et corrig\'{e}e.

\bibitem{zbMATH06016068}
Richard~P. \bgroup\fonteauteurs\bgroup Stanley\egroup\egroup{} :
\newblock {\em Enumerative combinatorics. {Vol}. 1.}, volume~49 de {\em Camb.
  Stud. Adv. Math.}
\newblock Cambridge: Cambridge University Press, 2nd ed. \'edition, 2012.

\bibitem{PARI2}
{The PARI~Group}, Univ. Bordeaux.
\newblock {\em {PARI/GP version \texttt{2.17.0}}}, 2024.
\newblock available from \url{http://pari.math.u-bordeaux.fr/}.

\bibitem{MR4281261}
Jan-Willem van \bgroup\fonteauteurs\bgroup Ittersum\egroup\egroup{}, Georg
  \bgroup\fonteauteurs\bgroup Oberdieck\egroup\egroup{} et Aaron
  \bgroup\fonteauteurs\bgroup Pixton\egroup\egroup{} :
\newblock Gromov-{W}itten theory of {K}3 surfaces and a {K}aneko-{Z}agier
  equation for {J}acobi forms.
\newblock {\em Selecta Math. (N.S.)}, 27(4)\string:\penalty500\relax Paper No.
  64, 30, 2021.

\bibitem{zbMATH07634708}
Jan-Willem~M. van \bgroup\fonteauteurs\bgroup Ittersum\egroup\egroup{} :
\newblock The {Bloch}-{Okounkov} theorem for congruence subgroups and {Taylor}
  coefficients of quasi-{Jacobi} forms.
\newblock {\em Res. Math. Sci.}, 10(1)\string:\penalty500\relax 45, 2023.
\newblock Id/No 5.

\bibitem{zbMATH01236956}
Andr{\'e} \bgroup\fonteauteurs\bgroup Weil\egroup\egroup{} :
\newblock {\em Elliptic functions according to {Eisenstein} and {Kronecker}.}
\newblock Class. Math. Berlin: Springer, 1999.
\newblock Reprint of the 1976 edition.

\bibitem{MR1280058}
Don \bgroup\fonteauteurs\bgroup Zagier\egroup\egroup{} :
\newblock Modular forms and differential operators.
\newblock volume 104, pages 57--75. 1994.
\newblock K. G. Ramanathan memorial issue.

\bibitem{zbMATH05808162}
Don \bgroup\fonteauteurs\bgroup Zagier\egroup\egroup{} :
\newblock Elliptic modular forms and their applications.
\newblock \emph{In} {\em The 1-2-3 of modular forms. Lectures at a summer
  school in Nordfjordeid, Norway, June 2004}, pages 1--103. Berlin: Springer,
  2008.

\bibitem{zhou}
Jie \bgroup\fonteauteurs\bgroup Zhou\egroup\egroup{} :
\newblock {Poisson structures in theories of modular forms, elliptic functions,
  and invariant theory}.
\newblock Dynamics in Siberia 2017 Conference,
  \url{http://old.math.nsc.ru/conference/ds/2017/talks/talk_by_Zhou.pdf}.

\end{thebibliography}
\end{document}